\def\clap#1{\hbox to 0pt{\hss#1\hss}}
\newcommand{\n}[2]{\frac{\numprint{#1}}{\numprint{#2}}}
\newcommand{\m}{\mu}
\renewcommand{\l}{\lambda}
\newcommand{\sub}{\subseteq}
\newcommand{\ds}{\displaystyle}
\newcommand{\ts}{\textstyle}
\newcommand{\s}{\mspace{-1mu}}
\newcommand{\N}{\mathbb{N}}
\newcommand{\R}{\mathbb{R}}
\newcommand{\C}{\mathbb{C}}
\renewcommand{\S}{\mathcal{S}}
\renewcommand{\dots}{.\,.\,.\,}
\renewcommand{\cdots}{\cdot \,\cdot \,\cdot \,}
\newcommand{\norm}[2]{\lVert #1 \rVert_{#2}}
\newcommand{\abs}[1]{\lvert #1 \rvert}
\newcommand{\bigabs}[1]{\bigl\lvert #1 \bigr\rvert}
\renewcommand{\s}{\operatorname{s}}
\renewcommand{\c}{\operatorname{c}}
\DeclareMathOperator{\e}{e}
\newtheorem{theo}{Theorem}[section]
\newtheorem{lem}[theo]{Lemma}
\newtheorem{cor}[theo]{Corollary}
\newtheorem{pro}[theo]{Proposition}
\theoremstyle{definition}
\newtheorem{defi}[theo]{Definition}
\newtheorem{exa}[theo]{Example}
\newtheorem{rem}[theo]{Remark}
\newtheorem{conj}{Conjecture}
\newtheorem{rem2}[conj]{Remark}
\title{Measure Theoretic Trigonometric Functions}
\author{Peter Arzt}
\begin{document}
\maketitle

%\section*{Abstract}

\begin{abstract}
We study the eigenvalues and eigenfunctions of the Laplacian $\Delta_{\m}=\frac{d}{d\m}\frac{d}{dx}$ for a Borel probability measure $\m$ on the interval $[0,1]$ by a technique that follows the treatment of the classical eigenvalue equation $f'' = -\l f$ with homogeneous Neumann or Dirichlet boundary conditions. For this purpose we introduce generalized trigonometric functions that depend on the measure $\m$. 
In particular, we consider the special case where $\m$ is a self-similar measure like e.g. the Cantor measure. We develop certain trigonometric identities that generalize the addition theorems for the sine and cosine functions. In certain cases we get information about the growth of the suprema of normalized eigenfunctions. For several special examples of $\m$ we compute eigenvalues of $\Delta_{\m}$ and $L_{\infty}$- and $L_2$-norms of eigenfunctions numerically by applying the formulas we developed.  
\end{abstract}

\section{Introduction}

Assume that $\m$ is a Borel probability measure on the interval $[0,1]$.  We consider the  Laplacian $\Delta_{\m}$ on $[0,1]$ for the measure $\m$ and study the eigenvalue problem
\[ \Delta_{\m} f = -\l f\]
with either homogeneous Dirichlet boundary conditions
\[ f(a)=f(b)=0,\]
or homogeneous Neumann boundary conditions
\[ f'(a)=f'(b)=0.\]

The definition of $\Delta_{\m}$ involves the derivative with respect to the measure $\m$. If a function $g\colon [0,1]\to \R$ allows the representation 
\begin{equation}\label{intromder} 
g(x) = g(a) + \int_{[a,x]} \frac{dg}{d\m}\, d\m
\end{equation}
for all $x\in [0,1]$, then $\frac{dg}{d\m}$ is unique in $L_2(\m)$ and is called the \emph{$\m$-derivative} of $g$. In Freiberg \cite{freiberg:analytical} an analytic calculus of
the concept of $\m$ derivatives is developed.

The operator $\Delta_{\m}$ is then given by 
\[ \Delta_{\m}f = \frac{d}{d\m} f' \]
for all $f\in L_2(\m)$ for which $f'$ and the $\m$-derivative of $f'$ exist.

It is well known that if $\m$ is a non-atomic Borel measure, $\Delta_{\m}$ has a pure point spectrum consisting only of eigenvalues with multiplicity one, that accumulate at infinity, 
see Freiberg \cite{freiberg:analytical}*{Lem. 5.1 and Cor. 6.9} or Bird, Ngai and Teplyaev \cite{bird:laplacians}*{Th. 2.5}. Moreover, we have a pure point spectrum not only in the non-atomic case, see Vladimirov and Sheipak \cite{vladimirov:singular}.

This operator and the resulting eigenvalue problem  has been studied in numerous papers, for example in
 Feller \cite{feller:second},
 McKean and Ray \cite{mckean:spectral},
 Kac and Krein \cite{kac:spectral},
 Fujita \cite{fujita:diffusion}, 
 Naimark and Solomyak \cite{naimark:eigenvalue},
 Freiberg and Zähle \cite{freiberg:harmonic},
 Bird, Ngai and Teplyaev \cite{bird:laplacians},
 Freiberg \cites{freiberg:analytical, freiberg:asymptotics, freiberg:pruefer,  freiberg:dirichlet},
 Freiberg and Löbus \cite{freiberg:zeros},
 Hu, Lau and Ngai \cite{hu:laplace},
 Chen and Ngai \cite{chen:eigenvalues}, and
 Arzt and Freiberg \cite{arzt:asymptotics}.

In this paper we give a new technique of determining the eigenvalues and eigenfunctions of $\Delta_{\m}$ that involves a generalization of the sine and cosine functions.

In this we follow the classical case, where $\m$ is the Lebesgue measure. There, the Dirichlet eigenvalue problem reads
\begin{align*} f''=-\l f \\
f(0)=f(1)=0.
\end{align*}
Then, for every non-negative $\l$, $f(x)=\sin (\sqrt{\l}x)$ satisfies the equation as well as the boundary condition on the left-hand side. On the right-hand side,
the boundary condition is only met if $\sqrt{\l}$ is a zero point of the sine function, which are, indeed, very well known.

If we impose Neumann boundary conditions $f'(0)=f'(1)=0$, we take $f(x)=\cos (\sqrt{\l}x)$, because this complies automatically with the left-hand side condition.
The right-hand side condition again is satisfied if $\sqrt{\l}$ is a zero point of the sine function, which leads to the same eigenvalues as in the Dirichlet case (supplemented by zero). But here sine appears as the derivative of cosine, which will make a difference when we take more general measures.

Now let $\m$ be an arbitrary Borel probability measure on $[0,1]$. We construct functions $\s_{\l,\m}$ and $\c_{\l,\m}$ as a replacement for $\sin$ and $\cos$ by  generalizing the series
\[\sin(zx)=\sum_{n=0}^\infty (-1)^n \frac{(zx)^{2n+1}}{(2n+1)!}\]
and
\[\cos(zx)=\sum_{n=0}^\infty (-1)^n \frac{(zx)^{2n}}{(2n)!}.\]
There we replace $\frac{x^n}{n!}$ by appropriate functions $p_n(x)$ or $q_n(x)$, depending on whether we impose Neumann or Dirichlet boundary conditions.
These functions fulfill the eigenvalue equation and meet the left-hand side Dirichlet and Neumann boundary condition, respectively.

Putting $p_n:=p_n(1)$ and $q_n:=q_n(1)$, we define
\[ \sin_{\m}^D(z):= \sum_{n=0}^\infty (-1)^n q_{2n+1} z^{2n+1} \] 
and
\[ \sin_{\m}^N(z):= \sum_{n=0}^\infty (-1)^n p_{2n+1} z^{2n+1}. \] 
For $\s_{\l,\m}(z,\cdot)$ and $\c_{\l,\m}(z,\cdot)$ to also match the right-hand side conditions, $z$ has to be chosen as a zero point of $\sin_{\m}^D$ in the Dirichlet case and $\sin_{\m}^N$ in the Neumann case. All this is described in Section \ref{sect:trig}.

In Section \ref{sec:l2norm} we show how to compute the norms in $L_2(\m)$ of the eigenfunctions by using the sequences $p_n$ and $q_n$.

The functions $\c_{\l,\m}(z,\cdot)$ and $\s_{\l,\m}(z,\cdot)$ satisfy an identity that generalizes the classical trigonometric identity. This is established in Section 
\ref{sec:trigind}.

In Section \ref{sec:sym} we consider symmetric measures and get some symmetry results.

The main results are established in Section \ref{sec:ss}. We outline these briefly here. Since the functions $p_n(x)$ and $q_n(x)$ are determined in a process of iterative integration alternately with respect to $\m$ and the Lebesgue measure, the coefficients $p_n$ and $q_n$ are difficult to compute in general. But if $\m$ is a self-similar measure with respect to the mappings
\[ S_1(x) = r_1 x \qquad \text{and} \qquad S_2(x)= r_2 (x-1) + 1\]
as well as the weight factors $m_1$ and $m_2$, we develop a recursion formula for $p_n$ and $q_n$.

To illustrate the structure of this recursion formula, we consider again the classical Lebesgue case. There we have
\[ p_n=q_n=\frac{1}{n!}\]
which leads to $\sin_{\m}^N(z)=\sin_{\m}^D(z)=\sin(z)$. The sequence $p_n = \frac{1}{n!}$ can be viewed as the solution of 
the problem
\begin{equation}\label{intropn}
\begin{split}
2^n p_n & = \sum_{i=0}^n p_i\,p_{n-i},\\
p_0 & = p_1 = 1,
\end{split}
\end{equation}
which is derived from the equation $2^n = \sum\limits_{i=0}^n \binom{n}{i}$. Our recursion formula for self-similar $\m$ looks a little more involved, as it 
distinguishes between the two different kinds of boundary conditions. Additionally it is different for even and odd values of $n$, and it involves the
 parameters $r_1, r_2, m_1, m_2$ of the measure. However, it has the same basic structure as \eqref{intropn}.

Moreover, we establish functional equations involving $\sin_{\m}^N$ and $\sin_{\m}^D$ that can be viewed as generalizations of
the classical addition theorems.

In Section \ref{sec:r1m1gleich} we consider the especially  interesting case where $r_1m_1 = r_2m_2$. Then the  Neumann eigenvalues fulfill a renormalization formula
\[ \l_{2n} = R \,\l_n,\]
where $1/R = r_1 m_1$. This property has been established in a special case by Volkmer \cite{volkmer:eigenvalues} and in our setting by 
Freiberg \cite{freiberg:pruefer}. This formula allows us to investigate the growth of subsequences 
\[ \Bigl(\norm{\tilde{f}_{k2^n}}{\infty}\Bigr)_{n\in\N}, \qquad \text{for odd $k$},\]
where $\tilde{f}_n$ denotes an eigenfunction to the $n$th Neumann eigenvalue that is normalized to one in $L_2(\m)$.

We show in Section \ref{r1r2g1} that, if we assume $r_1+r_2=1$ in addition to  $r_1m_1 = r_2m_2$, the Dirichlet and Neumann eigenvalues coincide.

Finally, by using the formulas we developed in the course of our investigations, we compute approximations of eigenvalues for certain examples in Section \ref{numerical}.

Several remarks about possible further investigations are made in Section \ref{sec:rem}.

%\section{Acknowledgments}

\section{Derivatives and the Laplacian with respect to a measure}
\label{se:der}

As in Freiberg \cites{freiberg:analytical, freiberg:asymptotics}, we define a derivative of a function with respect to a measure.

\begin{defi}\label{def:derivative}
	Let $\m$ be a non-atomic Borel probability measure on $[0,1]$ and let $f\colon [0,1]\to \R$. A function $h\in  L_2([0,1],\m)$ is called the \emph{$\m$-derivative of $f$}, if
	\[f(x)=f(a)+\int_a^x h\,d\m \qquad \text{for all } x\in [0,1]. \]
\end{defi}

As can be easily seen, the $\m$-derivative in Definition \ref{def:derivative} is  unique in $L_2(\m)$. We denote the $\m$-derivative of a function $f$ by $\frac{df}{d\m}$. The $\l $-derivative $\frac{df}{d\l }$ we denote by $f'$, where $\l$ denotes the Lebesgue measure on $[0,1]$.

We define $H^1([0,1],\m)=H^1(\m)$ to be the space of all $L_2(\m)$-functions whose $\m$-derivative exists. 
According to our definition, if it exists, the $\m$-derivative is always in $L_2(\m)$, and thus it is clear that, for every non-atomic measure $\m$, all functions in $H^1(\m)$ are continuous.
In case $\m=\l$ is the Lebesgue measure, the definition of $H^1(\l)$ is equivalent to the usual definition of the Sobolev space $H^1=W_2^1$.

The following useful lemma is an analogue to integration by parts and can be found in Freiberg \cite{freiberg:analytical}*{Prop. $3.1$}.
\begin{lem}\label{parts}
	For $c,d \in [0,1]$ with $c<d$ and functions $f\in H^1(\m)$ and $g \in H^1(\l)$ we have
	\[ \int_c^d \frac{df}{d\m}(t)\, g(t) \, d\m(t) = f\,g\Big|_c^d - \int_c^d f(t)\, g'(t) \, dt.\]
\end{lem}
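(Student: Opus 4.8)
The plan is to reduce everything to the defining integral representations of the two derivatives and then swap the order of integration via Fubini's theorem. Write $F:=\frac{df}{d\m}\in L_2(\m)$, so that by Definition \ref{def:derivative} we have $f(t)=f(c)+\int_c^t F\,d\m$ for $t\in[c,d]$, and similarly $g(t)=g(c)+\int_c^t g'\,ds$ since $g\in H^1(\l)$. Starting from the left-hand side, I would substitute the representation of $g$:
\[
\int_c^d F(t)\,g(t)\,d\m(t) = g(c)\int_c^d F(t)\,d\m(t) + \int_c^d\!\!\int_c^t F(t)\,g'(s)\,ds\,d\m(t).
\]
The first summand equals $g(c)\bigl(f(d)-f(c)\bigr)$ directly from the representation of $f$.

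For the double integral, the key step is to interchange the integrations over the triangle $\{(s,t): c\le s\le t\le d\}$, rewriting it as $\int_c^d g'(s)\int_s^d F(t)\,d\m(t)\,ds$, and then to recognize the inner integral as $f(d)-f(s)$. This gives
\[
\int_c^d g'(s)\bigl(f(d)-f(s)\bigr)\,ds = f(d)\bigl(g(d)-g(c)\bigr) - \int_c^d f(s)\,g'(s)\,ds,
\]
again using the representation of $g$. Adding the two summands, the terms $g(c)f(d)$ cancel and one is left with $f(d)g(d)-f(c)g(c)-\int_c^d f\,g'\,ds$, which is exactly $f\,g\big|_c^d - \int_c^d f(t)g'(t)\,dt$.

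The only point requiring care, and hence the main obstacle, is the justification of Fubini's theorem: one must check that $(s,t)\mapsto F(t)\,g'(s)$ is integrable with respect to the product measure $\m\otimes\l$ on $[c,d]^2$ (and hence on the sub-triangle). This follows because $\m$ is a finite measure and $F\in L_2(\m)\subseteq L_1(\m)$, while $\l$ restricted to $[c,d]$ is finite and $g'\in L_2(\l)\subseteq L_1(\l)$; therefore the product function lies in $L_1(\m\otimes\l)$ by Tonelli's theorem, and Fubini's theorem applies. Everything else is a routine manipulation of the two integral representations, so once the integrability is in place the identity falls out immediately.
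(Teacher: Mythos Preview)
Your argument is correct: substituting the defining integral representations and applying Fubini over the triangle is exactly the natural proof, and your integrability check via $F\in L_1(\m)$, $g'\in L_1([c,d])$ is sufficient. Note, however, that the paper does not supply its own proof of this lemma; it merely cites Freiberg~\cite{freiberg:analytical}*{Prop.~3.1}. So your write-up actually goes beyond what the paper offers, and there is nothing to compare in terms of alternative strategies.
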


Let $\nu$ be another non-atomic Borel probability measure on $[0,1]$.
Then the space $H^2(\nu,\m)$ is defined to be the collection of all functions in $H^1(\nu)$ whose $\nu$-derivative belongs to $H^1(\m)$.

Now we define the operator $\Delta_{\m}$ on which our investigations are focused as
\[ \Delta_\m f := \frac{d}{d\m} f'\]
for all $f\in H^2(\l ,\m)$.

\begin{rem}
In Freiberg \cite{freiberg:analytical}*{Cor. 6.4} is shown that $H^2(\l ,\m)$ is dense in $L_2(\m)$. Furthermore, it is well known (see e.g. \cite{freiberg:analytical}*{Cor. 3.2}) that $\Delta_{\m}$ is a negative symmetric operator on $L_2(\m)$.
\end{rem}

\section{Generalized trigonometric functions}
\label{sect:trig}

Let $\m$ be an atomless Borel probability measure on $[0,1]$. We construct sequences of functions $p_n(x)$ and $q_n(x)$ depending on $\m$.
\begin{defi}\label{defpq}
For $x\in[0,1]$ we set $p_0(x)=q_0(x)=1$ and, for $n\in\N$,
\[p_n(x):=\begin{cases} \int_0^x p_{n-1}(t)\, d\m(t)		& \text{, if $n$ is odd,}\\
																					\int_0^x p_{n-1}(t)\, dt						& \text{, if $n$ is even,}
								\end{cases}\]

and
\[q_n(x):=\begin{cases} \int_0^x  q_{n-1}(t)\, dt					& \text{, if $n$ is odd,}\\
																					\int_0^x  q_{n-1}(t)\,d\m(t)		& \text{, if $n$ is even.}
								\end{cases}\]
\end{defi}

Then, for $n\in \N_0$, we have by definition  $p_{2n}, q_{2n+1} \in H^1(\l)$, $p_{2n+1}, q_{2n} \in H^1(\m)$ and
\[\frac{d}{d\m}p_{2n+1}	 =  p_{2n}, \quad	 q_{2n+1}'	 = q_{2n}, \quad p_{2n}'  =  p_{2n-1} \quad \text{and} \quad	 \frac{d}{d\m}q_{2n}		 = q_{2n-1}.\]

\begin{rem}
	\begin{enumerate}
	\item
	If we take $\m$ to be the Lebesgue measure, then
	\[p_n(x)=q_n(x)=\dfrac{x^n}{n!}.\]
	In the following, we will transfer classical concepts and techniques to a general measure $\m$ by replacing $\dfrac{x^n}{n!}$ by $p_n(x)$ or $q_n(x)$. In 
	this sense, we can look at $p_n(x)$ or $q_n(x)$ as a kind of generalized monomials.
	\item
	It is easy to see that for all $n\in \N$ and $x\in [0,1]$,
	\[q_{n+1}(x) \leq p_n(x)\qquad \text{and} \qquad p_{n+1}(x) \leq q_n(x).\]
	\end{enumerate}
\end{rem}

To prove convergence of the series defined below, we will need the following lemma. 
\begin{lem}\label{convabsch}
	For all $x\in [0,1]$, $z\in \R$ and $n\in \N_0$ holds
	\begin{align*}
	p_{2n+1}(x) &\leq \frac{1}{n!} \, q_2(x)^n, &   q_{2n+1}(x) &\leq \frac{1}{n!} \, p_2(x)^n,\\
	p_{2n}(x) &\leq \frac{1}{n!} \, p_2(x)^n, &   q_{2n}(x) &\leq \frac{1}{n!} \, q_2(x)^n.
	\end{align*}
\end{lem}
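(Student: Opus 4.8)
The plan is to prove all four inequalities simultaneously by induction on $n$, exploiting the fact that $p_2$ and $q_2$ behave like the "natural scale" against which the higher generalized monomials are measured. The base case $n=0$ is immediate: $p_0(x)=q_0(x)=1$ and the right-hand sides are all $\tfrac{1}{0!}\cdot(\cdot)^0 = 1$, while $p_1(x)=\m([0,x])\le 1$ and $q_1(x)=x\le 1$ give the needed bounds for the odd-index estimates at $n=0$ as well (indeed $p_1(x)\le 1 = \tfrac{1}{0!}q_2(x)^0$ is what we want, though one should double-check the statement is for $n\ge 0$ including the degenerate $q_2(x)^0$). So the real content is the induction step.

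For the induction step I would use the defining integral recursions from Definition \ref{defpq} together with monotonicity of the integral. Consider for instance $p_{2n+2}(x)=\int_0^x p_{2n+1}(t)\,dt$. By the inductive hypothesis $p_{2n+1}(t)\le \tfrac{1}{n!}q_2(t)^n$, and since $q_2$ is nondecreasing (it is an integral of nonnegative integrands), $q_2(t)\le q_2(x)$ for $t\le x$; hence $p_{2n+2}(x)\le \tfrac{1}{n!}q_2(x)^n\int_0^x 1\,dt = \tfrac{1}{n!}q_2(x)^n\, x$. This is not quite $\tfrac{1}{(n+1)!}p_2(x)^{n+1}$, so a naive "pull the max out" argument loses too much. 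The correct approach is to keep one factor inside the integral and differentiate: write $p_{2n+2}(x)=\int_0^x p_{2n+1}(t)\,dt$ and $p_{2n+1}(t)=\int_0^t p_{2n}(s)\,d\m(s)$, then bound $p_{2n}(s)\le \tfrac{1}{n!}p_2(s)^n$ and recognize that $\tfrac{1}{n!}p_2(s)^n\, p_2'(s)$—wait, $p_2$ is differentiable in the Lebesgue sense with $p_2' = p_1$—so one wants to engineer a telescoping of the form $\tfrac{d}{d\m}$ or $\tfrac{d}{dx}$ of $p_2^{n+1}/(n+1)!$. The cleanest route: show $\int_0^x p_2(t)^n\,dp_2(t)\le \tfrac{1}{n+1}p_2(x)^{n+1}$ type estimates, i.e. a generalized "power rule under a monotone integrator." Concretely, for a nondecreasing continuous $g$ with $g(0)=0$ one has $\int_0^x g(t)^n\,dg(t)\le \tfrac{1}{n+1}g(x)^{n+1}$ (with equality when $g$ is nice, but $\le$ always, by approximating $dg$ from below or by a Riemann–Stieltjes sum argument since $g^n$ is nondecreasing). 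Applying this with $g=p_2$ or $g=q_2$ and chaining the two integral steps (one $d\m$, one $dt$) that take $p_{2n}\mapsto p_{2n+1}\mapsto p_{2n+2}$ should yield exactly the factor $\tfrac{1}{(n+1)!}p_2(x)^{n+1}$, and symmetrically for the $q$'s and for the odd-index cases.

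Thus the induction step reduces to the following bookkeeping: from $p_{2n}(x)\le \tfrac{1}{n!}p_2(x)^n$ deduce $p_{2n+1}(x)=\int_0^x p_{2n}\,d\m \le \tfrac{1}{n!}\int_0^x p_2(t)^n\,d\m(t)$, and then observe $\int_0^x p_2(t)^n\,d\m(t)$ relates to $q_2(x)^n$ — here one must be careful, because $p_2(t)=\int_0^t \m([0,s])\,ds$ while $q_2(t)=\int_0^t s\,d\m(s)$, and the increment $d\m$ paired with $p_2^n$ does not obviously integrate to a power of $q_2$. The resolution is that $p_2$ and $q_2$ are themselves built by alternating integrations, and the identity one needs is $dp_2 = p_1\,dt = \m([0,t])\,dt$ paired against... — this is the delicate point, so I would instead track the pairing exactly: $p_{2n+1}$ is obtained by $\int d\m$ from $p_{2n}$, and the bound should come from the hypothesis on $q_{2n}(x)\le\tfrac1{n!}q_2(x)^n$ combined with $q_{n+1}\le p_n$ and $p_{n+1}\le q_n$ (the second remark after Definition \ref{defpq}). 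In fact the slick proof uses those comparison inequalities to route every estimate through a single "master" quantity: e.g. $p_{2n+1}(x)\le q_{2n}(x)\le \tfrac1{n!}q_2(x)^n$ reduces the first claim to the fourth, and so on.

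The main obstacle I anticipate is precisely this matching of which of $p_2,q_2$ appears on the right: the four inequalities are interlocked, and a clean induction requires either (a) proving all four together with a carefully chosen order of implications using $q_{n+1}\le p_n$, $p_{n+1}\le q_n$, or (b) establishing the auxiliary "generalized power rule" $\int_0^x g^n\,dg \le \tfrac{1}{n+1}g^{n+1}$ for monotone $g$ and applying it twice per induction step. Option (b) is more self-contained but needs a short Riemann–Stieltjes lemma; option (a) is shorter but requires getting the logical dependencies exactly right so no circularity creeps in. I would present option (a): induct on $n$, and within the step handle $p_{2n},q_{2n},p_{2n+1},q_{2n+1}$ in an order where each follows from an already-established bound of the same level together with monotonicity of $p_2$ or $q_2$ and the elementary estimate $\int_0^x \phi(t)^n\,d\psi(t)\le \phi(x)^n\,\psi(x)$ when $\phi$ is nondecreasing, refined to the $\tfrac{1}{n+1}$ version when $\phi=\psi$. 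Everything else is routine.
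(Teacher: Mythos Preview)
Your final paragraph contains the right ingredients, but the exploration that precedes it never commits to a single clean argument, and you explicitly flag the ``matching of $p_2$ versus $q_2$'' as an unresolved obstacle. It is not one, once the induction is organized correctly.

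Prove the two even-index bounds by a two-step induction from level $n-1$ to level $n$. For $p_{2n}$: from $p_{2n-2}(s)\le\frac{1}{(n-1)!}p_2(s)^{n-1}$ and monotonicity of $p_2$ you get
\[
p_{2n-1}(t)=\int_0^t p_{2n-2}\,d\m \;\le\; \frac{1}{(n-1)!}\,p_2(t)^{n-1}\,p_1(t),
\]
and since $p_2'=p_1$, i.e.\ $p_1\,dt=dp_2$,
\[
p_{2n}(x)=\int_0^x p_{2n-1}\,dt \;\le\; \frac{1}{(n-1)!}\int_0^x p_2(t)^{n-1}\,dp_2(t) \;=\; \frac{p_2(x)^n}{n!}.
\]
The argument for $q_{2n}$ is symmetric, using $q_1\,d\m=dq_2$. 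The odd-index bounds then follow in one line from the comparison inequalities $p_{2n+1}\le q_{2n}$ and $q_{2n+1}\le p_{2n}$ noted after Definition~\ref{defpq}. There is no circularity and no matching puzzle: the intermediate bound on $p_{2n-1}$ used above is \emph{not} the claimed bound $p_{2n-1}\le\frac{1}{(n-1)!}q_2^{n-1}$ but the auxiliary $\frac{1}{(n-1)!}p_2^{n-1}p_1$, which is exactly what integrates to the right thing; this is the point at which your exploration stalled.

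The paper itself does not spell out the argument; it cites Freiberg--L\"obus for the $q$-estimates and declares the $p$-estimates analogous, so your approach (complete induction) is the intended one.
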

\begin{proof}
	The estimates for $q_{2n+1}(x)$ and $q_{2n}(x)$ are proved in Freiberg and Löbus \cite{freiberg:zeros}*{Lemma 2.3} with complete induction. 
	The proof of the other estimates works analogously.
\end{proof}

\begin{defi}\label{defi:sums}
Using the functions $p_n(x)$ and $q_n(x)$ we now define for $x\in[0,1]$ and $z\in\R$:
\begin{align*}
\s_{\m,\l}(z,x)&:= \sum_{n=0}^\infty (-1)^n\, z^{2n+1}p_{2n+1}(x),&\qquad \s_{\l,\m}(z,x)&:= \sum_{n=0}^\infty (-1)^n\, z^{2n+1}q_{2n+1}(x),\\
\c_{\l,\m}(z,x)&:= \sum_{n=0}^\infty (-1)^n\, z^{2n}p_{2n}(x),&\qquad \c_{\m,\l}(z,x)&:= \sum_{n=0}^\infty (-1)^n\, z^{2n}q_{2n}(x).
\end{align*}
\end{defi}
Note that for every $z\in \R$,
\[\c_{\l,\m}(z,\cdot),\,\s_{\l,\m}(z,\cdot) \in H^2(\l,\m)\qquad \text{and} \qquad  \s_{\m,\l}(z,\cdot),\, \c_{\m,\l}(z,\cdot) \in H^2(\m,\l).\]

\begin{rem}
\begin{enumerate}
\item
	If $\m$ is the Lebesgue measure, then
	\[ \s_{\m,\l}(z,x) = \s_{\l,\m}(z,x) = \sin (zx), \qquad \c_{\l,\m}(z,x) = \c_{\m,\l}(z,x) = \cos (zx).\]
\item
	Functions corresponding to $\s_{\l,\m}(z, \cdot)$ and $\c_{\m,\l}(z,\cdot)$ have also been constructed in Freiberg and Löbus \cite{freiberg:zeros},
	where they are used to determine the number of zeros of Dirichlet eigenfunctions.
\end{enumerate}
\end{rem}

\begin{lem}\label{diffrules}
	For every $z\in \R$ the series in Definition \ref{defi:sums} converge uniformly absolutely on $[0,1]$ and the following differentiation rules hold:
\begin{align*}
\frac{d}{d\m} \s_{\m,\l}(z, \cdot) &= z \c_{\l,\m}(z,\cdot), &  \s'_{\l,\m}(z,\cdot) &= z \c_{\m,\l}(z,\cdot),\\
 \c'_{\l,\m}(z,\cdot) & = -z \s_{\m,\l}(z, \cdot), & \frac{d}{d\m} \c_{\m,\l}(z,\cdot) &= -z\s_{\l,\m}(z,\cdot).
\end{align*}
\end{lem}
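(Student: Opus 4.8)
The plan is to deduce everything from the elementary bounds in Lemma \ref{convabsch} together with term-by-term integration against the appropriate measure. First I would establish uniform absolute convergence. By Lemma \ref{convabsch} the $n$th summand of $\s_{\m,\l}(z,\cdot)$ is bounded in absolute value by $\abs{z}^{2n+1}q_2(x)^n/n!$; since $q_2(x)=\int_0^x t\,d\m(t)\le\m([0,x])\le1$ and likewise $p_2(x)=\int_0^x\m([0,t])\,dt\le1$, each of the four series is dominated on $[0,1]$ by $\sum_n\abs{z}^{2n+1}/n!$ or $\sum_n\abs{z}^{2n}/n!$, both of which converge. The Weierstrass $M$-test then gives uniform absolute convergence on $[0,1]$. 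Since the partial sums are continuous (here $\m$ atomless makes $p_1(x)=\m([0,x])$ continuous, and then all $p_n,q_n$ are continuous by induction), each limit function is continuous, hence bounded, hence lies in $L_2(\m)$ and in $L_2(\l)$; this is what is needed for the $\m$- and $\l$-derivatives below to be meaningful.

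Next I would prove the four differentiation rules, all by the same argument applied to the relevant pair. For $\frac{d}{d\m}\s_{\m,\l}(z,\cdot)=z\,\c_{\l,\m}(z,\cdot)$, one notes $\s_{\m,\l}(z,0)=0$ because $p_{2n+1}(0)=0$, and then uses uniform convergence to interchange summation and $\m$-integration:
\[
\int_0^x z\,\c_{\l,\m}(z,t)\,d\m(t)=\sum_{n=0}^\infty(-1)^n z^{2n+1}\int_0^x p_{2n}(t)\,d\m(t)=\sum_{n=0}^\infty(-1)^n z^{2n+1}p_{2n+1}(x)=\s_{\m,\l}(z,x),
\]
the middle equality being the defining relation $p_{2n+1}(x)=\int_0^x p_{2n}(t)\,d\m(t)$ for the odd index $2n+1$ from Definition \ref{defpq}. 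Since $z\,\c_{\l,\m}(z,\cdot)\in L_2(\m)$, this is exactly the assertion of Definition \ref{def:derivative}. The identity $\s'_{\l,\m}(z,\cdot)=z\,\c_{\m,\l}(z,\cdot)$ is obtained verbatim with the $q$'s and Lebesgue integration in place of $\m$-integration. For the two cosine rules one first peels off the constant term $p_0=q_0\equiv1$, writes $p_{2n}(x)=\int_0^x p_{2n-1}(t)\,dt$ (resp.\ $q_{2n}(x)=\int_0^x q_{2n-1}(t)\,d\m(t)$) for $n\ge1$, interchanges sum and integral, and reindexes $m=n-1$ to recognize the integrand as $-z\,\s_{\m,\l}(z,t)$ (resp.\ $-z\,\s_{\l,\m}(z,t)$); this yields $\c_{\l,\m}(z,x)=1-\int_0^x z\,\s_{\m,\l}(z,t)\,dt$ and its $\m$-analogue, hence the two claims.

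I do not expect a genuine obstacle. The only step that is not pure bookkeeping with Definition \ref{defpq} and the parity of the indices is the interchange of the infinite sum with the integral, and that is delivered by the $M$-test convergence of the first paragraph (apply it to the partial sums, together with dominated convergence, or uniform convergence on the compact interval). The only mild care points are keeping track of which measure, $\m$ or $\l$, one integrates against at each stage, and isolating the $n=0$ term in the two cosine series because $p_0$ and $q_0$ are constants rather than integrals.
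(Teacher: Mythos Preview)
Your proposal is correct and follows essentially the same approach as the paper: use the bounds from Lemma~\ref{convabsch} together with $p_2(x),q_2(x)\le 1$ to get uniform absolute convergence via the $M$-test, then differentiate term by term. Your treatment is in fact more explicit than the paper's, which simply asserts that term-by-term differentiation yields the rules, whereas you verify the defining integral identity from Definition~\ref{def:derivative} directly.
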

\begin{proof}
	Let $z\in\R$. Since $q_2(x)\leq 1$ for $x\in [0,1]$, we get by Lemma \ref{convabsch} for $N\in \N$
	\[\sup_{x\in[0,1]}\sum_{n=N}^\infty \abs{z}^{2n+1}p_{2n+1}(x) \leq \sup_{x\in[0,1]} \sum_{n=N}^\infty \frac{\abs{z}^{2n+1}\,q_2(x)^n}{n!} \leq \sum_{n=N}^\infty \frac{\abs{z}^{2n+1}}{n!}. \]
	Hence, for every $z\in \R$ the series $\sum\limits_{n=0}^\infty \abs{z}^{2n+1}p_{2n+1}(x)$ converges uniformly in $x$.	The proof for the other series works analogously with the estimates in Lemma \ref{convabsch}. Thus, we can differentiate term by term and get the above rules.
\end{proof}

Now we show the relation between $\c_{\l,\m}(z,\cdot)$ and $\s_{\l,\m}(z,\cdot)$ to the eigenvalue problem for $\Delta_{\m}$.
Consider the Neumann problem
\begin{gather*}
\frac{d}{d\m} f'=-\l f\\
f'(0)=f'(1)=0.
\end{gather*}

It is well known that the eigenvalues can be sorted according to size such that
\[ \l_{N,0} < \l_{N,1} < \l_{N,2} < \dotsm, \]
where $\l_{N,0} =0$ and $\lim\limits_{m\to \infty} \l_{N,m} = \infty$.

\label{NandDProb}
\begin{pro}\label{pro:neumanneigenvalues}
	The Neumann eigenvalues $\l_{N,m}$, $m\in \N_0$, are the squares of the non-negative zeros of the function $\sin_{\m}^N$ given by
	\[\sin_{\m}^N(z):=\s_{\m,\l}(z,1)=\sum_{n=0}^\infty (-1)^n p_{2n+1} z^{2n+1},\qquad \text{for $z\in\R$},\]
	where we write $p_n$ instead of $p_n(1)$ for simplicity.	The corresponding eigenfunctions $f_{N,m}$ are given by
	\[ f_{N,m} (x) := \c_{\l,\m}\bigl( \sqrt{\l_{N,m}}, x\bigr) = \sum_{n=0}^\infty (-1)^n\, \l_{N,m}^n\, p_{2n}(x), \quad x\in [0,1].\]
\end{pro}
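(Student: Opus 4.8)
The plan is to verify directly that $\c_{\l,\m}(z,\cdot)$ already satisfies the eigenvalue equation and the left Neumann condition for every $z\in\R$, so that only the right condition $f'(1)=0$ remains, and that this last condition is equivalent to $z$ being a zero of $\sin_{\m}^N$.

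First I would apply the differentiation rules of Lemma \ref{diffrules} to $f:=\c_{\l,\m}(z,\cdot)\in H^2(\l,\m)$: they give $f'=\c'_{\l,\m}(z,\cdot)=-z\,\s_{\m,\l}(z,\cdot)$ and therefore $\frac{d}{d\m}f'=-z\cdot z\,\c_{\l,\m}(z,\cdot)=-z^2f$, so $f$ solves $\frac{d}{d\m}f'=-\l f$ with $\l=z^2$. Since $p_k(0)=0$ for every $k\ge 1$, the series defining $\s_{\m,\l}(z,0)$ vanishes term by term, hence $f'(0)=-z\,\s_{\m,\l}(z,0)=0$; and $f(0)=\c_{\l,\m}(z,0)=p_0(0)=1\ne 0$, so $f$ is never the zero function. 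Finally $f'(1)=-z\,\s_{\m,\l}(z,1)=-z\,\sin_{\m}^N(z)$; as $\sin_{\m}^N(0)=0$, this vanishes if and only if $\sin_{\m}^N(z)=0$. Thus each non-negative zero $z$ of $\sin_{\m}^N$ gives, via $\c_{\l,\m}(z,\cdot)$, a nontrivial Neumann eigenfunction, so $z^2$ is a Neumann eigenvalue.

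For the reverse inclusion I would show that the left Neumann condition determines the eigenfunction up to a scalar. Let $g$ be an eigenfunction for a Neumann eigenvalue $\l$; recall $\l\ge 0$, so put $z=\sqrt\l$ and $\tilde g:=g-g(0)\,\c_{\l,\m}(z,\cdot)$. Then $\tilde g$ solves the same equation with $\tilde g(0)=\tilde g'(0)=0$, and iterating $\tilde g'(x)=-\l\int_0^x\tilde g\,d\m$ together with $\tilde g(x)=\int_0^x\tilde g'\,dt$ gives a Gronwall-type bound that forces $\tilde g\equiv 0$ first on a short interval $[0,\delta]$ with $\delta$ depending only on $\l$ (using that $\m$ is a probability measure), and then, by repeating finitely many times, on all of $[0,1]$; alternatively one may quote the uniqueness of solutions of the initial value problem for $\Delta_{\m}$ from Freiberg \cite{freiberg:analytical}, or note that $g'\,\c_{\l,\m}(z,\cdot)-g\,\c'_{\l,\m}(z,\cdot)$ is constant by Lemma \ref{parts} and vanishes at $0$. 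Hence $g=g(0)\,\c_{\l,\m}(z,\cdot)$ with $g(0)\ne 0$, and $0=g'(1)=-g(0)\,z\,\sin_{\m}^N(z)$ forces $\sin_{\m}^N(z)=0$. Consequently the set of Neumann eigenvalues equals the set of squares of non-negative zeros of $\sin_{\m}^N$; since $t\mapsto t^2$ is strictly increasing on $[0,\infty)$ and the eigenvalues are simple and listed increasingly (with $\l_{N,0}=0$ matching the zero $z=0$), we get $\l_{N,m}=z_m^2$ for the $m$-th non-negative zero $z_m$, and the stated series for $f_{N,m}$ follows from inserting $z=\sqrt{\l_{N,m}}$ into the definition of $\c_{\l,\m}$ and using $(\sqrt{\l_{N,m}})^{2n}=\l_{N,m}^{\,n}$.

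I expect the only real obstacle to be the uniqueness argument in the second half: one must rule out a second, linearly independent solution of $\frac{d}{d\m}f'=-\l f$ that also meets $f'(0)=0$, which is exactly what the Gronwall iteration (or the appeal to Freiberg's $\m$-calculus) provides. Everything else is a mechanical consequence of Lemma \ref{diffrules} and the fact that $p_k$ vanishes at $0$ for $k\ge1$.
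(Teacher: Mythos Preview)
Your argument is correct and follows essentially the same route as the paper: verify via Lemma~\ref{diffrules} that $\c_{\l,\m}(z,\cdot)$ solves the equation and the left Neumann condition, reduce the right condition to $\sin_{\m}^N(z)=0$, and invoke uniqueness of the initial value problem (the paper simply cites Freiberg \cite{freiberg:analytical}, p.~40, where you spell out a Gronwall/Wronskian alternative). Your treatment of the reverse inclusion and of the ordering of eigenvalues versus zeros is more explicit than the paper's, but not substantively different.
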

\begin{proof}
	Using the differentiation rules from Lemma \ref{diffrules} it is easy to see that $\c_{\l,\m}(z,\cdot)$ satisfies the eigenvalue equation if $\l=z^2$, while it also fulfills
	the left boundary condition $\c'_{\l,\m}(z,0)=-z\s_{\m,\l}(z,0)=0$. Here, the dash refers to the second argument of $\c_{\l,\m}$.
	In order that $\c_{\l,\m}(z,\cdot)$ satisfies the right boundary condition, too, $z$ has to be zero itself or it must be
	chosen such that $\s_{\m,\l}(z,1)=0$. It is known (see Freiberg \cite{freiberg:analytical} p.40) that the solution of the above problem is unique up to a
	multiplicative constant. So $z$ is a zero point of $\sin_{\m}^N$ if and only if $z^2$ is a Neumann eigenvalue of $-\frac{d}{d\m}\frac{d}{dx}$.

	Thus, for $m\in \N_0$, $f_{N,m} = \c_{\l,\m}\bigl( \sqrt{\l_{N,m}}, x\bigr) $	is an eigenfunction to the $m$th Neumann eigenvalue $\l_{N,m}$. 
\end{proof}

We treat the Dirichlet eigenvalue problem
\begin{gather*}
\frac{d}{d\m} f'=-\l f\\
f(0)=f(1)=0
\end{gather*}
similarly. We denote the Dirichlet eigenvalues such that
\[ \l_{D,1} < \l_{D,2} < \l_{D,3} < \dotsm \]
where $\l_{D,1} > 0$ and $\lim\limits_{n\to \infty} \l_{D,n} = \infty$.

\begin{pro}\label{pro:dirichleteigenvalues}
	The Dirichlet eigenvalues $\l_{D,m}$, $m\in \N$, are the squares of the positive zeros of the function $\sin_{\m}^D$ given by
	\[ \sin_{\m}^D(z) :=\s_{\l,\m}(z,1)=\sum_{n=0}^\infty (-1)^n q_{2n+1} z^{2n+1}, \qquad \text{for $z\in\R$}\]
	where, as above, $q_n$ stands for $q_n(1)$. The corresponding eigenfunctions $f_{D,m}$ are given by
	\[ f_{D,m} (x) = \s_{\l,\m}\bigl( \sqrt{\l_{D,m}}, x \bigr) = \sqrt{\l_{D,m}} \sum_{n=0}^\infty (-1)^n\, \l_{D,m}^{n}\, q_{2n+1}(x), \quad x\in [0,1].\]
\end{pro}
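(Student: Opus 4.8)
The plan is to mirror the proof of Proposition~\ref{pro:neumanneigenvalues}, interchanging the roles of the Neumann and Dirichlet boundary conditions and of the generalized monomials $p_n$ and $q_n$. First, from the differentiation rules in Lemma~\ref{diffrules} one computes
\[
 \frac{d}{d\m}\frac{d}{dx}\,\s_{\l,\m}(z,\cdot)
 = \frac{d}{d\m}\bigl(z\,\c_{\m,\l}(z,\cdot)\bigr)
 = -z^{2}\,\s_{\l,\m}(z,\cdot),
\]
so that $\s_{\l,\m}(z,\cdot)\in H^{2}(\l,\m)$ solves $\Delta_{\m}f=-\l f$ with $\l=z^{2}$. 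Next, since $q_{2n+1}(0)=0$ for every $n\in\N_{0}$ (each defining integral runs over the degenerate interval $[0,0]$), every summand of $\s_{\l,\m}(z,0)$ vanishes, so $f(0)=\s_{\l,\m}(z,0)=0$ holds automatically for all $z$; this is the analogue of the left Neumann condition being satisfied by $\c_{\l,\m}$.

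It then remains to impose the right condition $f(1)=\s_{\l,\m}(z,1)=\sin_{\m}^{D}(z)=0$. Exactly as in the Neumann case I would invoke the uniqueness, up to a multiplicative constant, of the solution of the eigenvalue equation together with the left boundary condition (Freiberg~\cite{freiberg:analytical}); since $\s'_{\l,\m}(z,0)=z\,\c_{\m,\l}(z,0)=z\neq0$ for $z\neq0$, the function $\s_{\l,\m}(z,\cdot)$ is a nonzero representative of that one-dimensional solution space. Hence $z^{2}$ is a Dirichlet eigenvalue if and only if $z$ is a zero of $\sin_{\m}^{D}$; because $\sin_{\m}^{D}$ is odd and $z=0$ gives the trivial function, and because all Dirichlet eigenvalues are positive, the relevant zeros are precisely the positive ones, and sorting them by magnitude matches the enumeration $\l_{D,1}<\l_{D,2}<\dotsm$. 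Finally, the displayed closed form for $f_{D,m}$ is obtained by writing $z^{2n+1}=z\cdot z^{2n}$ with $z=\sqrt{\l_{D,m}}$ in the series of Definition~\ref{defi:sums} and pulling $\sqrt{\l_{D,m}}$ out front.

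The one delicate point is the equivalence ``$z$ a zero of $\sin_{\m}^{D}$'' $\Longleftrightarrow$ ``$z^{2}$ a Dirichlet eigenvalue'': the forward implication is immediate from the computation above, but the converse rests on the one-dimensionality of the solution space of the eigenvalue equation supplemented by $f(0)=0$, which I would cite rather than reprove. A secondary check is that the positive zeros of $\sin_{\m}^{D}$ form an infinite set without finite accumulation point, so that the correspondence with the discrete, unbounded Dirichlet spectrum is genuinely order-preserving; this can be read off the general spectral results quoted in the introduction or from the zero-counting analysis of Freiberg and L\"obus~\cite{freiberg:zeros}. Everything else is routine bookkeeping with the already-established series manipulations.
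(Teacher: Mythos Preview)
Your proposal is correct and follows essentially the same approach as the paper's own proof: verify via Lemma~\ref{diffrules} that $\s_{\l,\m}(z,\cdot)$ solves the equation with $\l=z^2$, check the left boundary condition $\s_{\l,\m}(z,0)=0$, impose the right condition $\sin_{\m}^D(z)=0$, and invoke the one-dimensionality of the solution space from Freiberg~\cite{freiberg:analytical} for the converse. You have in fact supplied more detail than the paper (the explicit nonvanishing $\s'_{\l,\m}(z,0)=z$, the order-preserving bijection with the spectrum), but the skeleton is identical.
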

\begin{proof}
	The function $\s_{\l,\m}(z,\cdot)$ satisfies the equation if $\l=z^2$ and also the left boundary condition  $\s_{\l,\m}(z,0)=0$ . The right boundary condition gives $\s_{\l,\m}(z,1)=0$.
	So $z^2$ is a Dirichlet eigenvalue of $-\frac{d}{d\m}\frac{d}{dx}$ if and only if $z$ is a  zero point of $\sin_{\m}^D$  and $z \neq 0$.
	
	Thus, for $m\in \N$, the function $f_{D,m}= \s_{\l,\m}\bigl( \sqrt{\l_{D,m}}, x\bigr)$ is an eigenfunction to the $m$th Dirichlet eigenvalue $\l_{D,m}$. 
\end{proof}

So if we only know the sequences $\big(p_n(1)\big)_n$ and $\big(q_n(1)\big)_n$, we can determine the Neumann and Dirichlet eigenvalues by means of the functions $\sin_{\m}^N$ and $\sin_{\m}^D$.

\begin{rem}
\begin{enumerate}
	\item
	As was pointed out to me only recently by V. Kravchenko, a construction analogous to that in Definitions \ref{defpq} and \ref{defi:sums} has also been done in 
	 \cite{kravchenko:representation}	for Sturm-Liouville equations of the form
	\[ (pu')' + q u = z^2\, u.\]
	There, the corresponding spectral problem is also transformed to the problem of finding zeros of a power series as
	in Propositions \ref{pro:neumanneigenvalues} and \ref{pro:dirichleteigenvalues}. See also Kravchenko and  Porter \cite{kravchenko:spps}.
	\item
	An eigenfunction is only unique up to a multiplicative constant. Throughout the chapter we will use the notations $f_{N,m}$ and $f_{D,m}$
	for the eigenfunctions as  constructed above. One would also get these by imposing the additional conditions $f_{N,m}(0)=1$ and $f'_{D,m}(0)=\sqrt{\l_{D,m}}$.
\end{enumerate}
\end{rem}

Analogously to $\sin_{\m}^N$ and $\sin_{\m}^D$ we define 
\[ \cos_{\m}^N(z):= \c_{\l,\m}(z,1) = \sum_{n=0}^\infty (-1)^n p_{2n}z^{2n}\]
and
\[ \cos_{\m}^D(z):= \c_{\m,\l}(z,1) = \sum_{n=0}^\infty (-1)^n q_{2n} z^{2n}\]
for $z\in\R$.

\label{NDDNproblem}
These functions are linked with the eigenvalue problems with mixed boundary conditions
\begin{align*}
\hspace{-3cm}(ND)&\hspace{1.5cm}
\begin{gathered}
\frac{d}{d\m} f'=-\l f\\
f'(0)=0, \quad f(1)=0,
\end{gathered}\\
\intertext{and}
\hspace{-3cm}(DN)&\hspace{1.5cm}
\begin{gathered}
\frac{d}{d\m} f'=-\l f\\
f(0)=0, \quad f'(1)=0.
\end{gathered}
\end{align*}
We treat these problems as the problems in the above Propositions \ref{pro:neumanneigenvalues} and \ref{pro:dirichleteigenvalues}.
If $\l>0$ is chosen such that $\cos_{\m}^N\bigl(\sqrt{\l}\bigr)=0$, the solutions to (ND) are multiples of $\c_{\l,\m}\bigl( \sqrt{\l}, \cdot \bigr)$, because 
\[ \c'_{\l,\m}\bigl( \sqrt{\l}, 0 \bigr)=-\sqrt{\l}\s_{\m,\l}(\sqrt{\l},0)=0\]
and
\[ \c_{\l,\m}\bigl( \sqrt{\l}, 1 \bigr) =\cos_{\m}^N\bigl(\sqrt{\l}\bigr).\]
Similarly, if $\l>0$ satisfies $\cos_{\m}^D\bigl(\sqrt{\l}\bigr)=0$, the solutions to (DN) are multiples of $\s_{\l,\m}(\sqrt{\l}, \cdot)$, because 
\[ \s_{\l,\m}(\sqrt{\l}, 0)=0\]
and
\[ \s_{\l,\m}'(\sqrt{\l}, 1)=\sqrt{\l}\c_{\m,\l}(\sqrt{\l}, 1)=\sqrt{\l} \cos_{\m}^D\bigl(\sqrt{\l}\bigr),\]
where the derivative refers to the second argument of $\s_{\l,\m}$.
Therefore, the (ND) eigenvalues are the squares of the zeros of $\cos_{\m}^N$ and the (DN) eigenvalues are the squares of the zeros of $\cos_{\m}^D$.

In the course of the following sections we will often use some of the following easy to prove multiplication formulas which we state here for easy reference. For absolutely summable sequences $(a_n)_n$ and $(b_n)_n$ holds
\begin{align}
\label{formgg}\bigg( \sum_{j=0}^\infty a_{2j} \bigg) \cdot \bigg( \sum_{k=0}^\infty b_{2k} \bigg) 		& = \sum_{n=0}^\infty \sum_{k=0}^n a_{2k}\, b_{2n-2k}, \\
\label{formgu}\bigg( \sum_{j=0}^\infty a_{2j} \bigg) \cdot \bigg( \sum_{k=0}^\infty b_{2k+1} \bigg) 	&	= \sum_{n=0}^\infty \sum_{k=0}^n a_{2k}\, b_{2n+1-2k}, \\
\label{formuu}\bigg( \sum_{j=0}^\infty a_{2j+1} \bigg) \cdot \bigg( \sum_{k=0}^\infty b_{2k+1} \bigg) &	= \sum_{n=0}^\infty \sum_{k=0}^n a_{2k+1}\, b_{2n+1-2k}.
\end{align}

\section{Calculation of $L_2$-norms}
\label{sec:l2norm}

It turns out that by knowing the sequences $(p_n)_n$ and $(q_n)_n$ we can not only determine the Neumann and Dirichlet eigenvalues, but also the $L_2(\m)$-norms of the eigenfunctions $f_{N,m}$ and $f_{D,m}$. We will need the following lemma to achieve that.

\begin{lem}\label{lem:intpp}
	For $k, n \in \N_0$ with $k\leq n$ and for all $x\in [0,1]$ we have
	\begin{equation}\label{intpp} \int_0^x p_{2k}(t) \, p_{2n-2k}(t) \, d\m(t) = \sum_{j=0}^{2k} (-1)^j p_j(x) \, p_{2n+1-j}(x)\end{equation}
	and
	\begin{equation}\label{intqq} \int_0^x q_{2k+1}(t) \, q_{2n+1-2k}(t) \, d\m(t) = \sum_{j=0}^{2k+1} (-1)^{j+1} q_j(x) \, q_{2n+3-j}(x).\end{equation}
\end{lem}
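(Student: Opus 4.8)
The plan is to prove both identities by induction on $n$, and for each fixed $n$ by a secondary induction on $k$, using the integration-by-parts formula in Lemma~\ref{parts} together with the differentiation rules $p_{2k}' = p_{2k-1}$, $\frac{d}{d\m}p_{2k+1} = p_{2k}$, etc., from the paragraph after Definition~\ref{defpq}. I will spell out the argument for \eqref{intpp}; the proof of \eqref{intqq} is entirely parallel with the roles of $\m$ and $\l$ interchanged (and an extra sign, which accounts for the $(-1)^{j+1}$).

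First I would treat the base case $k=0$. Here the left-hand side is $\int_0^x p_0(t)\,p_{2n}(t)\,d\m(t) = \int_0^x p_{2n}(t)\,d\m(t)$. Since $p_{2n} \in H^1(\l)$ and $p_{2n}' = p_{2n-1}$, and $p_{2n+1}(x) = \int_0^x p_{2n}\,d\m$ by definition (as $2n+1$ is odd), this equals $p_{2n+1}(x)$; on the other hand the right-hand side sum $\sum_{j=0}^{0}(-1)^j p_j(x)p_{2n+1-j}(x)$ is $p_0(x)p_{2n+1}(x) = p_{2n+1}(x)$. So the base case holds. For the inductive step in $k$, I would pass from $\int_0^x p_{2k}(t)p_{2n-2k}(t)\,d\m(t)$ to $\int_0^x p_{2k+2}(t)p_{2n-2k-2}(t)\,d\m(t)$ by applying Lemma~\ref{parts} twice: once to move a $d\m$-integral onto $\m$-derivatives and once for the Lebesgue integral. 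Concretely, write $p_{2k} = \frac{d}{d\m}p_{2k+1}$, apply the integration-by-parts lemma to get $\int_0^x p_{2k}p_{2n-2k}\,d\m = p_{2k+1}(x)p_{2n-2k}(x) - \int_0^x p_{2k+1}(t)\,p_{2n-2k}'(t)\,dt$, then note $p_{2n-2k}' = p_{2n-2k-1}$ and apply Lemma~\ref{parts} a second time with $p_{2n-2k-1} = (p_{2n-2k})'$ replaced by the appropriate pairing so that a factor $p_{2k+2}$ is created and a $d\m$-integral $\int_0^x p_{2k+2}p_{2n-2k-2}\,d\m$ reappears. Collecting the boundary terms $p_{2k+1}(x)p_{2n-2k}(x)$ and $-p_{2k+2}(x)p_{2n-2k-1}(x)$ gives exactly the two new summands $(-1)^{2k+1}p_{2k+1}(x)p_{2n-2k}(x) + (-1)^{2k+2}p_{2k+2}(x)p_{2n-2k-1}(x)$ that distinguish $\sum_{j=0}^{2k+2}$ from $\sum_{j=0}^{2k}$, which closes the induction on $k$. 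Since $k$ ranges up to $n$ and the recursion only ever refers to indices within the same $n$, no outer induction on $n$ is actually needed once the two-step integration-by-parts computation is carried out in general; alternatively one can phrase the whole thing as a single induction on $k$ with $n$ as a parameter.

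The main obstacle I anticipate is bookkeeping: making sure the two successive applications of Lemma~\ref{parts} are set up with the correct function in $H^1(\m)$ versus $H^1(\l)$ at each stage (the lemma is asymmetric — it requires $f \in H^1(\m)$ and $g \in H^1(\l)$), and tracking the signs so that the telescoping produces precisely the alternating sum with the stated exponents. One must also check that all functions involved lie in the right spaces: $p_{2j} \in H^1(\l)$ and $p_{2j+1} \in H^1(\m)$, which is immediate from Definition~\ref{defpq}, and that the boundary term at $0$ vanishes, which holds because $p_n(0) = 0$ for all $n \geq 1$. For \eqref{intqq} the same scheme applies starting from $\int_0^x q_1(t)q_{2n+1}(t)\,d\m(t)$: here $q_1 \in H^1(\l)$ has $q_1' = q_0 = 1$, so one first integrates by parts in the Lebesgue sense, and the extra overall sign in the claimed formula comes from the fact that the first nonzero boundary term is $q_1(x)q_{2n+1}(x)$ appearing with a minus sign in the $j=0$ term's convention shift; the rest of the induction on $k$ mirrors the $p$-case verbatim.
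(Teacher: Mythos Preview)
Your proposal is correct and follows essentially the same induction-on-$k$ strategy as the paper: apply Lemma~\ref{parts} twice to relate $\int_0^x p_{2k+2}\,p_{2n-2k-2}\,d\m$ and $\int_0^x p_{2k}\,p_{2n-2k}\,d\m$, picking up exactly the two alternating boundary terms that extend the sum from $j\le 2k$ to $j\le 2k+2$. The only slip is cosmetic: the boundary terms you name, $+p_{2k+1}(x)p_{2n-2k}(x)$ and $-p_{2k+2}(x)p_{2n-2k-1}(x)$, carry the opposite signs from the ``new summands'' $(-1)^{2k+1}p_{2k+1}p_{2n-2k}+(-1)^{2k+2}p_{2k+2}p_{2n-2k-1}$ you match them to, because you wrote the two-step identity in one direction and then implicitly solved for the other side --- once you move those terms across the equals sign the signs agree and the induction closes exactly as in the paper.
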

\begin{proof}
	We prove \eqref{intpp} by induction on $k$. If $k=0$ and $n\geq 0$, we have
	\[ \int_0^x p_{0}(t) \, p_{2n}(t) \, d\m(t) = p_{2n+1}(x)\]
	and so the assertion holds. Now, take $k\in \N_0$ and assume that the assertion holds for $k$ and all $n\geq k$. Then, for all $n \geq k+1$, 
	\begin{align*}
	&\int_0^x p_{2k+2}(t) \, p_{2n-2k-2}(t) \, d\m(t)		= p_{2k+2}(x)\,p_{2n-2k-1}(x) - \int_0^x p_{2k+1}(t)\,p_{2n-2k-1}(t)\,dt \\
	&																										= p_{2k+2}(x)\,p_{2n-2k-1}(x) - p_{2k+1}(x)\,p_{2n-2k}(x) + \int_0^x p_{2k}(t)\,p_{2n-2k}(t)\,d\m(t),
	\end{align*}
	by Lemma \ref{parts}.
	Thus, by the induction hypothesis, we have for all $n\geq k+1$
	\[ \int_0^x p_{2k+2}(t) \, p_{2n-2k-2}(t) \, d\m(t)		= \sum_{j=0}^{2k+2} (-1)^j p_j(x) \, p_{2n+1-j}(x),\]
	which proves \eqref{intpp}.
	
	The proof of \eqref{intqq} works the same way.
\end{proof}

\begin{pro}\label{L2norm}
	Let $z\in \R$. Then
	\begin{equation}\label{L2normcp}
		\norm{\c_{\l,\m}(z,\cdot)}{L_2(\m)}^2 = \sum_{n=0}^\infty (-1)^n z^{2n} \sum_{k=0}^n (n+1 - 2k) \, p_{2k}\, p_{2n+1-2k},
	\end{equation}
	and
	\begin{equation}\label{L2normsq}
		\norm{\s_{\l,\m}(z,\cdot)}{L_2(\m)}^2 = \sum_{n=0}^\infty (-1)^n z^{2n+2} \sum_{k=0}^{n+1} (n+1 - 2k) \, q_{2k+1}\, q_{2n+2-2k},
	\end{equation}
	where $p_j=p_j(1)$ and $q_j=q_j(1)$.
\end{pro}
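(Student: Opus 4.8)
The plan is to compute $\norm{\c_{\l,\m}(z,\cdot)}{L_2(\m)}^2$ directly from the series definition of $\c_{\l,\m}(z,\cdot)$, using the multiplication formula \eqref{formgg} to expand the square, then apply Lemma \ref{lem:intpp} to evaluate the resulting $\m$-integrals at $x=1$. First I would write
\[
\norm{\c_{\l,\m}(z,\cdot)}{L_2(\m)}^2 = \int_0^1 \biggl( \sum_{j=0}^\infty (-1)^j z^{2j} p_{2j}(t) \biggr)^{\!2} d\m(t).
\]
Since the series converges uniformly absolutely on $[0,1]$ by Lemma \ref{diffrules}, the square of the series equals the Cauchy product and may be integrated term by term. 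Applying \eqref{formgg} with $a_{2j} = b_{2j} = (-1)^j z^{2j} p_{2j}(t)$ gives
\[
\norm{\c_{\l,\m}(z,\cdot)}{L_2(\m)}^2 = \sum_{n=0}^\infty (-1)^n z^{2n} \sum_{k=0}^n \int_0^1 p_{2k}(t)\, p_{2n-2k}(t)\, d\m(t),
\]
where the sign $(-1)^k (-1)^{n-k} = (-1)^n$ has been collected and $z^{2k} z^{2n-2k} = z^{2n}$.

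Next I would insert the identity \eqref{intpp} of Lemma \ref{lem:intpp} at $x=1$, namely $\int_0^1 p_{2k}(t)\,p_{2n-2k}(t)\,d\m(t) = \sum_{j=0}^{2k} (-1)^j p_j\, p_{2n+1-j}$, so that
\[
\norm{\c_{\l,\m}(z,\cdot)}{L_2(\m)}^2 = \sum_{n=0}^\infty (-1)^n z^{2n} \sum_{k=0}^n \sum_{j=0}^{2k} (-1)^j p_j\, p_{2n+1-j}.
\]
The remaining task is purely combinatorial: for fixed $n$, simplify $\sum_{k=0}^n \sum_{j=0}^{2k} (-1)^j p_j\, p_{2n+1-j}$ into $\sum_{k=0}^n (n+1-2k)\, p_{2k}\, p_{2n+1-2k}$. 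I would exchange the order of summation over $j$ and $k$: the pair $p_j p_{2n+1-j}$ with a given $j \in \{0, \dots, 2n\}$ appears with weight $(-1)^j \#\{k : j \le 2k \le 2n,\ 0 \le k \le n\}$; splitting into $j$ even and $j$ odd and pairing the term for $j$ with the term for $2n+1-j$ (noting $p_j p_{2n+1-j}$ is symmetric under $j \mapsto 2n+1-j$) should collapse the alternating sum, leaving only the even-index diagonal terms $p_{2k}p_{2n+1-2k}$ with the stated integer coefficient $n+1-2k$. This bookkeeping is the one place where care is needed, but it is elementary.

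For \eqref{L2normsq} the argument is the same with $\s_{\l,\m}(z,\cdot) = \sum_n (-1)^n z^{2n+1} q_{2n+1}(\cdot)$: squaring via \eqref{formuu}, integrating term by term, applying \eqref{intqq} of Lemma \ref{lem:intpp}, and reorganizing the double sum; the shift in index ranges (sums of $q_{2k+1}q_{2n+1-2k}$, and the appearance of $z^{2n+2}$) accounts for the slightly different shape of the formula, with the sign $(-1)^{j+1}$ in \eqref{intqq} producing the same net $(-1)^n$ after the combinatorial collapse. The main obstacle I anticipate is getting the index arithmetic in this final rearrangement exactly right — tracking which diagonal terms survive and verifying the coefficient is $n+1-2k$ (and not, say, $n-2k$ or $n+1-k$) — so I would double-check it on the Lebesgue case $p_j = q_j = 1/j!$, where the identity must reduce to the known expansion of $\int_0^1 \cos^2(zt)\,dt$.
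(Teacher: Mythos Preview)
Your proposal is correct and follows essentially the same route as the paper's proof: expand the square via the Cauchy product \eqref{formgg}, integrate term by term, apply \eqref{intpp} at $x=1$, and then simplify the double sum $\sum_{k=0}^n\sum_{j=0}^{2k}(-1)^j p_j\,p_{2n+1-j}$ by splitting into even and odd $j$ and using the symmetry $p_j\,p_{2n+1-j}=p_{2n+1-j}\,p_j$ to re-index the odd part. The paper carries out that last combinatorial step explicitly (writing $\sum_{k=0}^n\sum_{j=0}^{2k}a_j=\sum_{j=0}^n(n-j+1)a_{2j}+\sum_{j=1}^n(n-j+1)a_{2j-1}$ and then re-indexing), which is exactly the bookkeeping you outline.
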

\begin{proof}
	First we prove \eqref{L2normcp}. Using \eqref{formgg} we get for all $x\in [0,1]$ and $z\in\R$
	\begin{align*}
	\c_{\l,\m}(z,x)^2		&	= \bigg( \sum_{j=0}^\infty (-1)^j z^{2j} p_{2j}(x)\bigg)\bigg( \sum_{k=0}^\infty (-1)^k z^{2k} p_{2k}(x)\bigg)\\
								& = \sum_{n=0}^\infty (-1)^n\,z^{2n}\,\sum_{k=0}^n p_{2k}(x)\,p_{2n-2k}(x).
	\end{align*}
	Consequently, applying \eqref{intpp},
	\begin{align*}
	\norm{\c_{\l,\m}(z,\cdot)}{L_2(\m)}^2 & = \int_0^1 \c_{\l,\m}(z,t)^2\,d\mu(t) \\
													& = \sum_{n=0}^\infty (-1)^n\,z^{2n}\,\sum_{k=0}^n \int_0^1 p_{2k}(t)\,p_{2n-2k}(t)\,d\m(t)\\
													& = \sum_{n=0}^\infty (-1)^n\,z^{2n}\,\sum_{k=0}^n \sum_{j=0}^{2k} (-1)^j p_j \, p_{2n+1-j}.
	\end{align*}
	Note that for any sequence $a=(a_j)_{j\in\N_0}$ holds
	\begin{align*}
		\sum_{k=0}^n \sum_{j=0}^{2k} a_j	&	= \sum_{k=0}^n \sum_{j=0}^k a_{2j} + \sum_{k=1}^n \sum_{j=0}^{k-1} a_{2j+1}\\
																			& = \sum_{j=0}^n \sum_{k=j}^n a_{2j} + \sum_{j=0}^{n-1} \sum_{k=j+1}^n a_{2j+1}\\
																			& = \sum_{j=0}^n (n-j+1)\, a_{2j} + \sum_{j=1}^n (n-j+1)\, a_{2j-1}
	\end{align*}
	and thus,
	\begin{align*}
		\sum_{k=0}^n \sum_{j=0}^{2k} (-1)^j p_j \, p_{2n+1-j}	& = \sum_{k=0}^n (n-k+1)\, p_{2k}\, p_{2n+1-2k} - \sum_{k=1}^n (n-k+1)\, p_{2k-1}\, p_{2n+2-2k}\\
																													& = \sum_{k=0}^n (n-k+1)\, p_{2k}\, p_{2n+1-2k} - \sum_{k=1}^n k \, p_{2k} \, p_{2n+1-2k}\\
																													& = \sum_{k=0}^n (n+1 - 2k)\, p_{2k}\, p_{2n+1-2k},
	\end{align*}
	which proves \eqref{L2normcp}.
	
	The proof of \eqref{L2normsq} works analogously.
\end{proof}

We put $z=\sqrt{\l_{N,m}}$ and $z= \sqrt{\l_{D,m}}$ to get the following corollary.

\begin{cor}\label{cor:l2norm}
	The $L_2(\m)$-norm of the Neumann eigenfunction $f_{N,m}$ is given by
	\[ \norm{f_{N,m}}{L_2(\m)}^2 = \sum_{n=0}^\infty (-1)^n \l_{N,m}^n \sum_{k=0}^n (n+1 - 2k) \, p_{2k}\, p_{2n+1-2k}\]
	and of the Dirichlet eigenfunction $f_{D,m}$ by
	\[\norm{f_{D,m}}{L_2(\m)}^2 = \sum_{n=0}^\infty (-1)^n \l_{D,m}^{n+1} \sum_{k=0}^{n+1} (n+1 - 2k) \, q_{2k+1}\, q_{2n+2-2k}.\]
\end{cor}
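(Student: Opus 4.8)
The plan is essentially to read off the two formulas from Proposition~\ref{L2norm} by a single substitution, since all the combinatorial work has already been done there. First I would recall the explicit form of the eigenfunctions: by Proposition~\ref{pro:neumanneigenvalues} the $m$th Neumann eigenfunction is $f_{N,m}=\c_{\l,\m}\bigl(\sqrt{\l_{N,m}},\cdot\bigr)$, and by Proposition~\ref{pro:dirichleteigenvalues} the $m$th Dirichlet eigenfunction is $f_{D,m}=\s_{\l,\m}\bigl(\sqrt{\l_{D,m}},\cdot\bigr)$. Both $\l_{N,m}$ and $\l_{D,m}$ are non-negative real numbers (indeed $\l_{N,m}\geq 0$ and $\l_{D,m}>0$), so $\sqrt{\l_{N,m}}$ and $\sqrt{\l_{D,m}}$ are well-defined elements of $\R$, and Proposition~\ref{L2norm}, which is stated for arbitrary $z\in\R$, applies to each of them.

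For the Neumann norm I would put $z=\sqrt{\l_{N,m}}$ in \eqref{L2normcp}. Then $z^{2n}=\l_{N,m}^{\,n}$ for every $n$, and \eqref{L2normcp} immediately becomes
\[
\norm{f_{N,m}}{L_2(\m)}^2=\norm{\c_{\l,\m}\bigl(\sqrt{\l_{N,m}},\cdot\bigr)}{L_2(\m)}^2
=\sum_{n=0}^\infty (-1)^n \l_{N,m}^{\,n}\sum_{k=0}^n (n+1-2k)\,p_{2k}\,p_{2n+1-2k}.
\]
Likewise, for the Dirichlet norm I would put $z=\sqrt{\l_{D,m}}$ in \eqref{L2normsq}; now $z^{2n+2}=\l_{D,m}^{\,n+1}$, and \eqref{L2normsq} turns directly into
\[
\norm{f_{D,m}}{L_2(\m)}^2=\norm{\s_{\l,\m}\bigl(\sqrt{\l_{D,m}},\cdot\bigr)}{L_2(\m)}^2
=\sum_{n=0}^\infty (-1)^n \l_{D,m}^{\,n+1}\sum_{k=0}^{n+1} (n+1-2k)\,q_{2k+1}\,q_{2n+2-2k}.
\]

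There is no genuine obstacle here: the corollary is a pure specialization of Proposition~\ref{L2norm}, and all convergence issues were already handled in the proof of that proposition (the series there converge for every $z\in\R$, hence in particular at $z=\sqrt{\l_{N,m}}$ and $z=\sqrt{\l_{D,m}}$). The only points worth a brief word are that the substituted values of $z$ are real and that, by the remark following Proposition~\ref{pro:dirichleteigenvalues}, the normalization of $f_{D,m}$ and $f_{N,m}$ used here is exactly the one coming from $\c_{\l,\m}$ and $\s_{\l,\m}$, so that no extra multiplicative constant intervenes in the identification of $\norm{f_{N,m}}{L_2(\m)}^2$ and $\norm{f_{D,m}}{L_2(\m)}^2$ with the series above.
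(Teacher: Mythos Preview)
Your proposal is correct and follows exactly the paper's own approach: the corollary is obtained by substituting $z=\sqrt{\l_{N,m}}$ and $z=\sqrt{\l_{D,m}}$ into the two formulas of Proposition~\ref{L2norm}. Your additional remarks on convergence and on the chosen normalization of the eigenfunctions are accurate and only make explicit what the paper leaves implicit.
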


\section{A trigonometric identity}
\label{sec:trigind}

As in the previous section, we consider an atomless Borel probability measure $\m$ on $[0,1]$. We prove a formula that links the functions $\c_{\l,\m}, \c_{\m,\l}, \s_{\m,\l}$, and $\s_{\l,\m}$ generalizing the trigonometric identity $\sin^2 + \cos^2 = 1$. For this we need the following lemma.

\begin{lem}\label{lem:intqp}
	For $k, n \in \N$ with $k\leq n$ and for all $x\in [0,1]$ we have
	\[ \int_0^x q_{2k-1}(t) \, p_{2n-2k}(t) \, d\m(t) = \sum_{j=0}^{2k-1} (-1)^{j+1} q_j(x) \, p_{2n-j}(x).\]
\end{lem}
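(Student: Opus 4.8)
The statement is the exact analogue of Lemma~\ref{lem:intpp}, so I would prove it the same way: by induction on $k$, using Lemma~\ref{parts} twice in the inductive step — once with respect to $\m$ and once (in its rearranged form) with respect to $\l$ — to reduce the integral $\int_0^x q_{2k+1}(t)\,p_{2n-2k-2}(t)\,d\m(t)$ to $\int_0^x q_{2k-1}(t)\,p_{2n-2k}(t)\,d\m(t)$, i.e.\ to the same type of integral with index $k$ in place of $k+1$. Throughout, the relevant membership facts are $p_{\text{odd}},q_{\text{even}}\in H^1(\m)$, $p_{\text{even}},q_{\text{odd}}\in H^1(\l)$, together with $\frac{d}{d\m}p_{2m+1}=p_{2m}$, $\frac{d}{d\m}q_{2m}=q_{2m-1}$, $p_{2m}'=p_{2m-1}$, $q_{2m+1}'=q_{2m}$, all recorded after Definition~\ref{defpq}. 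One may assume $x>0$ when invoking Lemma~\ref{parts}; the case $x=0$ is trivial since every term vanishes.

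For the base case $k=1$ (any $n\ge1$) I would write $p_{2n-2}=\frac{d}{d\m}p_{2n-1}$ and apply Lemma~\ref{parts} with $f=p_{2n-1}\in H^1(\m)$, $g=q_1\in H^1(\l)$. Since $q_1(0)=0$ and $q_1'=q_0=1$,
\[\int_0^x q_1(t)\,p_{2n-2}(t)\,d\m(t)=q_1(x)\,p_{2n-1}(x)-\int_0^x p_{2n-1}(t)\,dt=q_1(x)\,p_{2n-1}(x)-p_{2n}(x),\]
which is precisely $\sum_{j=0}^{1}(-1)^{j+1}q_j(x)\,p_{2n-j}(x)$.

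For the inductive step, assume the formula for some $k\ge1$ and all $n\ge k$, and fix $n\ge k+1$ (so in particular $n\ge k$). First, applying Lemma~\ref{parts} with $f=p_{2n-2k-1}\in H^1(\m)$ (so $\frac{df}{d\m}=p_{2n-2k-2}$) and $g=q_{2k+1}\in H^1(\l)$ (so $g'=q_{2k}$), and using $q_{2k+1}(0)=0$,
\[\int_0^x q_{2k+1}(t)\,p_{2n-2k-2}(t)\,d\m(t)=q_{2k+1}(x)\,p_{2n-2k-1}(x)-\int_0^x q_{2k}(t)\,p_{2n-2k-1}(t)\,dt.\]
Next, rewriting Lemma~\ref{parts} as $\int_c^d f\,g'\,dt=f g\big|_c^d-\int_c^d \frac{df}{d\m}\,g\,d\m$ and applying it with $f=q_{2k}\in H^1(\m)$ (so $\frac{df}{d\m}=q_{2k-1}$) and $g=p_{2n-2k}\in H^1(\l)$ (so $g'=p_{2n-2k-1}$), and using $q_{2k}(0)=0$,
\[\int_0^x q_{2k}(t)\,p_{2n-2k-1}(t)\,dt=q_{2k}(x)\,p_{2n-2k}(x)-\int_0^x q_{2k-1}(t)\,p_{2n-2k}(t)\,d\m(t).\]
The last integral is the left-hand side of the claimed identity for index $k$ and this $n$, so the induction hypothesis turns it into $\sum_{j=0}^{2k-1}(-1)^{j+1}q_j(x)\,p_{2n-j}(x)$. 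Substituting back and noting that $q_{2k+1}(x)\,p_{2n-2k-1}(x)=(-1)^{2k+2}q_{2k+1}(x)\,p_{2n-(2k+1)}(x)$ and $-q_{2k}(x)\,p_{2n-2k}(x)=(-1)^{2k+1}q_{2k}(x)\,p_{2n-2k}(x)$ are exactly the $j=2k+1$ and $j=2k$ summands, the sum extends from $j\le2k-1$ to $j\le2k+1$, completing the induction.

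I do not expect a real obstacle here; the proof is bookkeeping. The only points requiring care are: tracking which of $p_n,q_n$ is a $\m$-derivative versus a $\l$-derivative so that Lemma~\ref{parts} is applied in the correct direction; checking that the boundary terms at $0$ vanish (they do, because in each application the surviving factor at $0$ is a $q$ of index $\ge1$, or $q_{2k}$ with $k\ge1$); and the degenerate case $k=n$, where $p_{2n-2k}=p_0\equiv1$ — the displays above still read correctly, since $p_0=\frac{d}{d\m}p_1$ and $p_2'=p_1$.
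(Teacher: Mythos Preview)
Your proof is correct and follows exactly the paper's approach: induction on $k$, with the base case $k=1$ and the inductive step each handled by applying Lemma~\ref{parts} (once for the base case, twice in the step) to peel off two boundary terms and reduce to the previous index. Your write-up is more explicit than the paper's about which function plays the role of $f\in H^1(\m)$ versus $g\in H^1(\l)$ and why the boundary terms at $0$ vanish, but the argument is identical.
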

\begin{proof}
	We prove this by induction on $k$. For $k=1$ and $n\geq 1$, we get by Lemma \ref{parts}
	\[ \int_0^x q_{1}(t) \, p_{2n-2}(t) \, d\m(t) = q_1(x)\,p_{2n-1}(x) - \int_0^x p_{2n-1}(t)\, dt = q_1(x)\,p_{2n-1}(x) - p_{2n}(x),\]
	and so the assertion holds. Now, take $k\in \N$, and assume that the assertion holds for $k$ and all $n\geq k$. Then, again by using Lemma \ref{parts}, we get
	\begin{align*}
	&\int_0^x q_{2k+1}(t) \, p_{2n-2k-2}(t) \, d\m(t)		= q_{2k+1}(x)\,p_{2n-2k-1}(x) - \int_0^x q_{2k}(t)\,p_{2n-2k-1}(t)\,dt \\
	&																										= q_{2k+1}(x)\,p_{2n-2k-1}(x) - q_{2k}(x)\,p_{2n-2k}(x) + \int_0^x q_{2k-1}(t)\,p_{2n-2k}(t)\,d\m(t).
	\end{align*}
	Thus, by the induction hypothesis, for all $n\geq k+1$,
	\[ \int_0^x q_{2k+1}(t) \, p_{2n-2k-2}(t) \, d\m(t)		= \sum_{j=0}^{2k+1} (-1)^{j+1} q_j(x) \, p_{2n-j}(x),\]
	which finishes the proof.
\end{proof}

\begin{cor}\label{sumpqgleich0}
	If we set $n=k$ in Lemma \ref{lem:intqp}, we get the formula
	\[ \sum_{j=0}^{2n} (-1)^j\,q_j(x)\,p_{2n-j}(x) =0,\]
	which holds for all $n\in\N$ and $x\in [0,1]$.
\end{cor}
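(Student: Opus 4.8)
The plan is to obtain the corollary by simply specializing Lemma \ref{lem:intqp} to $k=n$ and simplifying both sides. With this substitution the integrand on the left becomes $q_{2n-1}(t)\,p_{2n-2n}(t)=q_{2n-1}(t)\,p_0(t)=q_{2n-1}(t)$, since $p_0\equiv 1$. By Definition \ref{defpq}, because the index $2n$ is even, $\int_0^x q_{2n-1}(t)\,d\m(t)=q_{2n}(x)$. Hence Lemma \ref{lem:intqp} with $k=n$ reads
\[ q_{2n}(x)=\sum_{j=0}^{2n-1}(-1)^{j+1}\,q_j(x)\,p_{2n-j}(x)\qquad\text{for all }x\in[0,1].\]

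Next I would absorb the left-hand side into the alternating sum. Multiplying through by $-1$ gives $\sum_{j=0}^{2n-1}(-1)^{j}\,q_j(x)\,p_{2n-j}(x)=-q_{2n}(x)$. The term corresponding to $j=2n$ that is missing from this sum is $(-1)^{2n}q_{2n}(x)\,p_0(x)=q_{2n}(x)$, again using $p_0\equiv 1$; adding it to both sides yields
\[ \sum_{j=0}^{2n}(-1)^{j}\,q_j(x)\,p_{2n-j}(x)=0,\]
which is exactly the claimed formula.

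There is no substantive obstacle here; the only points requiring care are the sign flip when the boundary term $q_{2n}(x)$ is moved inside the alternating sum, and checking that the admissibility condition $k\le n$ in Lemma \ref{lem:intqp} is met with equality by the choice $k=n$, so that the identity is valid for every $n\in\N$, matching the range asserted in the corollary. One should also note in passing that the substitution $k=n$ makes the upper summation limit $2k-1=2n-1$ in Lemma \ref{lem:intqp} precisely one short of the range $0,\dots,2n$ appearing in the corollary, which is why appending the single term $j=2n$ completes the identity.
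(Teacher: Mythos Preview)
Your proof is correct and follows precisely the approach indicated by the paper: specialize Lemma \ref{lem:intqp} to $k=n$, evaluate the resulting integral as $q_{2n}(x)$ via Definition \ref{defpq}, and absorb this term into the alternating sum as the missing $j=2n$ summand. The paper itself gives no details beyond stating that the corollary follows from setting $n=k$, so you have simply made explicit the short computation that the paper leaves implicit.
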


With the above corollary we can prove the following theorem.

\begin{theo}\label{pythallg}
	For all $x\in[0,1]$ and $z\in \R$ holds
	\[ \c_{\m,\l}(z,x)\,\c_{\l,\m}(z,x) + \s_{\l,\m}(z,x)\, \s_{\m,\l}(z,x) = 1.\]
\end{theo}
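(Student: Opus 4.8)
The plan is to verify that the left-hand side, viewed as a function of $x$ for fixed $z$, is constant and equals its value at $x=0$. First I would compute the $\mu$-derivative of the function
\[
\Phi(x):=\c_{\m,\l}(z,x)\,\c_{\l,\m}(z,x) + \s_{\l,\m}(z,x)\, \s_{\m,\l}(z,x).
\]
All four factors lie in the appropriate $H^1$ spaces by Lemma \ref{diffrules}, and the product rule for the $\mu$-derivative (which follows from Lemma \ref{parts}, i.e.\ integration by parts) applies to products of an $H^1(\m)$-function with an $H^1(\l)$-function. Using the differentiation rules $\frac{d}{d\m}\c_{\m,\l}(z,\cdot)=-z\s_{\l,\m}(z,\cdot)$, $\frac{d}{d\m}\s_{\m,\l}(z,\cdot)=z\c_{\l,\m}(z,\cdot)$, together with $\c'_{\l,\m}=-z\s_{\m,\l}$ and $\s'_{\l,\m}=z\c_{\m,\l}$, I expect the two cross terms arising from differentiating $\Phi$ to cancel in pairs, giving $\frac{d}{d\m}\Phi\equiv 0$; hence $\Phi$ is constant, and since $\s_{\l,\m}(z,0)=\s_{\m,\l}(z,0)=0$ and $\c_{\m,\l}(z,0)=\c_{\l,\m}(z,0)=1$, the constant is $1$.

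The subtle point with the derivative approach is that $\Phi$ is a product of functions that are differentiable with respect to \emph{different} measures — $\c_{\m,\l}$ and $\s_{\m,\l}$ are in $H^1(\m)$ while $\c_{\l,\m}$ and $\s_{\l,\m}$ are in $H^1(\l)$ — so one must be careful that the mixed product rule is legitimate and that no boundary terms survive. To avoid this bookkeeping altogether, I would instead give a purely algebraic proof via the power-series expansions, which is the route Corollary \ref{sumpqgleich0} is clearly designed for. Multiplying out the four series using the multiplication formulas \eqref{formgg} and \eqref{formuu}, the coefficient of $z^{2n}$ in $\c_{\m,\l}(z,x)\c_{\l,\m}(z,x)$ is $(-1)^n\sum_{k=0}^n q_{2k}(x)\,p_{2n-2k}(x)$, and the coefficient of $z^{2n}$ in $\s_{\l,\m}(z,x)\s_{\m,\l}(z,x)$ is $(-1)^{n-1}\sum_{k=0}^{n-1} q_{2k+1}(x)\,p_{2n-1-2k}(x)$ for $n\ge 1$ (and $0$ for $n=0$). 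Adding these, the coefficient of $z^0$ is $q_0(x)p_0(x)=1$, and for $n\ge 1$ the combined coefficient is exactly $(-1)^n\sum_{j=0}^{2n}(-1)^j q_j(x)\,p_{2n-j}(x)$, which vanishes by Corollary \ref{sumpqgleich0}.

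Therefore $\Phi(x)=1$ for all $x\in[0,1]$ and all $z\in\R$. The main thing to get right is the index accounting in the second product — lining up the odd-indexed series so that the two sums $\sum_{k=0}^n q_{2k}p_{2n-2k}$ (even $j$) and $\sum_{k=1}^{n} q_{2k-1}p_{2n+1-2k}$ (odd $j$) reassemble into the single alternating sum $\sum_{j=0}^{2n}(-1)^j q_j(x)p_{2n-j}(x)$ of Corollary \ref{sumpqgleich0}. Once that telescoping identification is made, convergence is no issue since all series converge uniformly absolutely on $[0,1]$ by Lemma \ref{diffrules}, so the Cauchy products are justified and rearrangement is legitimate. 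I would present the algebraic proof as the main argument and perhaps remark that the differentiation argument gives an alternative.
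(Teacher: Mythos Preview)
Your algebraic argument is correct and is exactly the route the paper takes: multiply out the series using the Cauchy product, identify the coefficient of $z^{2n}$ for $n\geq 1$ as $(-1)^n\sum_{j=0}^{2n}(-1)^j q_j(x)\,p_{2n-j}(x)$, and invoke Corollary~\ref{sumpqgleich0}. Your side remark about the differentiation approach is a nice intuition, but the paper does not pursue it and you were right to make the series computation the main proof.
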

\begin{proof}
	Take $x\in [0,1]$ and $z\in \R$. Then, by Corollary \ref{sumpqgleich0},
	\begin{align*}
		&\c_{\m,\l}(z,x)\,\c_{\l,\m}(z,x) + \s_{\l,\m}(z,x)\, \s_{\m,\l}(z,x)	\\
		&	= \sum_{n=0}^\infty (-1)^n z^{2n} \sum_{k=0}^n q_{2k}(x)\,p_{2n-2k}(x) + \sum_{n=0}^\infty (-1)^n z^{2n+2}\sum_{k=0}^n q_{2k+1}(x)\,p_{2n+1-2k}(x)\\
		& = 1 + \sum_{n=1}^\infty (-1)^n z^{2n} \bigg[ \sum_{k=0}^n q_{2k}(x)\,p_{2n-2k}(x) - \sum_{k=0}^{n-1} q_{2k+1}(x)\, p_{2n-(2k+1)}(x)\bigg]\\
		& = 1 + \sum_{n=1}^\infty (-1)^n z^{2n} \sum_{k=0}^{2n} (-1)^k \, q_k(x) \, p_{2n-k}(x)\\
		& = 1.
	\end{align*}
\end{proof}

\section{Symmetric measures}
\label{sec:sym}

In this section we consider symmetric measures $\m$ on $[0,1]$, meaning that, additionally to being an atomless Borel probability measure, $\m$ shall satisfy
\[ \m \big( [0,x] \big) = \m \big( [1-x, 1] \big) \]
for all $ x \in [0,1] $. 

\begin{pro}\label{pro:symm}
	Let $\m$ be symmetric and let $x \in [0,1]$. Then, for $n \in \N_0$ holds
	\begin{equation}\label{eqsymmungp}
	p_{2n+1}(x) = \sum_{k=0}^n p_{2k+1}\, q_{2n-2k}(x) - \sum_{k=1}^n  p_{2k}\, p_{2n-2k+1}(x) - p_{2n+1}(1-x),
	\end{equation}
	and for $n \in \N$
	\begin{equation}\label{eqsymmgerp}
	p_{2n}(x)	= \sum_{k=0}^{n-1} p_{2k+1}\, q_{2n-2k-1}(x) - \sum_{k=1}^n p_{2k}\, p_{2n-2k}(x) + p_{2n}(1-x).
	\end{equation}
\end{pro}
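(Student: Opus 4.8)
The plan is to establish \eqref{eqsymmungp} and \eqref{eqsymmgerp} simultaneously, by an induction on $n$ that alternates between the two: \eqref{eqsymmgerp} for $n$ will be deduced from \eqref{eqsymmungp} for $n-1$, and then \eqref{eqsymmungp} for $n$ from \eqref{eqsymmgerp} for $n$. The base case is \eqref{eqsymmungp} with $n=0$, where all the finite sums are empty, so it reduces to $p_1(x)+p_1(1-x)=p_1$; since $p_1(y)=\m\bigl([0,y]\bigr)$ and $p_1=p_1(1)=1$, this is the identity $\m\bigl([0,x]\bigr)+\m\bigl([x,1]\bigr)=1$, which holds because $\m$ is non-atomic.

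Throughout I use two reflection rules. For $f\in H^1(\l)$ one has $\frac{d}{dx}f(1-x)=-f'(1-x)$. For $g\in H^1(\m)$, using that $\m$ is symmetric, $g(1-\cdot)$ again lies in $H^1(\m)$ with
\[ \frac{d}{d\m}\bigl[g(1-\cdot)\bigr](x)=-\frac{dg}{d\m}(1-x);\]
this follows by writing $g(1-x)=g(1)-\int_{(1-x,1]}\frac{dg}{d\m}\,d\m$ and performing the change of variables $t\mapsto 1-t$, which preserves $\m$. In particular $p_{2n+1}(1-\cdot)\in H^1(\m)$ and $p_{2n}(1-\cdot)\in H^1(\l)$, so the right-hand sides of \eqref{eqsymmungp} and \eqref{eqsymmgerp} lie in $H^1(\m)$ and $H^1(\l)$, respectively.

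For the inductive step, fix $n\ge 1$ and suppose \eqref{eqsymmungp} is known for $n-1$. Let $v_n(x)$ denote the right-hand side of \eqref{eqsymmgerp}. Differentiating term by term, using the relations following Definition~\ref{defpq} (with $p_0'=0$ and $\frac{d}{d\m}q_0=0$) together with the reflection rule for the last term, one finds that $v_n'$ equals precisely the right-hand side of \eqref{eqsymmungp} with $n$ replaced by $n-1$; by the induction hypothesis this is $p_{2n-1}=p_{2n}'$. Moreover $v_n(0)=0=p_{2n}(0)$: in \eqref{eqsymmgerp} every summand vanishes at $0$ except $-p_{2n}\,p_0(0)=-p_{2n}$, which is cancelled by $p_{2n}(1)=p_{2n}$. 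Since $v_n,p_{2n}\in H^1(\l)$ agree at $0$ and have the same $\l$-derivative, $v_n=p_{2n}$, i.e. \eqref{eqsymmgerp} holds for $n$. The passage from \eqref{eqsymmgerp} for $n$ to \eqref{eqsymmungp} for $n$ is completely parallel with $\m$ in place of $\l$: writing $u_n(x)$ for the right-hand side of \eqref{eqsymmungp}, one computes $\frac{d}{d\m}u_n$ from $\frac{d}{d\m}q_{2j}=q_{2j-1}$, $\frac{d}{d\m}p_{2j+1}=p_{2j}$ and the reflection rule, obtaining the right-hand side of \eqref{eqsymmgerp} for $n$, which by the step just proved equals $p_{2n}=\frac{d}{d\m}p_{2n+1}$; and $u_n(0)=p_{2n+1}\,q_0(0)-p_{2n+1}(1)=0=p_{2n+1}(0)$, so $u_n=p_{2n+1}$. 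Carrying out these two steps in turn for $n=1,2,3,\dots$ yields the proposition.

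The only point requiring genuine care is the reflection rule for the $\m$-derivative, i.e. verifying that $g(1-\cdot)\in H^1(\m)$ with the stated derivative — this is exactly where symmetry of $\m$ enters; everything else is bookkeeping with the differentiation rules. As a sanity check and as motivation for the form of the identities: multiplying \eqref{eqsymmungp} by $(-1)^n z^{2n+1}$ and summing, and likewise multiplying \eqref{eqsymmgerp} by $(-1)^n z^{2n}$ and summing, turns them into the reflection formulas $\s_{\m,\l}(z,1-x)=\sin_\m^N(z)\,\c_{\m,\l}(z,x)-\cos_\m^N(z)\,\s_{\m,\l}(z,x)$ and $\c_{\l,\m}(z,1-x)=\cos_\m^N(z)\,\c_{\l,\m}(z,x)+\sin_\m^N(z)\,\s_{\l,\m}(z,x)$.
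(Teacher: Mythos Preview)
Your proof is correct and follows essentially the same inductive scheme as the paper's, which integrates the known identity to obtain the next one rather than differentiating the candidate and matching values at $0$; the two viewpoints are dual and the bookkeeping is identical. One small slip: in the base case $n=0$ the first sum in \eqref{eqsymmungp} is not empty but equals $p_1\,q_0(x)=p_1$, which is exactly the $p_1$ on the right of the identity $p_1(x)+p_1(1-x)=p_1$ that you then (correctly, using symmetry of $\m$) verify.
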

\begin{proof}
	For $p_1(x)$ the formula reduces to $p_1(x) = p_1 - p_1(1-x)$. This holds since 
	\[ p_1(x) = \m \big( [0,x] \big) = \m \big( [1-x,1] \big) = \int_0^1 \, d\mu - \int_0^{1-x}\,d\mu= p_1 - p_1(1-x).\]
	Assume $p_{2n+1}(x)$ satisfies the above formula for some $n \in \N_0$. Then
	\begin{align*}
	p_{2n+2}(x)	&	= \int_0^x p_{2n+1}(t)\, dt \\
												&	= \sum_{k=0}^n  p_{2k+1} \int_0^x q_{2n-2k}(t)\, dt - \sum_{k=1}^n  p_{2k} \int_0^x p_{2n-2k+1}(t) \, dt - \int_0^x  p_{2n+1}(1-t)\, dt\\
												&	=  \sum_{k=0}^n p_{2k+1}\, q_{2n-2k+1}(x) - \sum_{k=1}^n p_{2k}\, p_{2n-2k+2}(x) - \int_{1-x}^1 p_{2n+1}(t)\, dt \\
												&	=  \sum_{k=0}^n  p_{2k+1}\, q_{2n-2k+1}(x) - \sum_{k=1}^n  p_{2k}\, p_{2n-2k+2}(x)- p_{2n+2}(1) + p_{2n+2}(1-x)\\
												&	=  \sum_{k=0}^n  p_{2k+1}\, q_{2n-2k+1}(x) - \sum_{k=1}^{n+1} p_{2k}\, p_{2n-2k+2}(x)+ p_{2n+2}(1-x).
	\end{align*}
	Now, let the assertion be true for some $2n$, $n\in \N$. Since $\mu$ is symmetric, we have that $d\mu (t) = d \mu (1-t)$. Thus,
	\begin{align*}
	p_{2n+1}(x)	&	= \int_0^x p_{2n}(t)\, d\mu(t) \\
													&	= \sum_{k=0}^{n-1} p_{2k+1} \int_0^x q_{2n-2k-1}(t)\, d\mu(t) - \sum_{k=1}^n p_{2k} \int_0^x p_{2n-2k}(t) \, d\mu(t) + \int_0^x p_{2n}(1-t)\, d\mu(t)\\
													&	=  \sum_{k=0}^{n-1}p_{2k+1}\, q_{2n-2k}(x) - \sum_{k=1}^n p_{2k}\, p_{2n-2k+1}(x) + \int_{1-x}^1 p_{2n}(t)\, d\mu(t)\\
													&	=  \sum_{k=0}^{n-1}p_{2k+1}\, q_{2n-2k}(x)	- \sum_{k=1}^n p_{2k}\, p_{2n-2k+1}(x)  + p_{2n+1}(1) - p_{2n+1}(1-x)\\
													&	=  \sum_{k=0}^n  p_{2k+1}\, q_{2n-2k}(x)- \sum_{k=1}^n  p_{2k}\, p_{2n-2k+1}(x)  - p_{2n+1}(1-x).
	\end{align*}
\end{proof}

\begin{cor}\label{cor:sym}
	Let $\m$ be symmetric. Then, for $n\in \N$,
	\begin{equation} \label{symform1}
		\sum_{k=0}^{n} p_{2k}\, p_{2n-2k+1} = \sum_{k=0}^n p_{2k+1} \, q_{2n-2k}.
	\end{equation}
	%and
	%\begin{equation}\label{symform2}
	%		\sum_{k=0}^n p_{2k}\, p_{2n-2k} = \sum_{k=0}^{n-1}p_{2k+1} \, q_{2n-2k-1}.
	%\end{equation}
\end{cor}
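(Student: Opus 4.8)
The plan is to derive \eqref{symform1} directly from Proposition \ref{pro:symm} by evaluating the two given symmetry formulas at the endpoint $x=1$, where the reflection terms $p_{2n+1}(1-x)$ and $p_{2n}(1-x)$ collapse to $p_{2n+1}(0)=0$ and $p_{2n}(0)=0$ respectively. Setting $x=1$ in \eqref{eqsymmungp} gives
\[
p_{2n+1} = \sum_{k=0}^n p_{2k+1}\, q_{2n-2k} - \sum_{k=1}^n p_{2k}\, p_{2n-2k+1} - p_{2n+1}(0),
\]
and since $p_{2n+1}(0)=0$, rearranging yields
\[
\sum_{k=0}^n p_{2k+1}\, q_{2n-2k} = \sum_{k=1}^n p_{2k}\, p_{2n-2k+1} + p_{2n+1}.
\]
Now observe that the right-hand side is exactly $\sum_{k=0}^n p_{2k}\, p_{2n-2k+1}$, because the $k=0$ term of that sum is $p_0\, p_{2n+1} = p_{2n+1}$ (using $p_0=p_0(1)=1$). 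This immediately gives \eqref{symform1}.

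Alternatively, and as a consistency check, one can set $x=1$ in \eqref{eqsymmgerp}: with $p_{2n}(0)=0$ this reads $p_{2n} = \sum_{k=0}^{n-1} p_{2k+1}\, q_{2n-2k-1} - \sum_{k=1}^n p_{2k}\, p_{2n-2k}$, and since the $k=n$ term of the last sum is $p_{2n}\, p_0 = p_{2n}$, this rearranges to $\sum_{k=0}^{n-1} p_{2k+1}\, q_{2n-2k-1} = \sum_{k=0}^{n-1} p_{2k}\, p_{2n-2k}$, which is the same identity \eqref{symform1} with $n$ replaced by $n-1$ (and is vacuous for $n=1$). So the even-index formula gives nothing new beyond what the odd-index formula already yields.

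The only subtlety — and it is genuinely minor — is correctly matching the index ranges so that the leftover single term $p_{2n+1}$ from the right-hand side of the rearranged odd formula gets absorbed into the sum $\sum_{k=0}^n p_{2k}\, p_{2n-2k+1}$ as its $k=0$ summand, which requires recalling from Definition \ref{defpq} that $p_0(x)\equiv 1$, hence $p_0 = 1$. There is no real obstacle here: the corollary is purely a boundary evaluation of an already-proved pointwise identity together with bookkeeping of the summation limits.

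Thus I would write: \emph{Evaluate \eqref{eqsymmungp} at $x=1$. Since $p_{2n+1}(0)=0$, we obtain $p_{2n+1} = \sum_{k=0}^n p_{2k+1}\, q_{2n-2k} - \sum_{k=1}^n p_{2k}\, p_{2n-2k+1}$, and as $p_0=1$ the term $p_{2n+1}$ equals the $k=0$ summand of $\sum_{k=0}^n p_{2k}\, p_{2n-2k+1}$, so that $\sum_{k=0}^n p_{2k}\, p_{2n-2k+1} = \sum_{k=0}^n p_{2k+1}\, q_{2n-2k}$, as claimed.}
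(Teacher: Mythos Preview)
Your main argument is correct and is exactly the paper's proof: evaluate \eqref{eqsymmungp} at $x=1$, use $p_{2n+1}(0)=0$, and absorb the leftover $p_{2n+1}$ as the $k=0$ summand via $p_0=1$.

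Your ``consistency check'' paragraph, however, contains a slip. Setting $x=1$ in \eqref{eqsymmgerp} and absorbing the left-hand $p_{2n}$ as the $k=0$ term actually gives
\[
\sum_{k=0}^{n-1} p_{2k+1}\, q_{2n-2k-1} \;=\; \sum_{k=0}^{n} p_{2k}\, p_{2n-2k}
\]
(upper limit $n$ on the right, not $n-1$), and this is \emph{not} \eqref{symform1} with $n$ replaced by $n-1$: it pairs odd-indexed $p$'s with odd-indexed $q$'s and even-indexed $p$'s with even-indexed $p$'s, whereas \eqref{symform1} pairs even $p$ with odd $p$ and odd $p$ with even $q$. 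Since this aside plays no role in your actual proof, the proposal stands.
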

\begin{proof}
	This follows from Proposition \ref{pro:symm} by putting $x=1$ in \eqref{eqsymmungp}.
\end{proof}
\begin{rem}
	In the special case where $\m$ is the Lebesgue measure, the above formula reduces to $\sum\limits_{k=0}^n (-1)^k \binom{n}{k}=0$.
\end{rem}
\begin{cor}\label{cor:symm}
	Let $\m$ be symmetric. Then the following statements hold.
	\begin{enumerate}
		\item \label{eveq}
			$\ds p_{2n}=q_{2n}$ for all $n \in \N$.
		\item \label{cospeqcosq}
			$\cos_{\m}^N(z)=\cos_{\m}^D(z)$ for all $z\in \R$.
		\item \label{cospquadplsinpsinq}
			$\cos_{\m}^N(z)^2 + \sin_{\m}^N(z)\,\sin_{\m}^D(z) = 1$	for all $z\in \R$.
		\item \label{evrecit}
			We have the recursion formula
			\begin{equation}\label{evreceq}
			p_{2n} = \frac{1}{2} \sum_{k=1}^{2n-1} (-1)^{k+1}\, p_k\, q_{2n-k}.
			\end{equation}
	\end{enumerate}
\end{cor}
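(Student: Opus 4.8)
The plan is to prove the four parts in turn, with part~(i) carrying essentially all the work and (ii)--(iv) following quickly from (i) together with Theorem~\ref{pythallg} and Corollary~\ref{sumpqgleich0}.

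For (i) I would first observe that the recursion defining $q_n$ in Definition~\ref{defpq} is the one defining $p_n$ with the roles of $\m$ and the Lebesgue measure $\l$ interchanged, and that \emph{both} of these are symmetric measures on $[0,1]$. Hence the proof of Proposition~\ref{pro:symm} applies verbatim after interchanging every $p_j$ with $q_j$, yielding in particular the companion of \eqref{eqsymmgerp},
\[ q_{2n}(x) = \sum_{k=0}^{n-1} q_{2k+1}\, p_{2n-2k-1}(x) - \sum_{k=1}^{n} q_{2k}\, q_{2n-2k}(x) + q_{2n}(1-x). \]
Setting $x=1$ here and in \eqref{eqsymmgerp}, using $p_{2n}(0)=q_{2n}(0)=0$ and moving the $k=n$ term of the last sum (which equals $p_{2n}$, resp.\ $q_{2n}$) to the left, I obtain
\[ 2p_{2n} = \sum_{k=0}^{n-1} p_{2k+1}q_{2n-2k-1} - \sum_{k=1}^{n-1} p_{2k}p_{2n-2k}, \qquad 2q_{2n} = \sum_{k=0}^{n-1} q_{2k+1}p_{2n-2k-1} - \sum_{k=1}^{n-1} q_{2k}q_{2n-2k}. \]
Now I argue by strong induction on $n$: the case $n=1$ gives $2p_2=p_1q_1=1=q_1p_1=2q_2$, and in the inductive step the two ``$\sum p_{2k}p_{2n-2k}$'' versus ``$\sum q_{2k}q_{2n-2k}$'' sums agree termwise by the induction hypothesis, while the two ``$\sum p_{2k+1}q_{2n-2k-1}$'' versus ``$\sum q_{2k+1}p_{2n-2k-1}$'' sums agree after the reindexing $k\mapsto n-1-k$; hence $p_{2n}=q_{2n}$. (Alternatively and more conceptually, one may write $p_{2n}$ and $q_{2n}$ as $2n$-fold iterated integrals over the simplex $0\le t_1\le \cdots\le t_{2n}\le 1$, with $t_j$ integrated against $\m$ for odd $j$ and against $\l$ for even $j$ in the case of $p_{2n}$, and the opposite assignment for $q_{2n}$; the substitution $t_j\mapsto 1-t_{2n+1-j}$ leaves the simplex invariant and, by symmetry of $\m$ and $\l$, turns the first assignment into the second, so $p_{2n}=q_{2n}$ directly.)

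Given (i), part~(ii) is immediate: $\cos_\m^N(z)=\sum_{n\ge 0}(-1)^n p_{2n}z^{2n}$ and $\cos_\m^D(z)=\sum_{n\ge 0}(-1)^n q_{2n}z^{2n}$ have, by (i) together with $p_0=q_0=1$, the same coefficients, and both series converge for all $z$. Part~(iii) follows by evaluating the identity of Theorem~\ref{pythallg} at $x=1$, which reads $\cos_\m^D(z)\cos_\m^N(z)+\sin_\m^D(z)\sin_\m^N(z)=1$, and substituting $\cos_\m^D=\cos_\m^N$ from (ii). For part~(iv) I would evaluate Corollary~\ref{sumpqgleich0} at $x=1$, giving $\sum_{j=0}^{2n}(-1)^j q_j p_{2n-j}=0$; splitting off the $j=0$ and $j=2n$ terms, which contribute $p_{2n}$ and $q_{2n}$, and using $q_{2n}=p_{2n}$ from (i) gives $2p_{2n}=\sum_{j=1}^{2n-1}(-1)^{j+1}q_j p_{2n-j}$, and the reindexing $j\mapsto 2n-j$ rewrites the right-hand side as $\sum_{k=1}^{2n-1}(-1)^{k+1}p_k q_{2n-k}$, which is \eqref{evreceq}.

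The main obstacle is (i): one must be confident that the $p/q$ roles may be swapped in Proposition~\ref{pro:symm} without disturbing its proof (this only invokes symmetry of $\l$, which is automatic), and then carry out the elementary but slightly delicate induction and reindexing bookkeeping. Everything after that is formal manipulation of already-established identities.
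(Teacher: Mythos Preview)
Your proof is correct, and parts (ii)--(iv) match the paper's argument essentially verbatim. Part (i), however, proceeds along a slightly different path. The paper does not invoke the $q$-analogue of Proposition~\ref{pro:symm} at all; instead it sets $x=1$ in the \emph{odd} identity \eqref{eqsymmungp} to obtain the single relation \eqref{symform1},
\[
\sum_{k=0}^{n} p_{2k}\, p_{2n-2k+1} = \sum_{k=0}^n p_{2k+1}\, q_{2n-2k},
\]
and then reverses the order of summation on the right so that the $k=n$ terms isolate $p_{2n}$ on the left and $q_{2n}$ on the right, while the remaining $k\le n-1$ terms agree by the induction hypothesis. Your route instead uses the \emph{even} identity \eqref{eqsymmgerp} at $x=1$ together with its $p\leftrightarrow q$ companion, obtaining explicit formulas for $2p_{2n}$ and $2q_{2n}$ separately and matching them by induction plus a reindexing. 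Both inductions are equally short; the paper's is a touch more economical in that it avoids having to state the $q$-version of Proposition~\ref{pro:symm}, while yours has the minor advantage of yielding a closed formula for $p_{2n}$ along the way. Your parenthetical simplex/change-of-variables argument is a genuinely cleaner alternative that bypasses Proposition~\ref{pro:symm} entirely and proves (i) in one line; the paper does not mention it.
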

\begin{proof}
	We prove \ref{eveq} by induction. By putting $n=1$ in \eqref{symform1}, we find that 
	\[ p_3+ p_2 p_1 = p_1 q_2 + p_3,\]
	which implies $p_2=q_2$. Assume that $p_{2k}=q_{2k}$ for all $k$ smaller than some $n\in \N$, $n\geq 2$. We reverse the order of the summands in the second sum of \eqref{symform1} to get
	\[ \sum_{k=0}^{n-1} p_{2k}\,p_{2n-2k+1} + p_{2n}\,p_1 = \sum_{k=0}^{n-1} p_{2n-2k+1}\, q_{2k} + p_1\,q_{2n}.\]
	Now it follows from the induction hypothesis that $p_{2n}=q_{2n}$. Then, \ref{cospeqcosq} follows immediately and by Proposition \ref{pythallg}
	also \ref{cospquadplsinpsinq}.
	
	Clearly, \ref{evrecit} follows from \ref{eveq} and Corollary \ref{sumpqgleich0}.
\end{proof}

\begin{pro}\label{cpz1minx}
	Let $\m$ be symmetric. Then, for all $z\in\R$ and $x\in [0,1]$,
	\[ \c_{\l,\m}(z,1-x) = \cos_{\m}^N(z) \c_{\l,\m}(z,x) + \sin_{\m}^N(z) \s_{\l,\m}(z,x).\]
\end{pro}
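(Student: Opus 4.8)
The plan is to expand the right-hand side as a power series in $z$, collect the coefficient of $z^{2n}$, and show by induction on $n$ that it equals $p_{2n}(1-x)$, which is exactly the coefficient of $z^{2n}$ in $\c_{\l,\m}(z,1-x)$. The point of departure is formula \eqref{eqsymmgerp} of Proposition \ref{pro:symm}, which already identifies $p_{2n}(1-x)$ in terms of $p$'s evaluated at $1$, functions $q_{2n-2k-1}(x)$ and $p_{2n-2k}(x)$, and the term $p_{2n}(x)$ itself. So the substance of the theorem is a bookkeeping identity between the Cauchy-product expansion of $\cos_{\m}^N(z)\c_{\l,\m}(z,x) + \sin_{\m}^N(z)\s_{\l,\m}(z,x)$ and the right-hand side of \eqref{eqsymmgerp}.

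Concretely, using the multiplication formulas \eqref{formgg} and \eqref{formuu} (with the sign bookkeeping $(-1)^j(-1)^k = (-1)^{j+k}$), the coefficient of $(-1)^n z^{2n}$ in the right-hand side is
\[
\sum_{k=0}^n p_{2k}\,p_{2n-2k}(x) - \sum_{k=0}^{n-1} p_{2k+1}\,q_{2n-2k-1}(x),
\]
where the minus sign in front of the second sum comes from the extra $z^2$ carried by $\sin_{\m}^N$ times $\s_{\l,\m}$ (i.e. the $z^{2n}$ term of that product pairs $p_{2k+1}$ from $\sin_{\m}^N$ with $q_{2(n-k)-1}(x)$ from $\s_{\l,\m}$, and $(-1)^{k}(-1)^{n-k-1} = -(-1)^n$). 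Comparing this with \eqref{eqsymmgerp}, which reads
\[
p_{2n}(1-x) = p_{2n}(x) + \sum_{k=1}^n p_{2k}\,p_{2n-2k}(x) - \sum_{k=0}^{n-1} p_{2k+1}\,q_{2n-2k-1}(x),
\]
one sees the two expressions agree term for term: the $k=0$ summand $p_0\,p_{2n}(x) = p_{2n}(x)$ of the first sum in my coefficient is precisely the leftover $p_{2n}(x)$ in \eqref{eqsymmgerp}, and the remaining sums $\sum_{k=1}^n p_{2k}p_{2n-2k}(x)$ and $\sum_{k=0}^{n-1}p_{2k+1}q_{2n-2k-1}(x)$ match identically. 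The base case $n=0$ is $\c_{\l,\m}(z,1-x)$'s constant term $p_0(1-x)=1 = \cos_{\m}^N(z)\cdot 1 + 0$ evaluated coefficientwise, i.e. just $p_0 = 1$.

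The main thing to be careful about is the uniform absolute convergence of all three series on $[0,1]$ — but this is already granted by Lemma \ref{diffrules}, so the Cauchy products are legitimate and rearrangement of the double sums is justified. Thus the only real obstacle is getting the sign conventions and index shifts in \eqref{formgg}, \eqref{formuu} exactly right and lining them up against Proposition \ref{pro:symm}; there is no deeper difficulty. An alternative, perhaps cleaner route avoiding the explicit coefficient chase: let $g(x) := \cos_{\m}^N(z)\c_{\l,\m}(z,x) + \sin_{\m}^N(z)\s_{\l,\m}(z,x)$ and $h(x) := \c_{\l,\m}(z,1-x)$; using Lemma \ref{diffrules} and the chain-rule behaviour of $\m$-derivatives under $x\mapsto 1-x$ for a symmetric $\m$, check that both $g$ and $h$ solve $\Delta_{\m}u = -z^2 u$ with the same initial data $u(0)$ and $u'(0)$ (here $h(0) = \cos_{\m}^N(z)$, $h'(0) = -z\,(-\s_{\m,\l}(z,1)) \cdot(\cdots)$, which one matches to $g(0),g'(0)$ using $\c_{\l,\m}(z,0)=1$, $\c'_{\l,\m}(z,0)=0$, $\s_{\l,\m}(z,0)=0$, $\s'_{\l,\m}(z,0)=z$), and conclude $g=h$ by uniqueness of solutions to the initial value problem cited on p.~40 of \cite{freiberg:analytical}. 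I would present the power-series computation as the main proof since it is self-contained given Proposition \ref{pro:symm}, and mention the ODE-uniqueness argument as a remark.
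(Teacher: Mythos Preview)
Your proof is correct and essentially the same as the paper's. Both arguments reduce to the coefficient identity
\[
p_{2n}(1-x)=\sum_{k=0}^n p_{2k}\,p_{2n-2k}(x)-\sum_{k=0}^{n-1} p_{2k+1}\,q_{2n-2k-1}(x),
\]
obtained by rearranging \eqref{eqsymmgerp}, and then recognise the right-hand side as the Cauchy-product coefficient of $(-1)^n z^{2n}$ in $\cos_{\m}^N(z)\c_{\l,\m}(z,x)+\sin_{\m}^N(z)\s_{\l,\m}(z,x)$; the paper runs this from \eqref{eqsymmgerp} to the functional identity, you run it in the opposite direction, but the content is identical. (One small point: no induction on $n$ is actually used---it is a direct coefficient comparison for each $n$.)
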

\begin{proof}
	Rearranging \eqref{eqsymmgerp} gives
	\[ p_{2n}(1-x)	= \sum_{k=0}^n p_{2k}\, p_{2n-2k}(x) - \sum_{k=0}^{n-1} p_{2k+1}\, q_{2n-2k-1}(x).\]
	We multiply the equation with $(-1)^n z^{2n}$ and sum from $n=0$ to infinity to get
	\begin{align*}
	\c_{\l,\m}(z,1-x) 	& = \sum_{n=0}^\infty \sum_{k=0}^n (iz)^{2k} p_{2k}\cdot (iz)^{2n-2k} p_{2n-2k}(x) \\
							& \qquad\qquad - \sum_{n=1}^\infty \sum_{k=0}^{n-1} (iz)^{2k+1}p_{2k+1}\cdot(iz)^{2n-2k-1} q_{2n-2k-1}(x)\\
							& = \sum_{n=0}^\infty (-1)^n z^{2n} p_{2n} \cdot \sum_{k=0}^\infty (-1)^k z^{2k} p_{2k}(x)\\
							& \qquad\qquad + \sum_{n=1}^\infty (-1)^n z^{2n+1} p_{2n+1} \cdot \sum_{k=0}^\infty (-1)^k z^{2k+1} q_{2k+1}(x)\\
							& = \cos_{\m}^N(z) \c_{\l,\m}(z,x) + \sin_{\m}^N(z) \s_{\l,\m}(z,x).
	\end{align*}
\end{proof}

\begin{cor}
	Let $\m$ be symmetric. Then the Neumann eigenfunctions $f_{N,m}$ are either symmetric or antisymmetric, that is, either
	\[f_{N,m}(x)=f_{N,m}(1-x) \qquad \text{or} \qquad f_{N,m}(x) = - f_{N,m}(1-x)\]
	for all $x\in [0,1]$.
\end{cor}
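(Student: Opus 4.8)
The plan is to evaluate the identity in Proposition \ref{cpz1minx} at $z=\sqrt{\l_{N,m}}$ and use the characterization of Neumann eigenvalues from Proposition \ref{pro:neumanneigenvalues}. Recall that $f_{N,m}(x) = \c_{\l,\m}\bigl(\sqrt{\l_{N,m}},x\bigr)$ and that $\sqrt{\l_{N,m}}$ is by definition a zero of $\sin_{\m}^N$. Hence $\sin_{\m}^N\bigl(\sqrt{\l_{N,m}}\bigr)=0$, so Proposition \ref{cpz1minx} collapses to
\[ f_{N,m}(1-x) = \cos_{\m}^N\bigl(\sqrt{\l_{N,m}}\bigr)\, f_{N,m}(x) \qquad \text{for all } x\in[0,1].\]

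So it remains to show that $\cos_{\m}^N\bigl(\sqrt{\l_{N,m}}\bigr) \in \{1,-1\}$. First I would set $x=0$ in the displayed identity: since $f_{N,m}(0) = \c_{\l,\m}\bigl(\sqrt{\l_{N,m}},0\bigr) = p_0(0) = 1 \neq 0$ (only the $n=0$ term survives), we get $f_{N,m}(1) = \cos_{\m}^N\bigl(\sqrt{\l_{N,m}}\bigr)$, and also $f_{N,m}(1) = \c_{\l,\m}\bigl(\sqrt{\l_{N,m}},1\bigr) = \cos_{\m}^N\bigl(\sqrt{\l_{N,m}}\bigr)$, which is merely consistent and not yet enough. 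The clean way to pin down the constant is to apply the identity twice: replacing $x$ by $1-x$ gives $f_{N,m}(x) = \cos_{\m}^N\bigl(\sqrt{\l_{N,m}}\bigr)\, f_{N,m}(1-x)$, and substituting the first relation back in yields $f_{N,m}(x) = \cos_{\m}^N\bigl(\sqrt{\l_{N,m}}\bigr)^2\, f_{N,m}(x)$ for all $x$. Since $f_{N,m}$ is an eigenfunction it is not identically zero, so $\cos_{\m}^N\bigl(\sqrt{\l_{N,m}}\bigr)^2 = 1$, i.e. $\cos_{\m}^N\bigl(\sqrt{\l_{N,m}}\bigr) = \pm 1$. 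In the $+1$ case $f_{N,m}$ is symmetric; in the $-1$ case it is antisymmetric.

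The main obstacle — really the only subtlety — is making sure the reduction of Proposition \ref{cpz1minx} is legitimate and that $f_{N,m}\not\equiv 0$; both are immediate from the material already established (the series defining $\c_{\l,\m}$ converges by Lemma \ref{diffrules}, $\sqrt{\l_{N,m}}$ is a zero of $\sin_{\m}^N$ by Proposition \ref{pro:neumanneigenvalues}, and an eigenfunction is nonzero by definition). Alternatively, one could observe that $\cos_{\m}^N\bigl(\sqrt{\l_{N,m}}\bigr)^2 = 1$ follows directly from Corollary \ref{cor:symm}\ref{cospquadplsinpsinq} together with $\sin_{\m}^N\bigl(\sqrt{\l_{N,m}}\bigr)=0$, which perhaps reads even more cleanly; I would mention this as the one-line version. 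Either route finishes the proof.
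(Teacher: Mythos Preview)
Your proof is correct and follows essentially the same route as the paper: reduce Proposition \ref{cpz1minx} at $z=\sqrt{\l_{N,m}}$ using $\sin_{\m}^N(\sqrt{\l_{N,m}})=0$, and then conclude $\cos_{\m}^N(\sqrt{\l_{N,m}})=\pm 1$. The only difference is cosmetic: the paper goes straight to Corollary \ref{cor:symm}\ref{cospquadplsinpsinq} for this last step (your ``one-line version''), whereas you first derive it by iterating the reflection identity.
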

\begin{proof}
	Let $z^2$ be a Neumann eigenvalue. Then, by Proposition \ref{pro:neumanneigenvalues}, $\sin_{\m}^N(z)=0$ and hence, by Corollary \ref{cor:symm} \ref{cospquadplsinpsinq},
	$\abs{\cos_{\m}^N(z)}=1$. Thus, by Proposition \ref{cpz1minx}, we get
	\[ \c_{\l,\m}(z,1-x) = \pm \c_{\l,\m}(z,x).\]
	Since $\c_{\l,\m}(z,\cdot) = f_{N,m}$ for $z^2=\l_m$ the corollary is proved.
\end{proof}

Analogous to \eqref{eqsymmungp} there is a formula relating $q_{2n+1}(x)$ to $q_{2n+1}(1-x)$, namely
\begin{equation}
	q_{2n+1}(x)= \sum_{k=0}^n q_{2k+1}\, p_{2n-2k}(x) - \sum_{k=1}^n q_{2k}\,q_{2n+2k+1}(x) - q_{2n+1}(1-x).
\end{equation}
The proof is exactly like the proof of Proposition \ref{pro:symm}. As in the proof of Proposition \ref{cpz1minx}, we rearrange, multiply with $(-1)^n z^{2n+1}$, and sum up to get
\[\s_{\l,\m}(z,1-x)=\sin_{\m}^D(z) \c_{\l,\m}(z,x) - \cos_{\m}^D(z) \s_{\l,\m}(z,x).\]
If now $z^2$ is a Dirichlet eigenvalue, then $\sin_{\m}^D(z)=0$ and $\cos_{\m}^D(z)=\cos_{\m}^N(z)= \pm 1$ and it follows that
\[\s_{\l,\m}(z,1-x) = \mp \s_{\l,\m}(z,x).\]
Thus, we have the following proposition.
\begin{pro}
	Let $\m$ be symmetric. Then the Dirichlet eigenfunctions $f_{D,m}$ are either symmetric or antisymmetric, that is, either
	\[f_{D,m}(x)=f_{D,m}(1-x) \qquad \text{or} \qquad f_{D,m}(x) = - f_{D,m}(1-x)\]
	for all $x\in [0,1]$.
\end{pro}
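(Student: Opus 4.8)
The plan is to mirror, for Dirichlet data, the chain of results just carried out for the Neumann case (Proposition~\ref{pro:symm}, Corollary~\ref{cor:symm}, Proposition~\ref{cpz1minx}) and then specialize to a Dirichlet eigenvalue. First I would prove the reflection identity for $q_{2n+1}(x)$ recorded above, together with its even-index companion for $q_{2n}(x)$, by a single interleaved induction on $n$ exactly parallel to the proof of Proposition~\ref{pro:symm}: the base case $q_1(x) = \m([0,x]) = \m([1-x,1]) = q_1 - q_1(1-x)$ is immediate from symmetry of $\m$, and the inductive step integrates alternately $dt$ and $d\m(t)$, using $d\m(t)=d\m(1-t)$ in the latter case and the substitution $t\mapsto 1-t$ to rewrite $\int_0^x(\,\cdot\,)(1-t)$ as $\int_{1-x}^1(\,\cdot\,)$.

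Next I would pass to generating functions. Solving the $q_{2n+1}$-identity for the term not involving $1-x$, multiplying by $(-1)^n z^{2n+1}$, and summing over $n\geq 0$, the resulting double sums factor by the multiplication formulas \eqref{formgg}--\eqref{formuu} into products of the power series defining $\sin_\m^D(z)$, $\cos_\m^D(z)$, $\c_{\l,\m}(z,x)$, and $\s_{\l,\m}(z,x)$; the rearrangements are justified by the uniform absolute convergence established in Lemma~\ref{diffrules}. This yields the Dirichlet counterpart of Proposition~\ref{cpz1minx},
\[ \s_{\l,\m}(z,1-x) = \sin_\m^D(z)\,\c_{\l,\m}(z,x) - \cos_\m^D(z)\,\s_{\l,\m}(z,x). \]

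Finally I would set $z=\sqrt{\l_{D,m}}$. By Proposition~\ref{pro:dirichleteigenvalues} we have $\sin_\m^D(z)=0$, so the identity collapses to $\s_{\l,\m}(z,1-x) = -\cos_\m^D(z)\,\s_{\l,\m}(z,x)$; and by Corollary~\ref{cor:symm}\,\ref{cospeqcosq} we have $\cos_\m^D(z)=\cos_\m^N(z)$, while item~\ref{cospquadplsinpsinq} of the same corollary gives $\cos_\m^N(z)^2 = 1 - \sin_\m^N(z)\sin_\m^D(z) = 1$, hence $\cos_\m^D(z)=\pm 1$. Therefore $\s_{\l,\m}(\sqrt{\l_{D,m}},1-x) = \mp\,\s_{\l,\m}(\sqrt{\l_{D,m}},x)$, and since $f_{D,m} = \s_{\l,\m}(\sqrt{\l_{D,m}},\cdot)$ by Proposition~\ref{pro:dirichleteigenvalues}, $f_{D,m}$ is antisymmetric when $\cos_\m^D(\sqrt{\l_{D,m}})=1$ and symmetric when it equals $-1$. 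The only delicate point is the combinatorial bookkeeping in the induction for the $q$-reflection formula and in its term-by-term passage to the generating-function identity (matching the summation ranges after reindexing), but this runs entirely parallel to the already-proved $p$-case, so I expect no genuine obstacle.
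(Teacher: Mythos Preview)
Your approach is essentially identical to the paper's: derive the $q$-analogue of the reflection formula by the same interleaved induction as in Proposition~\ref{pro:symm}, sum to obtain $\s_{\l,\m}(z,1-x)=\sin_{\m}^D(z)\,\c_{\l,\m}(z,x)-\cos_{\m}^D(z)\,\s_{\l,\m}(z,x)$, and specialize to $z=\sqrt{\l_{D,m}}$ using Corollary~\ref{cor:symm}. One small slip to fix: in your base case you write $q_1(x)=\m([0,x])$, but by Definition~\ref{defpq} it is $p_1(x)=\m([0,x])$ while $q_1(x)=\int_0^x dt=x$; the identity $q_1(x)=q_1-q_1(1-x)$ is then just $x=1-(1-x)$ and requires no symmetry of $\m$ (symmetry enters first at the $d\m$-integration in the step from $q_1$ to $q_2$).
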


\section{Self-similar measures}
\label{sec:ss}

In this section we impose that the measure $\m$ has a self-similar structure. For definitions of the concept of iterated function systems and self-similar measures, see Hutchinson \cite{hutchinson:fractals}. For reasons of simplicity, we take an IFS consisting only of two mappings, but it should not raise considerable problems to generalize this to an arbitrary number.

Let $r_1$, $r_2$, $m_1$ and $m_2$ be positive numbers satisfying  $r_1+r_2 \leq 1$ and $m_1+m_2=1$. Let $\S=(S_1, S_2)$ be the IFS given by
\[ S_1(x)=r_1 x \qquad \text{and} \qquad S_2(x)=r_2 x + 1 - r_2, \qquad x\in[0,1].\]

By $K$ we denote the invariant set of $\S$  and by $\m$ its invariant measure with vector of weights $(m_1,m_2)$. 

In this case we are able to prove several properties of the functions $p_n(x)$ and $q_n(x)$ that resemble corresponding ones of $\frac{x^n}{n!}$. These we will employ to examine the Neumann and Dirichlet eigenfunctions and eigenvalues of $-\frac{d}{d\m} \frac{d}{dx}$. In particular, we will develop a recursion law for $p_n(1)$ and $q_n(1)$.

The self-similar structure of the measure can be used in integral transformations to receive derivation rules like the following.
\begin{lem}\label{lem1selfsim}
 Let $F\in H^1(\m)$ and $f=\frac{dF}{d\m}$. Then
\[ \frac{d}{d\m}F(r_1 x) = m_1 f(r_1 x)\]
and
\[ \frac{d}{d\m}F(1-r_2+r_2 x) = m_2 f(1-r_2+r_2 x).\]
\end{lem}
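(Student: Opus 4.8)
The plan is to use the definition of the $\m$-derivative directly and reduce each claimed identity to a change-of-variables computation in the defining integral. Recall that $F\in H^1(\m)$ with $f=\frac{dF}{d\m}$ means
\[ F(y) = F(0) + \int_0^y f\, d\m \qquad \text{for all } y\in[0,1]. \]
To prove the first identity, set $G(x) := F(r_1 x)$ and $g(x) := m_1 f(r_1 x)$; I want to show $G\in H^1(\m)$ with $\frac{dG}{d\m}=g$, i.e. that $G(x) = G(0) + \int_0^x g\, d\m$ for all $x\in[0,1]$. Since $G(0)=F(0)$, this amounts to
\[ \int_0^{r_1 x} f\, d\m = \int_0^x m_1\, f(r_1 t)\, d\m(t). \]

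The key step is the self-similarity of $\m$. The invariant measure satisfies $\m = m_1\, \m\circ S_1^{-1} + m_2\, \m\circ S_2^{-1}$, and since $S_1([0,1])=[0,r_1]$ and $S_2([0,1])=[1-r_2,1]$ with $m_1+m_2=1$, this gives, for any bounded Borel function $\varphi$ supported on $[0,r_1]$, the transformation rule $\int \varphi\, d\m = m_1 \int \varphi(r_1 t)\, d\m(t)$; more precisely $\int_{[0,r_1]} \varphi\, d\m = m_1\int_{[0,1]}\varphi(S_1 t)\, d\m(t)$. Applying this with $\varphi = f\cdot \mathbf{1}_{[0,r_1 x]}$ yields exactly the displayed equation, because $S_1 t = r_1 t \le r_1 x$ iff $t\le x$. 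This establishes that $G$ has $\m$-derivative $g$, and by the uniqueness of the $\m$-derivative in $L_2(\m)$ (noted after Definition \ref{def:derivative}) the first formula follows. One should also check that $g=m_1 f(r_1\cdot)\in L_2(\m)$, which again follows from the same change-of-variables applied to $|f|^2$.

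The second identity is handled identically, replacing $S_1$ by $S_2(x)=r_2 x + 1 - r_2 = 1 - r_2 + r_2 x$, which maps $[0,1]$ onto $[1-r_2,1]$. Here one writes $F(1-r_2+r_2 x) = F(1) - \int_{1-r_2+r_2 x}^1 f\, d\m$ (or equivalently works with $F(S_2 x) - F(S_2 0) = \int_{1-r_2}^{1-r_2+r_2 x} f\, d\m$), and applies the self-similarity identity $\int_{[1-r_2,1]}\varphi\, d\m = m_2\int_{[0,1]}\varphi(S_2 t)\, d\m(t)$ with $\varphi = f\cdot\mathbf{1}_{[1-r_2,\,1-r_2+r_2 x]}$; since $S_2 t \le 1-r_2+r_2 x$ iff $t\le x$, the integral becomes $m_2\int_0^x f(S_2 t)\, d\m(t)$, giving $\frac{d}{d\m}F(S_2 x) = m_2 f(S_2 x)$.

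The only genuine subtlety — the "main obstacle" — is being careful with endpoints and the precise form of the self-similarity of $\m$: one must make sure the push-forward decomposition is applied to intervals of the form $[0,r_1 x]$ and $[1-r_2+r_2 x, 1]$ without boundary overlap causing double-counting (harmless here since $\m$ is atomless, so single points have measure zero), and that the function $f$ being only an $L_2(\m)$-equivalence class is no problem because both sides of each identity are interpreted $\m$-a.e. and the change of variables respects $\m$-null sets. Everything else is routine substitution.
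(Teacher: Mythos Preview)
Your proof is correct and follows essentially the same route as the paper: both arguments write $F(S_i x)$ via the defining integral of the $\m$-derivative, invoke the self-similarity relation $\m = m_1\,(S_1\m) + m_2\,(S_2\m)$ restricted to the appropriate subinterval, and perform the change of variables $t\mapsto S_i t$ to obtain $F(S_i x) = F(S_i 0) + \int_0^x m_i f(S_i t)\,d\m(t)$. Your extra remarks about $g\in L_2(\m)$ and the harmlessness of endpoints (via atomlessness) are correct and simply make explicit what the paper leaves implicit.
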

\begin{proof}
	Since $F\in H^1(\m)$, it can be written as
	\[ F(r_1 x) = F(0) + \int_0^{r_1 x} f(t)\,d\m (t).\]
	The measure $\m$ is invariant with respect to $S_1$ and $S_2$ which means that
	\[ \m = m_1 (S_1 \m) + m_2 (S_2 \m).\]
	Consequently, if restricted to $[0,r_1]$, we have
	\[ \m = m_1 (S_1 \m),\]
	and hence
	\[ F(r_1 x) = F(0) + \int_0^{r_1 x} m_1 f(t)\,d(S_1\m)(t) = F(0) + \int_0^x m_1 f(r_1 t) \,d\m(t).\]
	Thus, the first assertion follows.
	
	Analogously, it follows that on $[1-r_2,1]$ we have
	\[ \m = m_2 (S_2 \m) \]
	and thus,
	\begin{align*}
	F(1-r_2+r_2 x)	&	= F(0)+\int_0^{1-r_2} f(t)\,d\m(t)+\int_{1-r_2}^{1-r_2+r_2x} f(t)\,d\m(t)\\
									&	= F(1-r_2)+\int_{1-r_2}^{1-r_2+r_2x} m_2 f(t)\,d(S_2\m)(t)\\
									& = F(1-r_2)+\int_0^x m_2 f(1-r_2+r_2t)\,d\m(t),
	\end{align*}
	which proves the second assertion.
\end{proof}

In the following proposition we present a formula that can be viewed as an analogue of the binomial theorem, adapted to the self-similar measure $\m$. It relates values on the left part of $K$, contained in $[0,r_1]$, to values on the right part, contained in $[1-r_2,1]$.

\begin{pro}\label{pro:pformel}
	For $x \in [0,1]$ and $n \in \N_0$,
	\begin{equation}
	\begin{split}
	p_{2n+1}(1-r_2 + r_2 x)	&= \sum_{i=0}^n p_{2i+1}(r_1) (\tfrac{r_2 m_2}{r_1 m_1})^{n-i} q_{2n-2i}(r_1 x) \\
	& + \sum_{i=0}^n p_{2i}(r_1)(\tfrac{r_2}{r_1})^{n-i}(\tfrac{m_2}{m_1})^{n-i+1} p_{2n-2i+1}(r_1 x) \\ 
	&  \hspace{-2.2cm} + [1-(r_1+r_2)] \sum_{i=0}^{n-1} p_{2i+1}(r_1)(\tfrac{r_2}{r_1})^{n-i-1} (\tfrac{m_2}{m_1})^{n-i}p_{2n-2i-1}(r_1 x),
	\end{split}
	\end{equation}
	where a sum from $0$ to $-1$ is regarded as zero, and, for $n \in \N$,
	\begin{equation}\label{gerpselfsim}
	\begin{split}
	p_{2n}(1-r_2 + r_2 x)	&	= \sum_{i=0}^n p_{2i}(r_1) (\tfrac{r_2 m_2}{r_1 m_1})^{n-i} p_{2n-2i}(r_1 x) \\
	&	+ \sum_{i=0}^{n-1} p_{2i+1}(r_1)(\tfrac{r_2}{r_1})^{n-i}(\tfrac{m_2}{m_1})^{n-i-1} q_{2n-2i-1}(r_1 x) \\
	& \hspace{-2.2cm} + [1-(r_1+r_2)] \sum_{i=0}^{n-1} p_{2i+1}(r_1) (\tfrac{r_2 m_2}{r_1 m_1})^{n-i-1}p_{2n-2i-2}(r_1 x).
	\end{split}
	\end{equation}
\end{pro}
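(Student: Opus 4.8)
The plan is to prove both formulas by induction on $n$, using Lemma \ref{lem1selfsim} to handle the self-similar structure together with the defining integral relations in Definition \ref{defpq}. The two formulas are intertwined: the formula for $p_{2n+1}$ on $[1-r_2,1]$ is obtained from the formula for $p_{2n}$ on $[1-r_2,1]$ by $\m$-integration, and the formula for $p_{2n+2}$ from that for $p_{2n+1}$ by Lebesgue integration, so I would set up a single simultaneous induction alternating between the even and odd cases. The base cases are $n=0$: for $p_1$ we have $p_1(1-r_2+r_2x)=\m([0,1-r_2+r_2x])=\m([0,1-r_2])+\m([1-r_2,1-r_2+r_2x])$, and by self-similarity $\m([0,1-r_2])=m_1$ and $\m([1-r_2,1-r_2+r_2x])=m_2\,\m([0,x])=m_2 q_0(r_1x)\cdot$(a trivial factor), which should match $p_1(r_1)(\tfrac{r_2m_2}{r_1m_1})^0 q_0(r_1x)=\m([0,r_1])=m_1$ plus the remaining terms; one checks the bookkeeping directly. (For $p_0$ there is nothing to prove since $p_0\equiv 1$ and the formula for even $n$ starts at $n=1$.)

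For the inductive step I would proceed as follows. Suppose the formula for $p_{2n}$ holds. Then
\[
p_{2n+1}(1-r_2+r_2x)=\int_0^{1-r_2+r_2x} p_{2n}(t)\,d\m(t)
= \int_0^{1-r_2} p_{2n}(t)\,d\m(t) + \int_{1-r_2}^{1-r_2+r_2x} p_{2n}(t)\,d\m(t).
\]
The first integral is the constant $p_{2n+1}(1-r_2)$, which is itself the value of the claimed formula at $x=0$ (using $q_j(0)=p_j(0)=0$ for $j\ge1$ and $p_0(0)=1$); the second integral, by the $S_2$-invariance statement in Lemma \ref{lem1selfsim}, equals $m_2\int_0^x p_{2n}(1-r_2+r_2t)\,d\m(t)$, into which I substitute the induction hypothesis for $p_{2n}$ and integrate term by term. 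Integrating $p_{2n-2i}(r_1t)$ with respect to $d\m(t)$ over $[0,x]$ produces $\tfrac{1}{r_1 m_1}$ times $p_{2n-2i+1}(r_1x)$ by the first identity in Lemma \ref{lem1selfsim} (applied in the direction that turns $d\m(t)$-integration of a function of $r_1 t$ into $\tfrac{1}{m_1}$ times the $\m$-primitive evaluated at $r_1x$), and similarly $q_{2n-2i-1}(r_1t)$ integrates to $\tfrac{1}{r_1}q_{2n-2i}(r_1x)$ by plain substitution $t\mapsto r_1t$ inside the Lebesgue integral. Collecting the powers of $\tfrac{r_2m_2}{r_1m_1}$, $\tfrac{r_2}{r_1}$, $\tfrac{m_2}{m_1}$ and the factor $1-(r_1+r_2)$ and re-indexing the sums should reproduce exactly the three sums in the odd formula. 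The passage from $p_{2n+1}$ to $p_{2n+2}$ is the same but easier, since it only involves Lebesgue integration: $p_{2n+2}(1-r_2+r_2x)=\int_0^{1-r_2+r_2x}p_{2n+1}(t)\,dt$ splits as before, the second piece becomes $r_2\int_0^x p_{2n+1}(1-r_2+r_2t)\,dt$, and each $q_{2n-2i}(r_1t)$ integrates in $dt$ to $\tfrac{1}{r_1}q_{2n-2i+1}(r_1x)$ while each $p_{2n-2i+1}(r_1t)$ integrates (via $S_1$-self-similarity, i.e. the first display of Lemma \ref{lem1selfsim}) to $\tfrac{1}{r_1}p_{2n-2i+2}(r_1x)$.

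The main obstacle is purely combinatorial rather than conceptual: keeping track of the three families of terms under two different integrations, making sure the shifted index ranges still start and end correctly (in particular the ``sum from $0$ to $-1$ is zero'' convention, and the appearance/disappearance of the $i=n$ term and of the $[1-(r_1+r_2)]$ term), and verifying that the constant term $p_{m}(1-r_2)$ that splits off at each step coincides with the $x=0$ specialization of the claimed right-hand side so that it can be reabsorbed. I would organize this by first proving the even formula \eqref{gerpselfsim} (treating the constant-of-integration term as the $x=0$ case of the odd formula, which is available by the alternating induction), then deriving the odd formula, so that each step only has to match coefficients against a formula already in hand.
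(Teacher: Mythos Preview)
Your induction scheme is exactly the paper's: alternate between the odd and even formulas, split the integral over $[0,1-r_2+r_2x]$, and integrate the induction hypothesis term by term using the self-similarity of $\m$ on the $[1-r_2,1-r_2+r_2x]$ piece. The only organisational difference is that the paper splits into \emph{three} pieces $[0,r_1]$, $[r_1,1-r_2]$, $[1-r_2,1-r_2+r_2x]$ rather than your two, which makes the origin of the $[1-(r_1+r_2)]$-term explicit: in the Lebesgue step it is $\int_{r_1}^{1-r_2}p_{2n+1}(t)\,dt=[1-(r_1+r_2)]\,p_{2n+1}(r_1)$ (since $p_{2n+1}$ is constant on the gap), and in the $\m$-step it vanishes. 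Your observation that the constant $p_m(1-r_2)$ equals the $x=0$ value of the target formula is equivalent, but relegates this provenance to a side computation.

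Be careful, however, with the integration factors you quote, because several of them are swapped between the two steps. In the even-to-odd step \emph{all} integrals are with respect to $\m$, so by Lemma~\ref{lem1selfsim}
\[
\int_0^x p_{2j}(r_1t)\,d\m(t)=\tfrac{1}{m_1}\,p_{2j+1}(r_1x)\quad\text{(not }\tfrac{1}{r_1m_1}\text{)},\qquad
\int_0^x q_{2j-1}(r_1t)\,d\m(t)=\tfrac{1}{m_1}\,q_{2j}(r_1x),
\]
and the second of these is not ``plain Lebesgue substitution'' but again Lemma~\ref{lem1selfsim}. Conversely, in the odd-to-even step the integrals are Lebesgue, so $\int_0^x p_{2j+1}(r_1t)\,dt=\tfrac{1}{r_1}\,p_{2j+2}(r_1x)$ is indeed plain substitution and Lemma~\ref{lem1selfsim} plays no role there. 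Once you straighten out which measure you are integrating against at each stage, the coefficients line up and your plan goes through verbatim.
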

\begin{rem}
	If $r_1=m_1$ and $r_2=m_2$ and $r_1+r_2=1$ (and hence, $\m$ is the Lebesgue measure), the above formulas reduce to 
	\[ \Big(r_1+r_2 x\Big)^n=\sum_{i=0}^n \binom{n}{i} \, r_1^i \, (r_2 x)^{n-i}, \qquad  n \in \N.\]
\end{rem}
\begin{proof}
	We prove the proposition by induction. As seen in the proof of Lemma \ref{lem1selfsim} we have $\m= m_1 (S_1\m)$ on $[0,r_1]$ and
	$\m = m_2 (S_2 \m) $ on $[1-r_2,1]$.	
	Therefore, 
	\begin{align*}
	p_1(1-r_2 + r_2 x)	&	= \int_0^{1-r_2+ r_2 x} d\m		= \int_0^{r_1} d\m + \int_{1-r_2}^{1 - r_2 + r_2 x}d\m \\
											& = p_1(r_1) + m_2 \int_{1-r_2}^{1 - r_2 + r_2 x} d(S_2\m)  = p_1(r_1) + m_2 \int_0^x d\m \\
											& = p_1(r_1) + m_2 \int_0^{r_1 x} d(S_1\m) = p_1(r_1) + \frac{m_2}{m_1} \int_0^{r_1 x} d\m \\ 
											& = p_1(r_1) + \frac{m_2}{m_1} p_1(r_1 x),
	\end{align*}
	which proves the assertion for $p_1$.
	
	Assume that the formula for $p_{2n+1}$ holds for some $n\in \N_0$. Then
	\begin{align*}
		& p_{2n+2}(1-r_2+ r_2 x)	= \int_0^{r_1} p_{2n+1}(t)\,dt+\int_{r_1}^{1-r_2} p_{2n+1}(t)\,dt + \int_{1-r_2}^{1-r_2+ r_2 x} p_{2n+1}(t)\,dt\\
		& \quad = p_{2n+2}(r_1) + [1-(r_1+r_2)]p_{2n+1}(r_1) + r_2 \int_0^x p_{2n+1}(1-r_2+ r_2 t)\,dt.
	\end{align*}
	Applying the induction hypothesis, we get
	\begin{align*}
	& p_{2n+2}(1-r_2+ r_2 x) = p_{2n+2}(r_1) + [1-(r_1+r_2)]p_{2n+1}(r_1)\\
	& \qquad + \sum_{i=0}^n p_{2i+1}(r_1) (\tfrac{r_2 m_2}{r_1 m_1})^{n-i} r_2 \int_0^x q_{2n-2i}  (r_1 t)\,dt \\
	& \qquad + \sum_{i=0}^n p_{2i}(r_1)(\tfrac{r_2}{r_1})^{n-i}(\tfrac{m_2}{m_1})^{n-i+1} r_2 \int_0^x p_{2n-2i+1}(r_1 t) \,dt \\ 
	& \qquad + [1-(r_1+r_2)] \sum_{i=0}^{n-1} p_{2i+1}(r_1)(\tfrac{r_2}{r_1})^{n-i-1} (\tfrac{m_2}{m_1})^{n-i} r_2 \int_0^x p_{2n-2i-1}(r_1 t)\, dt \\
	& = p_{2n+2}(r_1) + [1-(r_1+r_2)]p_{2n+1}(r_1) \\
	& \qquad + \sum_{i=0}^n p_{2i+1}(r_1) (\tfrac{r_2}{r_1})^{n-i+1}(\tfrac{m_2}{m_1})^{n-i} q_{2n-2i+1}(r_1 x) \\
	& \qquad + \sum_{i=0}^n p_{2i}(r_1) (\tfrac{r_2 m_2}{r_1 m_1})^{n-i+1} p_{2n-2i+2}(r_1 x) \\
	&	\qquad + [1-(r_1+r_2)] \sum_{i=0}^{n-1} p_{2i+1}(r_1) (\tfrac{r_2 m_2}{r_1 m_1})^{n-i}p_{2n-2i}(r_1 x)\\
	& = \sum_{i=0}^{n+1} p_{2i}(r_1) (\tfrac{r_2 m_2}{r_1 m_1})^{n-i+1} p_{2n-2i+2}(r_1 x)+\sum_{i=0}^n p_{2i+1}(r_1) (\tfrac{r_2}{r_1})^{n-i+1}(\tfrac{m_2}{m_1})^{n-i} q_{2n-2i+1}(r_1 x)\\
	& \quad + [1-(r_1+r_2)] \sum_{i=0}^{n} p_{2i+1}(r_1) (\tfrac{r_2 m_2}{r_1 m_1})^{n-i}p_{2n-2i}(r_1 x),
	\end{align*}
	which is the formula for $p_{2n+2}$.
	
	Furthermore, suppose that the assertion is true for $p_{2n}$ for some $n\in\N$. Then, transforming $\m$ as in the proof of the initial step
	and applying the induction hypothesis in the same way	as above,
	\begin{align*}
	& p_{2n+1}(1-r_2+r_2 x)		= \int_0^{r_1} p_{2n}(t)\,d\m(t)+\int_{r_1}^{1-r_2} p_{2n}(t)\,d\m(t) + \int_{1-r_2}^{1-r_2+r_2 x} p_{2n}(t)\,d\m(t)\\
	& = p_{2n+1}(r_1) + m_2 \int_0^{x} p_{2n}(1-r_2+ r_2 t)\,d\m(t)\\
	& = p_{2n+1}(r_1) +\sum_{i=0}^n p_{2i}(r_1) (\tfrac{r_2}{r_1})^{n-i} (\tfrac{m_2}{m_1})^{n-i+1} p_{2n-2i+1}(r_1 x) + \sum_{i=0}^{n-1} p_{2i+1}(r_1) (\tfrac{r_2m_2}{r_1 m_1})^{n-i} q_{2n-2i}(r_1 x) \\ 
	& \quad + [1-(r_1+r_2)]\sum_{i=0}^{n-1} p_{2i+1}(r_1) (\tfrac{r_2}{r_1})^{n-i-1}(\tfrac{m_2}{m_1})^{n-i}p_{2n-2i-1}(r_1 x)\\
	& =\sum_{i=0}^{n} p_{2i+1}(r_1) (\tfrac{r_2 m_2}{r_1 m_1})^{n-i} q_{2n-2i}(r_1 x) + \sum_{i=0}^n p_{2i}(r_1) (\tfrac{r_2}{r_1})^{n-i} (\tfrac{m_2}{m_1})^{n-i+1} p_{2n-2i+1}(r_1 x) \\
	& \quad +  [1-(r_1+r_2)] \sum_{i=0}^{n-1} p_{2i+1}(r_1)(\tfrac{r_2}{r_1})^{n-i-1} (\tfrac{m_2}{m_1})^{n-i}p_{2n-2i-1}(r_1 x),
	\end{align*}
	which is the formula for $p_{2n+1}$.
\end{proof}

Analogous formulas hold for the functions $q_n$.
\begin{pro}\label{pro:qformel}
	For $x \in [0,1]$ and $n \in \N_0$,
	\begin{equation}\label{ungerqselfsim}
	\begin{split}
	q_{2n+1}(1-r_2 + r_2 x)	&	= \sum_{i=0}^n q_{2i+1}(r_1) (\tfrac{r_2 m_2}{r_1 m_1})^{n-i} p_{2n-2i}(r_1 x) \\
	&	+ \sum_{i=0}^n q_{2i}(r_1)(\tfrac{r_2}{r_1})^{n-i+1}(\tfrac{m_2}{m_1})^{n-i} q_{2n-2i+1}(r_1 x) \\
	& \hspace{-2.2cm} + [1-(r_1+r_2)] \sum_{i=0}^{n} q_{2i}(r_1) (\tfrac{r_2 m_2}{r_1 m_1})^{n-i}p_{2n-2i}(r_1 x),
	\end{split}
	\end{equation}
	and, for $n \in \N$,
	\begin{equation}
	\begin{split}
	q_{2n}(1-r_2 + r_2 x)	&	= \sum_{i=0}^n q_{2i}(r_1) (\tfrac{r_2 m_2}{r_1 m_1})^{n-i} q_{2n-2i}(r_1 x) \\
	& + \sum_{i=0}^{n-1} q_{2i+1}(r_1)(\tfrac{r_2}{r_1})^{n-i-1}(\tfrac{m_2}{m_1})^{n-i} p_{2n-2i-1}(r_1 x) \\ 
	& \hspace{-2.2cm} + [1-(r_1+r_2)] \sum_{i=0}^{n-1} q_{2i}(r_1) (\tfrac{r_2}{r_1})^{n-i-1}(\tfrac{m_2}{m_1})^{n-i}p_{2n-2i-1}(r_1 x).
	\end{split}
	\end{equation}
\end{pro}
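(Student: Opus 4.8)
The plan is to mirror the proof of Proposition \ref{pro:pformel}, interchanging the roles of the Lebesgue measure $\l$ and $\m$ throughout, since $q_n$ and $p_n$ are built by the same iterated-integration scheme with these two measures swapped. I would run one induction along the chain $q_1, q_2, q_3,\ldots$: verify \eqref{ungerqselfsim} for $q_1$ by direct computation, then show that the formula for $q_{2n}$ implies the one for $q_{2n+1}$, and that the formula for $q_{2n+1}$ implies the one for $q_{2n+2}$. Three elementary facts power every step, exactly as in Proposition \ref{pro:pformel}. First, since $\supp\m = K \sub [0,r_1]\cup[1-r_2,1]$ and $\m$ is atomless, $\m$ gives no mass to the gap $(r_1,1-r_2)$; hence every $\m$-integrated function -- in the $q$-hierarchy these are the even-indexed $q_{2n}$ -- is constant on $[r_1,1-r_2]$, which is the source of the terms carrying the factor $1-(r_1+r_2)$. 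Second, as in the proof of Lemma \ref{lem1selfsim}, on $[1-r_2,1]$ one has $\m = m_2\,(S_2\m)$, giving $\int_{1-r_2}^{1-r_2+r_2x} h\, d\m = m_2\int_0^x h(1-r_2+r_2s)\, d\m(s)$, while for a Lebesgue integral the substitution $t=1-r_2+r_2s$ gives the same identity with $m_2$ replaced by $r_2$ and $d\m$ by $ds$. Third -- again by Lemma \ref{lem1selfsim} and the ordinary change of variables -- $\int_0^x g(r_1 s)\, d\m(s) = \frac{1}{m_1}G(r_1 x)$ whenever $\frac{dG}{d\m}=g$ and $G(0)=0$, and $\int_0^x f(r_1 s)\, ds = \frac{1}{r_1}F(r_1 x)$ whenever $F'=f$ and $F(0)=0$; here $G$ (resp. $F$) is simply the next function up in the $p$/$q$ hierarchy, so $\m$-integration raises an index by one and contributes a factor $1/m_1$, and Lebesgue integration raises an index by one and contributes $1/r_1$.

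Carrying this out: for the passage to $q_{2n+1}$ (odd index, hence Lebesgue integration) I split $q_{2n+1}(1-r_2+r_2x)$ as $\int_0^{r_1} + \int_{r_1}^{1-r_2} + \int_{1-r_2}^{1-r_2+r_2x}$ of $q_{2n}\, dt$; the first piece is $q_{2n+1}(r_1)$, the middle piece is $[1-(r_1+r_2)]\,q_{2n}(r_1)$ by constancy, and the last piece is $r_2\int_0^x q_{2n}(1-r_2+r_2s)\, ds$, into which I substitute the induction hypothesis for $q_{2n}$ and integrate the three resulting sums term by term, each term absorbing a factor $r_2/r_1$ together with an index shift of one. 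For the passage to $q_{2n+2}$ (even index, hence $\m$-integration) the middle integral vanishes because $\m((r_1,1-r_2))=0$, the first piece is $q_{2n+2}(r_1)$, and the remaining piece is $m_2\int_0^x q_{2n+1}(1-r_2+r_2s)\, d\m(s)$, into which I substitute the hypothesis for $q_{2n+1}$ and integrate term by term against $d\m$, now absorbing a factor $m_2/m_1$ per term. In both cases one then re-indexes the sums (shifting $i$, merging the freshly produced boundary terms such as $q_{2n+2}(r_1)$ or $[1-(r_1+r_2)]q_{2n}(r_1)$ into the shifted sums, and using the empty-sum convention exactly as in Proposition \ref{pro:pformel}) and collects equal powers of $r_2/r_1$ and $m_2/m_1$ to read off the claimed formula for the next function. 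The even-index formula and the odd-index formula \eqref{ungerqselfsim} thus come out together from the single alternating induction.

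The step is conceptually routine, and I would expect the only real work -- and the one place where care is needed -- to be the bookkeeping: tracking the exponents of $r_2/r_1$ and $m_2/m_1$ as each term picks up a factor $r_2$ or $m_2$ from the outer substitution and a factor $1/r_1$ or $1/m_1$ from the inner scaling, and merging the extra boundary terms into the shifted sums so that the summation ranges and the $i=n$ (index-zero) contributions emerge exactly as stated. Since this is precisely the computation performed for $p_n$ in the proof of Proposition \ref{pro:pformel} with $\m$ and $\l$ interchanged, I would either reproduce it in that form or simply note that the argument is completely analogous.
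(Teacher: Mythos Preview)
Your proposal is correct and is exactly the approach the paper takes: the paper's proof of Proposition \ref{pro:qformel} consists of a single sentence stating that the induction works analogously to that of Proposition \ref{pro:pformel}. Your write-up spells out precisely this analogy, correctly tracking that for the $q$-hierarchy the odd-indexed functions arise from Lebesgue integration (producing the gap term $[1-(r_1+r_2)]q_{2n}(r_1)$ and the outer factor $r_2$) while the even-indexed ones arise from $\m$-integration (so the gap integral vanishes and the outer factor is $m_2$).
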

\begin{proof}
	The proof works by induction analogously to that of Proposition \ref{pro:pformel}.
\end{proof}

We translate the formulas about the functions $p_n(x)$ and $q_n(x)$ into formulas about $\c_{\l,\m}(z,x)$ and $\s_{\l,\m}(z,x)$. In the Lebesgue case, these are the usual addition theorems for $\cos ( r_1 z + r_2 x z) $ and $\sin(r_1z + r_2 x z)$.
\begin{cor}
	Let $z\in \R$ and $x\in [0,1]$. With the abbreviation $\bar{z}:= \sqrt{\frac{r_2 m_2}{r_1 m_1}}z$ we get
	\begin{equation}\label{lrcpz}
	\begin{split}
		\c_{\l,\m}(z,1-r_2+r_2x)	&	= \c_{\l,\m}(z,r_1) \c_{\l,\m}(\bar{z}, r_1 x)-\sqrt{\tfrac{r_2m_1}{r_1m_2}}\s_{\m,\l}(z,r_1) \s_{\l,\m}(\bar{z}, r_1x)\\
											& \quad - [1-(r_1+r_2)]z\s_{\m,\l}(z,r_1)\c_{\l,\m}(\bar{z}, r_1 x)
	\end{split}
	\end{equation}
	and
	\begin{equation}\label{lrsqz}
	\begin{split}
	\s_{\l,\m}(z,1-r_2+r_2x)	&	= \s_{\l,\m}(z,r_1) \c_{\l,\m}(\bar{z}, r_1 x)+ \sqrt{\tfrac{r_2m_1}{r_1m_2}}\c_{\m,\l}(z,r_1) \s_{\l,\m}(\bar{z}, r_1 x)\\
										& \quad +[1-(r_1+r_2)]z\c_{\m,\l}(z,r_1)\c_{\l,\m}(\bar{z}, r_1 x).
	\end{split}
	\end{equation}
\end{cor}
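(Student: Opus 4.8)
The plan is to obtain \eqref{lrcpz} from the even recursion \eqref{gerpselfsim} in Proposition~\ref{pro:pformel} and \eqref{lrsqz} from the odd recursion \eqref{ungerqselfsim} in Proposition~\ref{pro:qformel}, by exactly the device used in the proof of Proposition~\ref{cpz1minx}: multiply the formula for $p_{2n}(1-r_2+r_2x)$ by $(-1)^n z^{2n}$ and sum over $n\in\N_0$ (respectively, multiply the formula for $q_{2n+1}(1-r_2+r_2x)$ by $(-1)^n z^{2n+1}$ and sum), then collect the resulting Cauchy-type double sums into products of the defining power series by means of the multiplication formulas \eqref{formgg}, \eqref{formgu} and \eqref{formuu}. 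All the rearrangements are legitimate because, by Lemma~\ref{convabsch}, every series that occurs converges absolutely and uniformly on $[0,1]$ for each fixed $z$, so the iterated sums may be reordered freely.

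For \eqref{lrcpz} I would write $\c_{\l,\m}(z,1-r_2+r_2x)=\sum_{n\ge 0}(-1)^n z^{2n}p_{2n}(1-r_2+r_2x)$ (the $n=0$ term being simply $p_0\equiv 1$, in accordance with \eqref{gerpselfsim}, which is stated for $n\in\N$) and insert \eqref{gerpselfsim}. In the first sum, pairing $(-1)^i z^{2i}p_{2i}(r_1)$ with $(-1)^{n-i}(\tfrac{r_2m_2}{r_1m_1})^{n-i}z^{2(n-i)}p_{2n-2i}(r_1x)$ and using $(\tfrac{r_2m_2}{r_1m_1})^{n-i}z^{2(n-i)}=\bar z^{2(n-i)}$ produces the Cauchy product $\c_{\l,\m}(z,r_1)\c_{\l,\m}(\bar z,r_1x)$. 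In the second sum I substitute $j=n-i-1$ and use $(\tfrac{r_2}{r_1})^{n-i}(\tfrac{m_2}{m_1})^{n-i-1}=\tfrac{r_2}{r_1}(\tfrac{r_2m_2}{r_1m_1})^{j}$; extracting an odd power of $\bar z$ peels off one factor $\sqrt{\tfrac{r_2m_2}{r_1m_1}}$, and since $\sqrt{\tfrac{r_2m_1}{r_1m_2}}\cdot\sqrt{\tfrac{r_2m_2}{r_1m_1}}=\tfrac{r_2}{r_1}$ this is exactly why the multiplicative constant in \eqref{lrcpz} is the complementary root $\sqrt{\tfrac{r_2m_1}{r_1m_2}}$; tracking the sign $(-1)^{i+j}=-(-1)^n$ then gives $-\sqrt{\tfrac{r_2m_1}{r_1m_2}}\s_{\m,\l}(z,r_1)\s_{\l,\m}(\bar z,r_1x)$. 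The third sum, with the same substitution $j=n-i-1$, collapses in the same manner to $-[1-(r_1+r_2)]z\s_{\m,\l}(z,r_1)\c_{\l,\m}(\bar z,r_1x)$, the solitary factor $z$ arising from the parity mismatch between a "$\s$"-type and a "$\c$"-type series.

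For \eqref{lrsqz} I would proceed identically, starting from $\s_{\l,\m}(z,1-r_2+r_2x)=\sum_{n\ge 0}(-1)^n z^{2n+1}q_{2n+1}(1-r_2+r_2x)$ and \eqref{ungerqselfsim} (where now the case $n=0$ is already included). With the substitution $j=n-i$ the three sums collapse respectively to $\s_{\l,\m}(z,r_1)\c_{\l,\m}(\bar z,r_1x)$, to $\sqrt{\tfrac{r_2m_1}{r_1m_2}}\c_{\m,\l}(z,r_1)\s_{\l,\m}(\bar z,r_1x)$ (using $(\tfrac{r_2}{r_1})^{n-i+1}(\tfrac{m_2}{m_1})^{n-i}=\tfrac{r_2}{r_1}(\tfrac{r_2m_2}{r_1m_1})^{n-i}$ and the same cancellation of square roots as above), and to $[1-(r_1+r_2)]z\c_{\m,\l}(z,r_1)\c_{\l,\m}(\bar z,r_1x)$; here all three signs come out $+$, because $n=i+j$ gives $(-1)^{i+j}=(-1)^n$.

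The only genuine bookkeeping hazard is keeping the two square-root constants straight: $\bar z$ carries $\sqrt{r_2m_2/(r_1m_1)}$ into the \emph{argument}, so every extraction of an odd power of $\bar z$ contributes one copy of that root, and cancelling it against the explicit $r_2/r_1$-type prefactors of Propositions~\ref{pro:pformel} and \ref{pro:qformel} forces the \emph{reciprocal} root $\sqrt{r_2m_1/(r_1m_2)}$ to appear as a coefficient. Everything else — the re-indexing $j=n-i$ or $j=n-i-1$, the parity count that produces or suppresses the overall minus sign, and the appearance of a lone $z$ whenever a "$\s$"-series multiplies a "$\c$"-series — is routine and is handled termwise via \eqref{formgg}, \eqref{formgu} and \eqref{formuu}.
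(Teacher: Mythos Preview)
Your proposal is correct and follows essentially the same route as the paper's proof: multiply \eqref{gerpselfsim} and \eqref{ungerqselfsim} by $(-1)^n z^{2n}$ and $(-1)^n z^{2n+1}$ respectively, sum over $n$, and use \eqref{formgg}--\eqref{formuu} to factor the resulting Cauchy products. Your explicit bookkeeping of the substitution $j=n-i-1$ (resp.\ $j=n-i$), the sign $(-1)^{i+j+1}=-(-1)^n$, and the identity $\tfrac{r_2}{r_1}\sqrt{\tfrac{r_1 m_1}{r_2 m_2}}=\sqrt{\tfrac{r_2 m_1}{r_1 m_2}}$ is more detailed than what the paper spells out, but the argument is the same.
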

\begin{proof}
	We prove \eqref{lrsqz}. We multiply \eqref{ungerqselfsim} with $(-1)^n z^{2n+1}=\frac{1}{i}(iz)^{2n+1}$, sum from $n=0$ to infinity and get
	\begin{align*}
	&\s_{\l,\m}(z,1-r_2+r_2x) = \frac{1}{i} \sum_{n=0}^\infty \sum_{k=0}^n (iz)^{2k+1} q_{2k+1}(r_1)\,\Big(i\sqrt{\tfrac{r_2m_2}{r_1m_1}}z\Big)^{2n-2k}p_{2n-2k}(r_1x)\\
	& \quad +\sqrt{\tfrac{r_2m_1}{r_1m_2}}\frac{1}{i}\sum_{n=0}^\infty \sum_{k=0}^n(iz)^{2k} q_{2k}(r_1)\,\Big(i\sqrt{\tfrac{r_2m_2}{r_1m_1}}z\Big)^{2n-2k+1}q_{2n-2k+1}(r_1x)\\
	&	\quad	 + [1-(r_1+r_2)]z\sum_{n=0}^\infty \sum_{k=0}^n (iz)^{2k}q_{2k}(r_1)\,\Big(i \sqrt{\tfrac{r_2m_2}{r_1m_1}}z\Big)^{2n-2k} p_{2n-2k}(r_1x)\\
	&  = \frac{1}{i} \bigg(\sum_{n=0}^\infty (iz)^{2n+1} q_{2n+1}(r_1)\bigg) \bigg( \sum_{k=0}^\infty \Big(i\sqrt{\tfrac{r_2m_2}{r_1m_1}}z\Big)^{2k}p_{2k}(r_1x)\bigg)\\
	& \quad +\sqrt{\tfrac{r_2m_1}{r_1m_2}}\frac{1}{i}\bigg(\sum_{n=0}^\infty (iz)^{2n} q_{2n}(r_1) \bigg)\bigg( \sum_{k=0}^\infty \Big(i\sqrt{\tfrac{r_2m_2}{r_1m_1}}z\Big)^{2k+1}q_{2k+1}(r_1x)\bigg) \\
	& \quad +  [1-(r_1+r_2)]z \bigg(\sum_{n=0}^\infty (iz)^{2n}q_{2n}(r_1)\bigg) \bigg(\sum_{k=0}^\infty \Big(i \sqrt{\tfrac{r_2m_2}{r_1m_1}}z\Big)^{2k} p_{2k}(r_1x)\bigg)\\
	&	 = \s_{\l,\m}(z,r_1) \c_{\l,\m}(\bar{z}, r_1 x)+\sqrt{\tfrac{r_2m_1}{r_1m_2}}\c_{\m,\l}(z,r_1) \s_{\l,\m}(\bar{z}, r_1x) + [1-(r_1+r_2)]z\c_{\m,\l}(z,r_1)\c_{\l,\m}(\bar{z}, r_1 x).
	\end{align*}
By multiplying \eqref{gerpselfsim} with $(-1)^n z^{2n}$ and summing up, \eqref{lrcpz} is proved in the same way.
\end{proof}

The following scaling properties hold that are a replacement of the property $\bigl(\frac{1}{2}x)^n = \frac{1}{2^n}x^n$  for $p_n$ and $q_n$.
\begin{pro}\label{pqscaling}
For $x\in[0,1]$ and $n\in \N_0$ we have
\begin{align*}
p_{2n+1}\big( r_1 x \big)		&	=	r_1^n m_1^{n+1} \, p_{2n+1}(x),	& q_{2n+1}\big( r_1 x \big)		&	=	r_1^{n+1} m_1^{n}\, q_{2n+1}(x),\\
\intertext{and, {for $n\in \N$,}}
p_{2n}\big( r_1 x \big)		&	=	(r_1 m_1)^{n}\, p_{2n}(x),	&	 q_{2n}\big( r_1 x \big)		&	=	(r_1 m_1)^{n}\, q_{2n}(x).
\end{align*}
\end{pro}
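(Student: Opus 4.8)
The plan is to establish all four identities by a single induction on $n$, using the recursive structure of Definition~\ref{defpq} together with the self-similarity of $\m$. The only ingredient I need beyond the definitions is the pair of substitution formulas
\[ \int_0^{r_1 x} g(t)\,dt = r_1 \int_0^x g(r_1 t)\,dt \qquad\text{and}\qquad \int_0^{r_1 x} g(t)\,d\m(t) = m_1 \int_0^x g(r_1 t)\,d\m(t), \]
valid for any $x\in[0,1]$ and any $g$ integrable against the Lebesgue measure, resp.\ against $\m$. The first is the ordinary change of variables; the second is precisely the manipulation performed in the proof of Lemma~\ref{lem1selfsim} (it uses that on $[0,r_1]$ one has $\m = m_1(S_1\m)$ with $S_1(t)=r_1 t$, and it is legitimate here because the upper limit $r_1 x$ never exceeds $r_1$).

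For the base case I would check $n=0$. Since $p_0\equiv q_0\equiv 1$, the even identities are trivial, and for the odd ones $p_1(r_1 x) = \int_0^{r_1 x} d\m = m_1\int_0^x d\m = m_1\,p_1(x)$ and $q_1(r_1 x) = \int_0^{r_1 x} dt = r_1\int_0^x dt = r_1\,q_1(x)$, as claimed.

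For the inductive step I would climb one index at a time, alternating between the two substitution formulas according to the parity dictated by Definition~\ref{defpq}. For the $p$-family: assuming $p_{2n}(r_1 x)=(r_1 m_1)^n p_{2n}(x)$, insert it into $p_{2n+1}(r_1 x)=\int_0^{r_1 x} p_{2n}(t)\,d\m(t)$ and use the $\m$-substitution to pull out the factor $m_1(r_1 m_1)^n = r_1^n m_1^{n+1}$, giving the odd-index formula; then assuming $p_{2n+1}(r_1 x)=r_1^n m_1^{n+1} p_{2n+1}(x)$, insert it into $p_{2n+2}(r_1 x)=\int_0^{r_1 x} p_{2n+1}(t)\,dt$ and use the Lebesgue substitution to pull out $r_1\cdot r_1^n m_1^{n+1} = (r_1 m_1)^{n+1}$, giving the next even-index formula. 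The $q$-family is handled identically with the two integrals interchanged: from $q_{2n}$ a Lebesgue integral produces the factor $r_1(r_1 m_1)^n = r_1^{n+1}m_1^n$ for $q_{2n+1}$, and from $q_{2n+1}$ a $\m$-integral produces $m_1\, r_1^{n+1}m_1^n = (r_1 m_1)^{n+1}$ for $q_{2n+2}$. Since the recursion for $p_n$ involves only $p$-values and that for $q_n$ only $q$-values, the two inductions run in parallel and independently.

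I do not expect a genuine obstacle; the proof is essentially bookkeeping. The only point deserving a word of care is ensuring that the self-similar substitution is applied on $[0,r_1]$ only --- which is automatic because the relevant integrals run up to $r_1 x \le r_1$ --- and keeping track of the exponents of $r_1$ and $m_1$ through the alternating steps. It is probably cleanest to record the two substitution formulas explicitly at the start (or cite the corresponding lines in the proof of Lemma~\ref{lem1selfsim}) and then present the four inductions side by side.
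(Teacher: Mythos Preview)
Your proposal is correct and follows essentially the same approach as the paper: induction on $n$, using the Lebesgue change of variables and the self-similar substitution $\m = m_1(S_1\m)$ on $[0,r_1]$ to step from each $p_k$ (resp.\ $q_k$) to $p_{k+1}$ (resp.\ $q_{k+1}$). The paper only spells out the $p$-case and says the $q$-case is analogous, so your write-up is in fact slightly more explicit.
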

\begin{proof}
	We prove the asserted property for $p_n$ by induction on $n\in \N$. Since $\m$ satisfies $\m (B)=m_1 (S_1\m)(B)$ for all Borel sets $B \sub [0,r_1 ]$, we have
	\[ p_1(r_1 x) = \int_0^{r_1 x} d\m = m_1 \int_0^{r_1 x} d(S_1 \m) = m_1 \int_0^x d \m =m_1 p_1(x).\]
	Suppose the assertion is true for $p_{2n+1}$ for some $n\in \N_0$. Then
	\[	p_{2n+2}(r_1 x)=\int_0^{r_1 x} p_{2n+1}(t)\,dt = r_1 \int_0^x p_{2n+1}(r_1 t)\,dt	= (r_1 m_1)^{n+1}p_{2n+2}(x).\]
	If we assume that the formula holds for $p_{2n}$ for some $n\in\N$, then, transforming $\m$ as above,
	\[ p_{2n+1}(r_1 x)=\int_0^{r_1 x} p_{2n}(t)\,d\m(t) = m_1 \int_0^x p_{2n}(r_1 t)\,d\m (t)= r_1^n m_1^{n+1} p_{2n+1}(x).\]
	The formula for $q_n$ is proved analogously.
	\end{proof}

Next, we deduce formulas corresponding to those in Proposition \ref{pqscaling} that relate values of $\c_{\l,\m}(z,\cdot)$ and $\s_{\l,\m}(z,\cdot)$ at $S_1(x)=r_1x$ to values of $\c_{\l,\m}\bigl( \sqrt{r_1m_1}z, \cdot \bigr)$ and $\s_{\l,\m}\bigl(\sqrt{r_1m_1}z, \cdot\bigr)$ at $x$.
\begin{pro}\label{cpzsqzS1}
	For all $x\in [0,1]$ and $z\in\R$ we have
	\begin{equation}\label{cpzS1}
	\c_{\l,\m}\bigl(z,S_1(x)\bigr) = \c_{\l,\m}\bigl( \sqrt{r_1m_1}z, x\bigr)
	\end{equation}
	and
	\begin{equation}\label{sqzS1}
	\s_{\l,\m}\big(z, S_1(x)\big) = \sqrt{\frac{r_1}{m_1}} \s_{\l,\m}\bigl( \sqrt{r_1m_1}z, x \bigr).
	\end{equation}
	Furthermore, we have
	\[ \s_{\m,\l}\bigl(z, S_1(x)\bigr) =  \sqrt{\frac{m_1}{r_1}} \s_{\m,\l}\bigl(\sqrt{r_1m_1}z,x\bigr)\]
	and
	\[ \c_{\m,\l}\bigl(z, S_1(x)\bigr) = \c_{\l,\m}\bigl( \sqrt{r_1m_1}z, x\bigr).\]
\end{pro}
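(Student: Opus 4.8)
The plan is to read off all four identities by substituting the scaling relations of Proposition~\ref{pqscaling} term by term into the defining power series of Definition~\ref{defi:sums}. By Lemma~\ref{diffrules} each of these series converges uniformly absolutely on $[0,1]$ for every fixed $z$, so reindexing and pulling constants through the summation require no further justification; the whole proof then reduces to bookkeeping of powers of $r_1$ and $m_1$.

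For \eqref{cpzS1} I would begin from $\c_{\l,\m}\bigl(z,S_1(x)\bigr)=\sum_{n=0}^{\infty}(-1)^n z^{2n}p_{2n}(r_1x)$ and insert $p_{2n}(r_1x)=(r_1m_1)^n p_{2n}(x)$, which (since $p_0\equiv 1$) also holds for $n=0$. Collecting $(r_1m_1)^n z^{2n}=\bigl(\sqrt{r_1m_1}\,z\bigr)^{2n}$ turns the series into $\c_{\l,\m}\bigl(\sqrt{r_1m_1}\,z,x\bigr)$. The identity for $\c_{\m,\l}\bigl(z,S_1(x)\bigr)$ is obtained by the verbatim computation with $q_{2n}$ in place of $p_{2n}$, using $q_{2n}(r_1x)=(r_1m_1)^n q_{2n}(x)$.

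For the two sine-type identities the only extra care needed is to split off the correct constant. Writing $q_{2n+1}(r_1x)=r_1^{n+1}m_1^{n}q_{2n+1}(x)=\sqrt{r_1/m_1}\,(r_1m_1)^{n+1/2}q_{2n+1}(x)$, the factor $z^{2n+1}(r_1m_1)^{n+1/2}=\bigl(\sqrt{r_1m_1}\,z\bigr)^{2n+1}$ reassembles $\s_{\l,\m}\bigl(\sqrt{r_1m_1}\,z,x\bigr)$ and leaves exactly the prefactor $\sqrt{r_1/m_1}$, which is \eqref{sqzS1}. Symmetrically, $p_{2n+1}(r_1x)=r_1^{n}m_1^{n+1}p_{2n+1}(x)=\sqrt{m_1/r_1}\,(r_1m_1)^{n+1/2}p_{2n+1}(x)$ gives the stated formula for $\s_{\m,\l}\bigl(z,S_1(x)\bigr)$.

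I do not expect any real obstacle here; the only points that deserve a moment's attention are that Proposition~\ref{pqscaling} states the even-index scalings only for $n\in\N$, so the $n=0$ terms ($p_0=q_0=1$) have to be checked separately (they match trivially), and that the half-integer powers $(r_1m_1)^{n+1/2}$ must be recombined correctly with $\sqrt{r_1/m_1}$ and $\sqrt{m_1/r_1}$ — i.e.\ $\sqrt{r_1/m_1}\cdot\sqrt{r_1m_1}=r_1$ and $\sqrt{m_1/r_1}\cdot\sqrt{r_1m_1}=m_1$ — which is precisely what makes the constants from Proposition~\ref{pqscaling} reappear in the final form.
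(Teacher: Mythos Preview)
Your proof is correct and matches the paper's approach: substitute the scaling relations of Proposition~\ref{pqscaling} into the defining series. The only cosmetic difference is that the paper handles the last two identities by differentiating the first two (via Lemma~\ref{diffrules}) rather than repeating the series computation, but both routes are immediate.
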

\begin{proof}
	With Proposition \ref{pqscaling} we get
	\[ \c_{\l,\m}(z,r_1 x) = \sum_{n=0}^\infty (-1)^n z^{2n} p_{2n}(r_1x) = \sum_{n=0}^\infty (-1)^n (\sqrt{r_1m_1}z)^{2n} p_{2n}(x)=\c_{\l,\m}\bigl( \sqrt{r_1m_1}z, x\bigr)\]
	and
	\begin{align*}
	\s_{\l,\m}(z,r_1 x) & = \sum_{n=0}^\infty (-1)^n z^{2n+1} q_{2n+1}(r_1x) = \sqrt{\frac{r_1}{m_1}}\sum_{n=0}^\infty (-1)^n (\sqrt{r_1m_1}z)^{2n+1}q_{2n+1}(x)\\
							 & = \sqrt{\frac{r_1}{m_1}} \s_{\l,\m}\bigl( \sqrt{r_1m_1}z, x \bigr).
	\end{align*}
	The other two equations are obtained by deriving.
\end{proof}

The counterparts of \eqref{cpzS1} and \eqref{sqzS1} are the following formulas for $\c_{\l,\m}\bigl(z,S_2(x)\bigr)$ and $\s_{\l,\m}\big(z,S_2(x)\big)$.
\begin{pro}\label{cpzsqzS2}
	For all $x\in [0,1]$ and $z\in\R$ we have
	\begin{equation}\label{cpzS2}
	\begin{split}
	\c_{\l,\m}\bigl(z,S_2(x)\bigr)	& = \cos_{\m}^N(\sqrt{r_1m_1}z) \c_{\l,\m}\bigl( \sqrt{r_2m_2}z, x\bigr) - \sqrt{\frac{r_2m_1}{r_1m_2}}\sin_{\m}^N(\sqrt{r_1m_1}z)
	\s_{\l,\m}\bigl( \sqrt{r_2m_2}z, x \bigr)\\
												& \quad - [1-(r_1+r_2)]\sqrt{\frac{m_1}{r_1}} z \sin_{\m}^N(\sqrt{r_1m_1}z) \c_{\l,\m}\bigl( \sqrt{r_2m_2}z, x\bigr)
	\end{split}
	\end{equation}
	and
	\begin{equation}\label{sqzS2}
	\begin{split}
	\s_{\l,\m}\big(z,S_2(x)\big)	&	= \sqrt{\frac{r_1}{m_1}} \sin_{\m}^D(\sqrt{r_1m_1}z)\c_{\l,\m}\bigl( \sqrt{r_2m_2}z, x\bigr)
														+ \sqrt{\frac{r_2}{m_2}}\cos_{\m}^D(\sqrt{r_1m_1}z) \s_{\l,\m}\bigl( \sqrt{r_2m_2}z, x \bigr)\\
												& \quad +[1-(r_1+r_2)]z\cos_{\m}^D(\sqrt{r_1m_1}z)\c_{\l,\m}\bigl( \sqrt{r_2m_2}z, x\bigr).
	\end{split}
	\end{equation}
	Furthermore, we have
	\begin{align*}
		\s_{\m,\l}\bigl(z,S_2(x)\bigr)	&	=\sqrt{\frac{m_1}{r_1}}\sin_{\m}^N(\sqrt{r_1m_1}z)\c_{\m,\l}(\sqrt{r_2 m_2}z, x)+
																			\sqrt{\frac{m_2}{r_2}}\cos_{\m}^N(\sqrt{r_1m_1}z) \s_{\m,\l}\bigl(\sqrt{r_2m_2}z,x\bigr)\\
													&	- [1-(r_1+r_2)]\sqrt{\frac{m_1m_2}{r_1r_2}}\, z \sin_{\m}^N(\sqrt{r_1 m_1}z) \s_{\m,\l}\bigl(\sqrt{r_2m_2}z,x\bigr)
	\end{align*}
	and
	\begin{align*}
		\c_{\m,\l}\bigl(z, S_2(x)\bigr) & = \cos_{\m}^D(\sqrt{r_1m_1}z)\c_{\m,\l}(\sqrt{r_2 m_2}z, x)-\sqrt{\frac{r_1 m_2}{r_2 m_1}}\sin_{\m}^D(\sqrt{r_1m_1}z)\s_{\m,\l}\bigl(\sqrt{r_2m_2}z,x\bigr)\\
													& - [1-(r_1+r_2)]\, z \sqrt{\frac{m_2}{r_2}}\cos_{\m}^D(\sqrt{r_1 m_1}z) \s_{\m,\l}\bigl(\sqrt{r_2m_2}z,x\bigr).\\
	\end{align*}
\end{pro}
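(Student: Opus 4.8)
The plan is to reduce everything to the two addition-type identities \eqref{lrcpz} and \eqref{lrsqz} established just above, combined with the $S_1$-scaling relations of Proposition \ref{cpzsqzS1}. The point of departure is that $S_2(x)=1-r_2+r_2x$, while $r_1=S_1(1)$ and $r_1x=S_1(x)$. Hence \eqref{lrcpz} already writes $\c_{\l,\m}(z,S_2(x))$ as a linear combination, with coefficients depending only on $\sqrt{r_1m_1}z$, of the ``outer'' factors $\c_{\l,\m}(z,S_1(1))$, $\s_{\m,\l}(z,S_1(1))$ and the ``inner'' factors $\c_{\l,\m}(\bar{z},S_1(x))$, $\s_{\l,\m}(\bar{z},S_1(x))$, where $\bar{z}=\sqrt{r_2m_2/(r_1m_1)}\,z$; likewise \eqref{lrsqz} does the same for $\s_{\l,\m}(z,S_2(x))$, now with $\s_{\l,\m}(z,S_1(1))$ and $\c_{\m,\l}(z,S_1(1))$ as the outer factors.

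First I would pass each of these building blocks through the $S_1$-scaling relations, namely those of Proposition \ref{cpzsqzS1} together with $\c_{\m,\l}(z,S_1(x))=\c_{\m,\l}(\sqrt{r_1m_1}z,x)$, which is read off from Proposition \ref{pqscaling} in exactly the same way. Evaluating at $1$ turns the outer factors into $\cos_{\m}^N(\sqrt{r_1m_1}z)$, $\sqrt{m_1/r_1}\,\sin_{\m}^N(\sqrt{r_1m_1}z)$, $\sqrt{r_1/m_1}\,\sin_{\m}^D(\sqrt{r_1m_1}z)$ and $\cos_{\m}^D(\sqrt{r_1m_1}z)$. For the inner factors the key cancellation is $\sqrt{r_1m_1}\,\bar{z}=\sqrt{r_2m_2}\,z$, so that $\c_{\l,\m}(\bar{z},S_1(x))=\c_{\l,\m}(\sqrt{r_2m_2}z,x)$ and $\s_{\l,\m}(\bar{z},S_1(x))=\sqrt{r_1/m_1}\,\s_{\l,\m}(\sqrt{r_2m_2}z,x)$. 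Substituting and collecting the resulting radicals — e.g. $\sqrt{r_2m_1/(r_1m_2)}\cdot\sqrt{m_1/r_1}\cdot\sqrt{r_1/m_1}=\sqrt{r_2m_1/(r_1m_2)}$ for the cross term of \eqref{cpzS2}, and $\sqrt{r_2m_1/(r_1m_2)}\cdot\sqrt{r_1/m_1}=\sqrt{r_2/m_2}$ for the corresponding term of \eqref{sqzS2} — yields exactly \eqref{cpzS2} and \eqref{sqzS2}.

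For the remaining two identities I would differentiate \eqref{cpzS2} and \eqref{sqzS2} in $x$, just as the proof of Proposition \ref{cpzsqzS1} disposes of its last two formulas. Since $S_2$ is affine, the chain rule gives $\frac{d}{dx}\c_{\l,\m}(z,S_2(x))=r_2\,\c'_{\l,\m}(z,S_2(x))=-r_2z\,\s_{\m,\l}(z,S_2(x))$ and $\frac{d}{dx}\s_{\l,\m}(z,S_2(x))=r_2z\,\c_{\m,\l}(z,S_2(x))$ by Lemma \ref{diffrules}. Differentiating the right-hand sides of \eqref{cpzS2} and \eqref{sqzS2} term by term with the same rules, now applied at the argument $\sqrt{r_2m_2}z$, and then dividing by $-r_2z$ and $r_2z$ respectively, produces the formulas for $\s_{\m,\l}(z,S_2(x))$ and $\c_{\m,\l}(z,S_2(x))$; the prefactors there come from $\sqrt{r_2m_2}/r_2=\sqrt{m_2/r_2}$ and the same radical arithmetic as before.

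I do not expect a genuine obstacle; the work is essentially bookkeeping. The two places that need a little care are keeping the four pairs of square-root factors straight so that the stated coefficients come out verbatim, and justifying the chain rule for $x\mapsto\c_{\l,\m}(z,S_2(x))$ — this is immediate from the integral representation of $\c_{\l,\m}(z,\cdot)$ and the substitution $t\mapsto 1-r_2+r_2s$, i.e. the Lebesgue counterpart of Lemma \ref{lem1selfsim}. Alternatively one could derive all four identities directly from Propositions \ref{pro:pformel} and \ref{pro:qformel} via the multiplication formulas \eqref{formgg}--\eqref{formuu} together with Proposition \ref{pqscaling}, but this only repeats the computation already carried out for \eqref{lrcpz} and \eqref{lrsqz}.
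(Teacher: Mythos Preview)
Your proposal is correct and follows essentially the same route as the paper: apply \eqref{lrcpz} and \eqref{lrsqz} at $S_2(x)=1-r_2+r_2x$, pass every occurrence of $r_1$ and $r_1x$ through the $S_1$-scaling relations of Proposition \ref{cpzsqzS1} (using $\sqrt{r_1m_1}\,\bar z=\sqrt{r_2m_2}\,z$), and then obtain the remaining two identities by differentiating in $x$. The paper's proof is identical in structure, only more terse about the radical bookkeeping.
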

\begin{proof}
	By \eqref{lrcpz} and Proposition \ref{cpzsqzS1} we get
	\begin{align*}
	\c_{\l,\m}(z,1-r_2+r_2x)	&	= \c_{\l,\m}(z,r_1) \c_{\l,\m}\bigl(\sqrt{\frac{r_2 m_2}{r_1 m_1}}z, r_1 x\bigr)-\sqrt{\tfrac{r_2m_1}{r_1m_2}}\s_{\m,\l}(z,r_1) 
	\s_{\l,\m}\bigl(\sqrt{\frac{r_2 m_2}{r_1 m_1}}z, r_1x \bigr)\\
										& \quad - [1-(r_1+r_2)]z\s_{\m,\l}(z,r_1)\c_{\l,\m}\bigl(\sqrt{\frac{r_2 m_2}{r_1 m_1}}z, r_1 x\bigr)\\
										& = \cos_{\m}^N(\sqrt{r_1m_1}z) \c_{\l,\m}\bigl( \sqrt{r_2m_2}z, x\bigr) - \sqrt{\frac{r_2m_1}{r_1m_2}}\sin_{\m}^N(\sqrt{r_1m_1}z) \s_{\l,\m}\bigl(\sqrt{r_2m_2}z, x\bigr)\\
										& \quad - [1-(r_1+r_2)]\sqrt{\frac{m_1}{r_1}} z \sin_{\m}^N(\sqrt{r_1m_1}z) \c_{\l,\m}\bigl( \sqrt{r_2m_2}z, x\bigr).
	\end{align*}
	Analogously, \eqref{sqzS2} is proved using \eqref{lrsqz}.
	
	The other two equations are obtained by deriving.
\end{proof}

If the functions $\cos_{\m}^N$, $\sin_{\m}^N$ and $\sin_{\m}^D$ are assumed to be known, then equations \eqref{cpzS1} and \eqref{cpzS2} allow to compute basically all relevant values of the function $\c_{\l,\m}(z,\cdot)$. If, namely, $x$ is a point in the invariant set $K$, then there is a sequence $(x_n)_n$ that converges to $x$ and takes only values of the form 
\[ S_{w_1}\circ S_{w_2} \circ \dotsm \circ S_{w_n} (0) \quad \text{or} \quad S_{w_1}\circ S_{w_2} \circ \dotsm \circ S_{w_n} (1),\]
where $n\in\N$ and $w_1, \dotsc w_n \in \{1,2\}$. For each of these values, \eqref{cpzS1} and \eqref{cpzS2} can be applied $n$ times to get a formula containing only values of $\cos_{\m}^N$, $\sin_{\m}^N$ and $\sin_{\m}^D$. For example
\begin{align*}
\c_{\l,\m} \big(z, S_2(S_1(1))\big)	& = \cos_{\m}^N(\sqrt{r_1m_1}z)\cos_{\m}^N(\sqrt{r_2m_2r_1m_1}z) \\
														& \quad - \sqrt{\frac{r_2m_1}{r_1m_2}}\sin_{\m}^N(\sqrt{r_1m_1}z)\sin_{\m}^D(\sqrt{r_2m_2r_1m_1}z)\\
														& \quad - [1-(r_1+r_2)]\sqrt{\frac{m_1}{r_1}}z \sin_{\m}^N(\sqrt{r_1m_1}z) \cos_{\m}^N(\sqrt{r_2m_2r_1m_1}z).
\end{align*}
The same holds for $\s_{\l,\m}$ and formulas \eqref{sqzS1} and \eqref{sqzS2}. This procedure we will use to compute approximate values of the maxima and to give plots of eigenfunctions in Section~\ref{numerical}.

Therefore we are interested in the functions $\sin_{\m}^D$, $\sin_{\m}^N$, $\cos_{\m}^N$, and $\cos_{\m}^D$. These have power series representations with coefficients
$p_n=p_n(1)$ and $q_n=q_n(1)$. For these numerical sequences we prove a recursion formula in the following.

\begin{pro}\label{pro:pqformel}
\begin{enumerate}
	\item
		For $n\in \N_0$,
		\begin{equation}\label{fo1}
		\begin{split}
		p_{2n+1} & =  \sum_{i=0}^n r_1^i m_1^{i+1}(r_2 m_2)^{n-i} p_{2i+1}\,q_{2n-2i}\\
		& + \sum_{i=0}^n (r_1m_1)^i r_2^{n-i} m_2^{n-i+1} p_{2i}\, p_{2n-2i+1}\\
		& + [1-(r_1+r_2)]\sum_{i=0}^{n-1} r_1^i m_1^{i+1} r_2^{n-i-1}m_2^{n-i} p_{2i+1}\, p_{2n-2i-1}.
		\end{split}
		\end{equation}
	\item
		For $n\in \N$,
		\begin{equation}\label{fo2}
		\begin{split}
		p_{2n}&=\sum_{i=0}^n (r_1 m_1)^{i} (r_2 m_2)^{n-i} p_{2i}\,p_{2n-2i}\\
		&+ \sum_{i=0}^{n-1} r_1^i m_1^{i+1} r_2^{n-i} m_2^{n-i-1} p_{2i+1}\, q_{2n-2i-1}\\
		&+ [1-(r_1+r_2)] \sum_{i=0}^{n-1} r_1^i m_1^{i+1}  (r_2 m_2)^{n-i-1} p_{2i+1}\, p_{2n-2i-2}.
		\end{split}
		\end{equation}
	\item
		For $n\in \N_0$,
		\begin{equation}\label{fo3}
		\begin{split}
		q_{2n+1}&=\sum_{i=0}^n r_1^{i+1}m_1^{i}(r_2m_2)^{n-i} q_{2i+1}\,p_{2n-2i} \\
		& + \sum_{i=0}^{n} (r_1m_1)^{i} r_2^{n-i+1}m_2^{n-i} q_{2i}\, q_{2n-2i+1}\\
		& + [1-(r_1+r_2)] \sum_{i=0}^{n} (r_1m_1)^{i} (r_2m_2)^{n-i} q_{2i}\, p_{2n-2i}.
		\end{split}
		\end{equation}
	\item
	For $n\in \N$,
	\begin{equation}\label{fo4}
		\begin{split}
		q_{2n}&=\sum_{i=0}^n (r_1m_1)^{i} (r_2m_2)^{n-i} q_{2i}\,q_{2n-2i}\\
		&+ \sum_{i=0}^{n-1} r_1^{i+1}m_1^{i} r_2^{n-i-1}m_2^{n-i} q_{2i+1}\, p_{2n-2i-1}\\
		&+ [1-(r_1+r_2)] \sum_{i=0}^{n-1} (r_1m_1)^{i} r_2^{n-i-1} m_2^{n-i} q_{2i}\, p_{2n-2i-1}.
		\end{split}
		\end{equation}
\end{enumerate}
\end{pro}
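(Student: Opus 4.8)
The plan is to obtain all four recursion formulas by specializing the "binomial-type" identities of Propositions~\ref{pro:pformel} and~\ref{pro:qformel} to the point $x=1$ and then eliminating the values at $r_1$ by means of the scaling laws of Proposition~\ref{pqscaling}. No new induction is needed, since the $x$-dependent formulas have already been established.

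First I would observe that $S_2(1)=1-r_2+r_2\cdot 1=1$, so putting $x=1$ in the first formula of Proposition~\ref{pro:pformel} collapses its left-hand side to $p_{2n+1}(1)=p_{2n+1}$, while its right-hand side becomes a combination of the numbers $p_{2i+1}(r_1)$, $p_{2i}(r_1)$, $q_{2n-2i}(r_1)$, $p_{2n-2i+1}(r_1)$, $p_{2n-2i-1}(r_1)$ with coefficients that are monomials in $r_1,r_2,m_1,m_2$. Next I would apply Proposition~\ref{pqscaling} at $x=1$ to rewrite each such value at $r_1$ as the corresponding value at $1$ times a power of $r_1m_1$ (or of $r_1^{\bullet}m_1^{\bullet}$ with shifted exponents in the odd cases), noting that for the index $0$ the even scaling identity holds trivially because $p_0=q_0\equiv 1$. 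Finally I would multiply out and consolidate the powers of $r_1,r_2,m_1,m_2$ in each of the three resulting sums; this reproduces \eqref{fo1} term by term (for instance the first sum contributes $r_1^i m_1^{i+1}\bigl(\tfrac{r_2m_2}{r_1m_1}\bigr)^{n-i}(r_1m_1)^{n-i}=r_1^i m_1^{i+1}(r_2m_2)^{n-i}$, and similarly the second sum gives $(r_1m_1)^i r_2^{n-i}m_2^{n-i+1}$).

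The formulas \eqref{fo2}, \eqref{fo3}, \eqref{fo4} would be proved in exactly the same way, starting respectively from \eqref{gerpselfsim}, \eqref{ungerqselfsim}, and the second ($q_{2n}$) formula of Proposition~\ref{pro:qformel}. There is no genuine obstacle here: the argument is a one-line substitution $x=1$ followed by bookkeeping of exponents. The only points requiring a little care are keeping the empty sums (a sum from $0$ to $-1$ being zero) consistent throughout, and distinguishing the two slightly different scaling exponents $q_{2m+1}(r_1x)=r_1^{m+1}m_1^{m}q_{2m+1}(x)$ versus $p_{2m+1}(r_1x)=r_1^{m}m_1^{m+1}p_{2m+1}(x)$ when both occur inside the sums of the $q$-identities. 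Since that bookkeeping is purely mechanical, I would present only the computation for \eqref{fo1} in detail and state that the other three cases are analogous.
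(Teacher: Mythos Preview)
Your proposal is correct and is exactly the approach the paper takes: set $x=1$ in Propositions~\ref{pro:pformel} and~\ref{pro:qformel}, then use Proposition~\ref{pqscaling} to replace every $p_k(r_1)$ and $q_k(r_1)$ by the corresponding $p_k$, $q_k$ times the appropriate power of $r_1,m_1$. The paper's own proof is in fact a single sentence to this effect, so your write-up is already more detailed than what appears there.
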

\begin{rem}
	If we take $r_1=m_1$ and $r_2=m_2$ (and thus $r_1+r_2=1$ and $\m$ is the Lebesgue measure), the above formulas reduce to $\sum_{i=0}^n \binom{n}{i} r_1^i r_2^{n-i} =1$.
\end{rem}
\begin{proof}
We put $x=1$ in Propositions \ref{pro:pformel}, \ref{pro:qformel}, and \ref{pqscaling}. Then we eliminate all terms  of the form $p_n(r_1)$ and $q_n(r_1)$ to obtain formulas that contain only the members of the sequences $(p_n)_n$ and $(q_n)_n$ (as well as $r_1$, $r_2$, $m_1$ and $m_2$).
\end{proof}

To get the desired recursion formulas, we solve the above formulas for the highest order terms.
\begin{cor}\label{rec}
\begin{enumerate}
	\item
		For $n\in \N$,
		\begin{equation}\label{rec1}
		\begin{split}
		p_{2n+1}= & \frac{1}{1-r_1^n m_1^{n+1} - r_2^n m_2^{n+1}}\bigg( \sum_{i=0}^{n-1}r_1^i m_1^{i+1}(r_2 m_2)^{n-i} p_{2i+1}\,q_{2n-2i} \\
		& + \sum_{i=1}^n (r_1m_1)^i r_2^{n-i} m_2^{n-i+1} p_{2i}\, p_{2n-2i+1}	\\
		& + [1-(r_1+r_2)]\sum_{i=0}^{n-1} r_1^i m_1^{i+1} r_2^{n-i-1}m_2^{n-i} p_{2i+1}\, p_{2n-2i-1}\bigg).
		\end{split}
		\end{equation}
	\item
		For $n\in \N$,
		\begin{equation}\label{rec2}
		\begin{split}
		p_{2n}= & \frac{1}{1-(r_1 m_1)^n-(r_2 m_2)^n}\bigg(\sum_{i=1}^{n-1}  (r_1 m_1)^{i} (r_2 m_2)^{n-i} p_{2i}\,p_{2n-2i}\\
		& + \sum_{i=0}^{n-1} r_1^i m_1^{i+1} r_2^{n-i} m_2^{n-i-1} p_{2i+1}\, q_{2n-2i-1}\\
		& +  [1-(r_1+r_2)] \sum_{i=0}^{n-1} r_1^i m_1^{i+1}  (r_2 m_2)^{n-i-1} p_{2i+1}\, p_{2n-2i-2} \bigg).
		\end{split}
		\end{equation}
	\item
		For $n\in \N$,
		\begin{equation}\label{rec3}
		\begin{split}
		q_{2n+1} = &\frac{1}{1-r_1^{n+1}m_1^n-r_2^{n+1}m_2^n}\bigg( \sum_{i=0}^{n-1} r_1^{i+1}m_1^{i}(r_2m_2)^{n-i} q_{2i+1}\,p_{2n-2i}\\
		&+ \sum_{i=1}^{n} (r_1m_1)^{i} r_2^{n-i+1}m_2^{n-i} q_{2i}\, q_{2n-2i+1}\\
		&+  [1-(r_1+r_2)] \sum_{i=0}^{n} (r_1m_1)^{i} (r_2m_2)^{n-i} q_{2i}\, p_{2n-2i} \bigg).
		\end{split}
		\end{equation}
	\item
	For $n\in \N$,
	\begin{equation}\label{rec4}
		\begin{split}
		q_{2n} = & \frac{1}{1- (r_1 m_1)^n - (r_2 m_2)^n}\bigg(\sum_{i=1}^{n-1} (r_1m_1)^{i} (r_2m_2)^{n-i} q_{2i}\,q_{2n-2i} \\
		& + \sum_{i=0}^{n-1} r_1^{i+1}m_1^{i} r_2^{n-i-1}m_2^{n-i} q_{2i+1}\, p_{2n-2i-1}\\
		& + [1-(r_1+r_2)] \sum_{i=0}^{n-1} (r_1m_1)^{i} r_2^{n-i-1} m_2^{n-i} q_{2i}\, p_{2n-2i-1} \bigg).
		\end{split}
		\end{equation}
\end{enumerate}
\end{cor}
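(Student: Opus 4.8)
The proof is a straightforward algebraic manipulation of the four identities in Proposition \ref{pro:pqformel}: in each of \eqref{fo1}--\eqref{fo4} one isolates the two summands on the right-hand side that carry the top-index coefficient, moves them to the left, and divides.

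First I would record that $p_0 = p_0(1) = 1$ and $q_0 = q_0(1) = 1$ by Definition \ref{defpq}. In \eqref{fo1} the summand with $i = n$ in the first sum is $r_1^n m_1^{n+1}\, p_{2n+1}\, q_0 = r_1^n m_1^{n+1}\, p_{2n+1}$ and the summand with $i = 0$ in the second sum is $r_2^n m_2^{n+1}\, p_0\, p_{2n+1} = r_2^n m_2^{n+1}\, p_{2n+1}$; every other summand on the right has strictly smaller top index --- in particular the bracketed $[1-(r_1+r_2)]$-sum runs only to $i = n-1$ and so reaches at most $p_{2n-1}$. Transposing these two terms to the left of \eqref{fo1} yields $\bigl(1 - r_1^n m_1^{n+1} - r_2^n m_2^{n+1}\bigr)\, p_{2n+1}$, and division by this factor gives exactly \eqref{rec1}. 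I would then repeat the identical step for the remaining three identities: for \eqref{fo2} and \eqref{fo4} the two extracted summands are those with $i = n$ and $i = 0$ in the respective first sum, yielding the common factor $1 - (r_1 m_1)^n - (r_2 m_2)^n$; for \eqref{fo3} they are the $i = n$ summand of the first sum and the $i = 0$ summand of the second sum, yielding $1 - r_1^{n+1} m_1^n - r_2^{n+1} m_2^n$.

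It remains only to see that these denominators are nonzero, which also explains the restriction to $n \in \N$. Since $r_1, r_2 > 0$ with $r_1 + r_2 \leq 1$ we have $r_1, r_2 < 1$, so for $n \geq 1$
\[ r_1^n m_1^{n+1} + r_2^n m_2^{n+1} \leq r_1 m_1 + r_2 m_2 \leq \max(r_1, r_2)(m_1 + m_2) = \max(r_1,r_2) < 1, \]
and the same bound holds for $(r_1 m_1)^n + (r_2 m_2)^n$ and for $r_1^{n+1} m_1^n + r_2^{n+1} m_2^n$. For $n = 0$ the identities \eqref{fo1} and \eqref{fo3} are tautologies --- e.g. \eqref{fo1} collapses to $p_1 = (m_1 + m_2)\,p_1$ --- so nothing can be extracted, which is why those two statements are restricted to $n \in \N$.

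I do not anticipate a genuine obstacle: the argument is pure bookkeeping. The only points that call for care are verifying, in each of \eqref{fo1}--\eqref{fo4}, that no summand other than the two identified ones contributes the top-index coefficient (a quick inspection of the index ranges of all three sums, including the bracketed one), together with the non-vanishing of the denominators as above.
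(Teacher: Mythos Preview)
Your proposal is correct and follows exactly the paper's approach: the paper merely says ``we solve the above formulas for the highest order terms,'' and you carry this out explicitly. Your additional verification that the denominators are strictly positive (which the paper omits) and your observation about why $n=0$ is excluded are both correct and make the argument complete.
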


\begin{rem}
	Consider two self-similar measures $\m$ and $\m^*$ on $[0,1]$, where $\m^*$ is the reflection of $\m$ with respect to the point $\frac{1}{2}$. Thus, $\m^*$ is described as invariant measure by interchanging the parameters $r_1$, $m_1$ and $r_2$, $m_2$ in the IFS defining $\m$.
	Then the above recursive formulas show that the associated $p$- and $q$-sequences satisfy $p^*_{2n}=q_{2n}$, $q^*_{2n}=p_{2n}$, $p^*_{2n+1}=p_{2n+1}$ and $q^*_{2n+1}=q_{2n+1}$ for all $n\in \N$. Hence, ${\cos_{\m^*}^N}=\cos_{\m}^D$, ${\cos_{\m^*}^D}=\cos_{\m}^N$, ${\sin_{\m^*}^N}=\sin_{\m}^N$ and ${\sin_{\m^*}^D}=\sin_{\m}^D$. This is consistent with the physical intuition that the Neumann as well as the Dirichlet eigenfrequencies do not change when the vibrating string producing them is reversed.
\end{rem}

\begin{exa}
	We take $r_1=r_2=\frac{1}{3}$ and $m_1=m_2=\frac{1}{2}$. Then, $K$ is the middle third Cantor set and $\m$ is the normalized $\frac{\log 2}{\log 3}$-dimensional
	Hausdorff measure restricted to $K$. We calculate the first members of the sequences $(p_n)_n$ and $(q_n)_n$ using formulas \eqref{rec1} and \eqref{rec3}
	for $p_{2n+1}$ and $q_{2n+1}$, which simplify to
	\begin{align*}
	p_{2n+1}	&	= \frac{1}{2 \cdot 6^n - 2}\Big(\sum_{i=1}^{2n} p_i \, p_{2n+1-i} + \sum_{i=0}^{n-1} p_{2i+1}\,p_{2n-2i-1}\Big)\\
	q_{2n+1}	&	= \frac{1}{3 \cdot 6^n - 2}\Big(\sum_{i=1}^{2n} q_i \, q_{2n+1-i} + \sum_{i=0}^n q_{2i}\,q_{2n-2i}\Big).
	\end{align*}
	Since $\m$ is symmetric, we can use for $p_{2n}$ and $q_{2n}$ the simpler formula \eqref{evreceq} 
	\[p_{2n}=q_{2n}= \frac{1}{2}\sum_{i=1}^{2n-1} (-1)^{i+1} p_i\, q_{2n-i}\]
	from Corollary \ref{cor:symm}. Then,
	\begin{align*}
	p_1	&	=	1,													&	q_1	&	= 1,														& p_2	&	=\frac{1}{2},\\
	p_3	&	=\frac{1}{5},									& q_3	&	= \frac{1}{8},									& p_4	&	=\frac{3}{80}, \\
	p_5	& =\frac{27}{2\,800},						& q_5	&	= \frac{21}{4\,240},						& p_6	&	=\frac{311}{296\,800},\\
	p_7	&	=\frac{6\,383}{31\,906\,000},	&	q_7	&	=\frac{33\,253}{383\,465\,600}, & p_8	& = \frac{4\,716\,349}{329\,780\,416\,000}
	\end{align*}
	and therefore
	\begin{align*}
	\sin_{\m}^N (z)	&	= z - \frac{6}{5}  \frac{z^3}{3!} + \frac{81}{70}  \frac{z^5}{5!} - \frac{57\,447}{56\,975} \frac{z^7}{7!}+\cdots  \\[2mm]
	\sin_{\m}^D (z)	&	= z - \frac{3}{4}\frac{z^3}{3!} + \frac{63}{106} \frac{z^5}{5!} - \frac{299\,277}{684\,760} \frac{z^7}{7!} + \cdots
	\end{align*}
	and
	\[ \cos_{\m}^N(z)=\cos_{\m}^D(z)= 1 - \frac{z^2}{2!} + \frac{9}{10} \frac{z^4}{4!} - \frac{2\,799}{3\,710} \frac{z^6}{6!} + \frac{42\,447\,141}{73\,611\,700} \frac{z^8}{8!} - \dots .\]
	%\begin{align*}
	%\sin_{\m}^N (z)	&	= z - \frac{1}{5}z^3 + \frac{27}{2\,800} z^5 - \frac{6\,383}{31\,906\,000} z^7+\cdots  \\[2mm]
	%\sin_{\m}^D (z)	&	= z - \frac{1}{8}z^3 + \frac{21}{4\,240} z^5 - \frac{33\,253}{383\,465\,600} z^7 + \cdots
	%\end{align*}
	%and
	%\[ \cos_{\m}^N(z)=\cos_{\m}^D(z)= 1 - \frac{1}{2} z^2 + \frac{3}{80} z^4 - \frac{311}{296\,800} z^6 + \dots.\]
\end{exa}
More values of the sequences $p_n$ and $q_n$, plots of $\sin_{\m}^N$ and $\sin_{\m}^D$ as well as further examples can be found in Section \ref{numerical}.

The functions $\sin_{\m}^N$, $\sin_{\m}^D$, $\cos_{\m}^N$ and $\cos_{\m}^D$ can be characterized by the following system of functional equations.

\begin{theo}\label{th1}
For $z\in\R$ we have
\begin{align}
\label{th11}
\begin{split}
	\ts \sin_{\m}^N(z)	 &= \ts \sqrt{\frac{m_1}{r_1}}\sin_{\m}^N (\sqrt{r_1m_1}z) \cos_{\m}^D (\sqrt{r_2 m_2}z) \\
	&	\ts \quad + \sqrt{\frac{m_2}{r_2}}\cos_{\m}^N( \sqrt{r_1m_1}z) \sin_{\m}^N (\sqrt{r_2m_2}z)\\
	&	\ts \quad - [1-(r_1+r_2)]\sqrt{\frac{m_1m_2}{r_1r_2}}\, z \sin_{\m}^N( \sqrt{r_1 m_1}z) \sin_{\m}^N (\sqrt{r_2 m_2}z)
\end{split}\\
\label{th12}
\begin{split}
	\ts \sin_{\m}^D(z)	 &= \ts \sqrt{\frac{r_1}{m_1}}\sin_{\m}^D (\sqrt{r_1m_1}z) \cos_{\m}^N (\sqrt{r_2 m_2}z) \\
	& \ts \quad + \sqrt{\frac{r_2}{m_2}}\cos_{\m}^D( \sqrt{r_1m_1}z) \sin_{\m}^D (\sqrt{r_2m_2}z)\\
	&	\ts \quad + [1-(r_1+r_2)]\, z \cos_{\m}^D( \sqrt{r_1 m_1}z) \cos_{\m}^N (\sqrt{r_2 m_2}z)
\end{split}
\end{align}
\begin{align}
\label{th13}
\begin{split}
	\ts \cos_{\m}^N(z)	& = \ts \cos_{\m}^N (\sqrt{r_1m_1}z) \cos_{\m}^N (\sqrt{r_2 m_2}z)\\
	&	\ts \quad - \sqrt{\frac{r_2 m_1}{r_1 m_2}}\sin_{\m}^N( \sqrt{r_1m_1}z) \sin_{\m}^D (\sqrt{r_2m_2}z)\\
	&	\ts	\quad - [1-(r_1+r_2)]\sqrt{\frac{m_1}{r_1}} z \sin_{\m}^N( \sqrt{r_1 m_1}z) \cos_{\m}^N (\sqrt{r_2 m_2}z)
\end{split}\\
\label{th14}
\begin{split}
	\ts \cos_{\m}^D(z)	 &= \ts \cos_{\m}^D (\sqrt{r_1m_1}z) \cos_{\m}^D (\sqrt{r_2 m_2}z)\\
	&	\ts \quad - \sqrt{\frac{r_1 m_2}{r_2 m_1}}\sin_{\m}^D( \sqrt{r_1m_1}z) \sin_{\m}^N (\sqrt{r_2m_2}z)\\
	&	\ts \quad - [1-(r_1+r_2)] \sqrt{\frac{m_2}{r_2}} z \cos_{\m}^D( \sqrt{r_1 m_1}z) \sin_{\m}^N (\sqrt{r_2 m_2}z).
\end{split}
\end{align}
Furthermore, the functions $\sin_{\m}^N$, $\sin_{\m}^D$, $\cos_{\m}^N$ and $\cos_{\m}^D$ are the only analytic functions that solve the above system of functional equations and satisfy the
conditions that $\sin_{\m}^N$ and $\sin_{\m}^D$ are odd, $\cos_{\m}^N$ and $\cos_{\m}^D$ are even, and
\[ \lim_{z\to 0} \frac{\sin_{\m}^N(z)}{z} = \lim_{z\to 0} \frac{\sin_{\m}^D(z)}{z} = 1\]
and
\[ \cos_{\m}^N(0)=\cos_{\m}^D(0)=1.\]
\end{theo}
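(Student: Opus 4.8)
\emph{Proof proposal.} The plan is to prove the two assertions separately.

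For the functional equations \eqref{th11}--\eqref{th14}, the shortest route is to specialise Proposition~\ref{cpzsqzS2} to the value $x=1$. Since $S_2(1)=r_2\cdot 1+1-r_2=1$, the four left-hand sides $\c_{\l,\m}(z,S_2(1))$, $\s_{\l,\m}(z,S_2(1))$, $\s_{\m,\l}(z,S_2(1))$, $\c_{\m,\l}(z,S_2(1))$ become $\cos_{\m}^N(z)$, $\sin_{\m}^D(z)$, $\sin_{\m}^N(z)$, $\cos_{\m}^D(z)$ respectively, while on the right-hand sides each factor of the form $\c_{\l,\m}(\sqrt{r_2m_2}z,1)$, $\s_{\l,\m}(\sqrt{r_2m_2}z,1)$, $\s_{\m,\l}(\sqrt{r_2m_2}z,1)$, $\c_{\m,\l}(\sqrt{r_2m_2}z,1)$ collapses to $\cos_{\m}^N(\sqrt{r_2m_2}z)$, $\sin_{\m}^D(\sqrt{r_2m_2}z)$, $\sin_{\m}^N(\sqrt{r_2m_2}z)$, $\cos_{\m}^D(\sqrt{r_2m_2}z)$. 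Reading off the four resulting identities one by one gives exactly \eqref{th13}, \eqref{th12}, \eqref{th11}, \eqref{th14}. In particular the quadruple $(\sin_{\m}^N,\sin_{\m}^D,\cos_{\m}^N,\cos_{\m}^D)$ is a solution of the system; the parity and normalisation conditions are immediate from the power-series definitions, e.g.\ $\sin_{\m}^N$ is odd with $\sin_{\m}^N(z)/z\to p_1(1)=\m([0,1])=1$, while $\cos_{\m}^N(0)=p_0(1)=1$.

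For uniqueness, suppose $(S^N,S^D,C^N,C^D)$ is any quadruple of analytic functions satisfying \eqref{th11}--\eqref{th14} together with the stated side conditions, and write $S^N(z)=\sum_{n\ge0}a_nz^{2n+1}$, $S^D(z)=\sum_{n\ge0}b_nz^{2n+1}$, $C^N(z)=\sum_{n\ge0}c_nz^{2n}$, $C^D(z)=\sum_{n\ge0}d_nz^{2n}$, so that the side conditions force $a_0=b_0=c_0=d_0=1$. I would prove by strong induction on $n$ that $a_n,b_n,c_n,d_n$ are uniquely determined; by the first part this then forces $S^N=\sin_{\m}^N$, $S^D=\sin_{\m}^D$, $C^N=\cos_{\m}^N$, $C^D=\cos_{\m}^D$. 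For the inductive step one compares coefficients of $z^{2n}$ in \eqref{th13} and \eqref{th14} and of $z^{2n+1}$ in \eqref{th11} and \eqref{th12}, using expansions such as $C^N(\sqrt{r_im_i}z)=\sum_kc_k(r_im_i)^kz^{2k}$ and $S^N(\sqrt{r_im_i}z)=\sqrt{r_im_i}\sum_ka_k(r_im_i)^kz^{2k+1}$. A short bookkeeping check shows the system is triangular at each level: in \eqref{th13} the only index-$n$ coefficient occurring is $c_n$, with prefactor $1-(r_1m_1)^n-(r_2m_2)^n$; in \eqref{th14} only $d_n$ occurs, with the same prefactor; in \eqref{th11} the coefficient $a_n$ occurs, with prefactor $1-m_1(r_1m_1)^n-m_2(r_2m_2)^n$, together with $c_n$ and $d_n$; and in \eqref{th12} the coefficient $b_n$ occurs, with prefactor $1-r_1(r_1m_1)^n-r_2(r_2m_2)^n$, together with $c_n$ and $d_n$; everything else is a polynomial in coefficients of index at most $n-1$. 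Hence, for $n\ge1$, \eqref{th13} determines $c_n$, then \eqref{th14} determines $d_n$, and then \eqref{th11} and \eqref{th12} determine $a_n$ and $b_n$.

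The heart of the argument is that these four prefactors do not vanish for $n\ge1$, which I would deduce from the standing assumptions: $r_1+r_2\le1$ and $m_1+m_2=1$ with all four numbers positive give $0<r_1,r_2<1$, whence for $n\ge1$ one has $(r_1m_1)^n+(r_2m_2)^n\le r_1m_1+r_2m_2<m_1+m_2=1$, and similarly $m_1(r_1m_1)^n+m_2(r_2m_2)^n\le r_1m_1^2+r_2m_2^2<1$ and $r_1(r_1m_1)^n+r_2(r_2m_2)^n\le r_1^2m_1+r_2^2m_2<1$. (At $n=0$ some of these prefactors genuinely vanish, e.g.\ $1-m_1-m_2=0$, which is exactly why the normalisations $C^N(0)=C^D(0)=1$ and $S^N(z)/z,S^D(z)/z\to1$ are needed to pin down the level-$0$ coefficients.) Combining the triangular solvability with the first part completes the proof. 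I expect the only genuinely fiddly step to be the bookkeeping that identifies, in each of \eqref{th11}--\eqref{th14}, precisely which coefficients appear at level $n$; once the above expansions are written out this is a routine computation.
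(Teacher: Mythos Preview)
Your proposal is correct and follows essentially the same route as the paper. For the functional equations you specialise Proposition~\ref{cpzsqzS2} at $x=1$, exactly as the paper does; for uniqueness you compare power-series coefficients and argue the resulting system is triangular with nonvanishing leading prefactors, which is precisely the content of the paper's argument that the coefficients satisfy the recursions \eqref{fo1}--\eqref{fo4} of Proposition~\ref{pro:pqformel} (whose unique solvability is exhibited in Corollary~\ref{rec} via those same prefactors). The only cosmetic difference is that the paper names the coefficient identities as \eqref{fo1}--\eqref{fo4} and appeals to the already-established recursion for $p_n,q_n$, whereas you re-derive the triangular structure in place; both routes amount to the same computation.
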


\begin{rem}
If we would know all the values of all four functions on a given interval, say, $[0,a]$, then,  using the formulas above,  we could calculate all values of all four functions on $[0,(\max_i \sqrt{r_i m_i})^{-1}a]$. Then, iteratively, we get the values on $[0,(\max_i \sqrt{r_i m_i})^{-2} a]$ and so on. So, the functions are determined on $[0,\infty)$ by their values on an arbitrary small interval $[0,a]$.

Furthermore, the theorem describes a kind of ``self-similarity'' of our four functions.
\end{rem}

\begin{proof}
	To show that $\sin_{\m}^N$, $\sin_{\m}^D$, $\cos_{\m}^N$ and $\cos_{\m}^D$ satisfy the equations, put $x=1$ in Proposition \ref{cpzsqzS2}.
	
	Suppose that $f_1, f_2, g_1$ and $g_2$ are real analytic functions that satisfy the above equations, and that $f_1$, $f_2$ are odd, $g_1$, $g_2$ are even, 
	$ \lim\limits_{z\to 0} \frac{f_1(z)}{z} = \lim\limits_{z\to 0} \frac{f_2(z)}{z} = 1$, and $g_1(0)=g_2(0)=1$. Then, power series representations exist, that is, there are real sequences $(a_n)$, $(b_n)$, $(c_n)$ and $(d_n)$ such that for all $z\in\R$ holds
	\begin{align*}
	f_1(z) & = \sum_{n=0}^\infty a_n z^{2n+1}, & f_2(z) & = \sum_{n=0}^\infty b_n z^{2n+1}, & g_1(z) & = \sum_{n=0}^\infty c_n z^{2n},&
	g_2(z) & = \sum_{n=0}^\infty d_n z^{2n},
	\end{align*}
	where $a_0=b_0=c_0=d_0=1$. Since these functions satisfy \eqref{th11}, we get for all $z\in\R$
	\begin{align*}
	\sum_{n=0}^\infty a_n z^{2n+1}	& = \sqrt{\frac{m_1}{r_1}} \sum_{n=0}^\infty z^{2n+1} \sum_{k=0}^n a_k \sqrt{r_1m_1}^{2k+1} d_{n-k} \sqrt{r_2m_2}^{2n-2k}\\
																	& \quad + \sqrt{\frac{m_2}{r_2}} \sum_{n=0}^\infty z^{2n+1} \sum_{k=0}^n c_k \sqrt{r_1m_1}^{2k} a_{n-k} \sqrt{r_2m_2}^{2n+1-2k}\\
																	& \quad - [1-(r_1+r_2)] \sqrt{\frac{m_1m_2}{r_1r_2}}  \sum_{n=0}^\infty z^{2n+3} \sum_{k=0}^n a_k \sqrt{r_1m_1}^{2k+1}
																						a_{n-k} \sqrt{r_2m_2}^{2n+1-2k}.
	\end{align*}
	If we derive this equation $2j+1$ times and put $z=0$, we receive formula \eqref{fo1} for $a_j$. Analogously, one can show that $b_j$ satisfies
	\eqref{fo3}, $c_j$ satisfies \eqref{fo2} and $d_j$ satisfies \eqref{fo4}. Together with the initial condition $a_0=b_0=c_0=d_0=1$ it follows that
	$a_j=p_{2j+1}$, $b_j=q_{2j+1}$, $c_j=p_{2j}$ and $d_j=q_{2j}$ for all $j\in\N$. Thus, $f_1=\sin_{\m}^N$, $f_2=\sin_{\m}^D$, $g_1=\cos_{\m}^N$ and $g_2=\cos_{\m}^D$.
\end{proof}

\begin{exa}
\begin{enumerate}
	\item
		If we take $r_1=m_1$ and $r_2=m_2$ and $r_1+r_2=1$, then $K$ is the unit interval and $\m$ the Lebesgue measure. The functions $\sin_{\m}^N$, $\sin_{\m}^D$, $\cos_{\m}^N$
		and $\cos_{\m}^D$ equal the usual sine and cosine functions, and the formulas in Theorem \ref{th1} simplify to
		\begin{align*} 
		\sin(z) & = \sin(r_1z+r_2z) = \sin(r_1z) \cos (r_2z) + \cos(r_1z) \sin(r_2z),\\
		\cos(z) & = \cos(r_1z+r_2z)  = \cos(r_1z) \cos(r_2 z) - \sin(r_1z) \sin(r_2z).
		\end{align*}
	\item
		Let $r_1=r_2=\frac{1}{3}$ and $m_1=m_2=\frac{1}{2}$.  Then $\m$ is the Cantor measure and the formulas in Theorem \ref{th1} can be rewritten as
\begin{align}
\sin_{\m}^N(\sqrt{6}z)&=\frac{\sqrt{6}}{2}\sin_{\m}^N(z)\Bigl(2\cos_{\m}^N(z) - z\sin_{\m}^N(z)\Bigr)\\
\sin_{\m}^D(\sqrt{6}z)&=\frac{\sqrt{6}}{3}\cos_{\m}^N(z)\Bigl(2\sin_{\m}^D(z) + z\cos_{\m}^N(z)\Bigr)\\
\cos_{\m}^N(\sqrt{6}z)&=\cos_{\m}^N(z)^2-\sin_{\m}^N(z)\sin_{\m}^D(z)-z\cos_{\m}^N(z)\sin_{\m}^N(z).
\end{align}
Since $K$ is symmetric, $\cos_{\m}^N=\cos_{\m}^D$. 
\end{enumerate}
\end{exa}

Observe that Theorem \ref{th1} in combination with the recursive rules in Corollary \ref{rec} supply a technique for investigation of further properties of the eigenvalues. On a given interval $[0,a]$ we can approximate the functions $\sin_{\m}^N$, $\sin_{\m}^D$, $\cos_{\m}^N$ and $\cos_{\m}^D$ arbitrarily exact by polynomials consisting of sufficiently many members of the corresponding power series. Then, by Theorem \ref{th1}, we can extend all four functions successively to larger intervals.

\section{Self-similar measures with $r_1m_1=r_2m_2$}
\label{sec:r1m1gleich}

In this section we suppose $\m$ is a self-similar measure as in the last section but with parameters additionally satisfying $r_1m_1=r_2m_2$. This case is particularly interesting because there we have the following property.

\begin{theo}\label{th:ssnev}
	Let $r_1 m_1=r_2m_2$. If $\l$ is the $m$th Neumann eigenvalue of $-\frac{d}{d\m}\frac{d}{dx}$, then $\frac{1}{r_1m_1}\l$ is the $2m$th Neumann eigenvalue, that is, for all $m\in \N$,
	\[ r_1m_1\, \l_{N,2m} = \l_{N,m}.\]
\end{theo}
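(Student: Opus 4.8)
The plan is to build, out of the $m$th Neumann eigenfunction, an explicit eigenfunction for the candidate eigenvalue $\l_{N,m}/(r_1m_1)$, and then to pin down its index by counting zeros. Write $\rho:=r_1m_1=r_2m_2$, fix $m\in\N$, and put $\l:=\l_{N,m}$, $f:=f_{N,m}=\c_{\l,\m}(\sqrt\l,\cdot)$ and $F:=\c_{\l,\m}\bigl(\sqrt{\l/\rho},\cdot\bigr)$. By Lemma~\ref{diffrules} the function $F$ satisfies $\Delta_{\m}F=-(\l/\rho)F$ and $F'(0)=-\sqrt{\l/\rho}\,\s_{\m,\l}(\sqrt{\l/\rho},0)=0$, and $F\not\equiv0$ because $F(0)=1$. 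Since $\sqrt{r_1m_1}\,\sqrt{\l/\rho}=\sqrt{r_2m_2}\,\sqrt{\l/\rho}=\sqrt\l$, the scaling formulas \eqref{cpzS1} and \eqref{cpzS2} of Propositions~\ref{cpzsqzS1} and~\ref{cpzsqzS2}, together with $\sin_{\m}^N(\sqrt\l)=0$ from Proposition~\ref{pro:neumanneigenvalues}, give for all $x\in[0,1]$
\[ F\bigl(S_1(x)\bigr)=f(x) \qquad\text{and}\qquad F\bigl(S_2(x)\bigr)=\cos_{\m}^N(\sqrt\l)\,f(x). \]
Differentiating the second relation at $x=1$ and using the Neumann condition $f'(1)=0$ gives $F'(1)=0$. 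Hence $F$ is a Neumann eigenfunction and $\l/\rho$ a (positive) Neumann eigenvalue.

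Next I record that $\cos_{\m}^N(\sqrt\l)\neq0$: evaluating Theorem~\ref{pythallg} at $x=1$, $z=\sqrt\l$ and using $\sin_{\m}^N(\sqrt\l)=0$ yields $\cos_{\m}^D(\sqrt\l)\cos_{\m}^N(\sqrt\l)=1$. Now count the zeros of $F$ in $(0,1)$. On $(0,r_1)$: as $S_1$ maps $(0,1)$ increasingly onto $(0,r_1)$ and $F\circ S_1=f_{N,m}$, the function $F$ has exactly as many zeros in $(0,r_1)$ as $f_{N,m}$ has in $(0,1)$, and neither endpoint is a zero since $F(0)=f(0)=1$ and $F(r_1)=f(1)=\cos_{\m}^N(\sqrt\l)\neq0$. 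On $(1-r_2,1)$: likewise $F\circ S_2=\cos_{\m}^N(\sqrt\l)f_{N,m}$ with nonzero factor, so $F$ has the same number of zeros there as $f_{N,m}$ has in $(0,1)$, and $F(1-r_2)=\cos_{\m}^N(\sqrt\l)\neq0$, $F(1)=\cos_{\m}^N(\sqrt\l)^2\neq0$. On the middle interval $[r_1,1-r_2]$: the support of $\m$ lies in $[0,r_1]\cup[1-r_2,1]$, so $\m$ puts no mass on $(r_1,1-r_2)$; integrating $\frac{dF'}{d\m}=-(\l/\rho)F$ shows $F'$ is constant there, hence $F$ is affine on $[r_1,1-r_2]$, and since $F(r_1)=F(1-r_2)=\cos_{\m}^N(\sqrt\l)$ it equals the nonzero constant $\cos_{\m}^N(\sqrt\l)$ and has no zero there. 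Altogether $F$ has exactly twice as many zeros in $(0,1)$ as $f_{N,m}$.

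It remains to identify the eigenvalue $\l/\rho$. Invoking the Sturm-type oscillation property for $-\frac{d}{d\m}\frac{d}{dx}$, namely that the $k$th Neumann eigenfunction has precisely $k$ zeros in $(0,1)$ (the Dirichlet analogue is in Freiberg and L\"obus~\cite{freiberg:zeros}, and the Neumann case is entirely parallel), we conclude that $f_{N,m}$ has $m$ interior zeros, so $F$ has $2m$ interior zeros, and therefore $\l/\rho=\l_{N,2m}$; rearranging gives $r_1m_1\,\l_{N,2m}=\l_{N,m}$ (trivially true also for $m=0$). I expect the delicate points to be making this zero count airtight — in particular verifying that $F$ does not vanish at $r_1$ or at $1-r_2$, which is exactly where $\cos_{\m}^N(\sqrt\l)\neq0$ is used — and having the Neumann oscillation statement available in precisely the form ``the $k$th eigenfunction has $k$ interior zeros.'' Should that not be at hand, an alternative route to the count is the factorization $\sin_{\m}^N(z/\sqrt\rho)=\sin_{\m}^N(z)\,G(z)$ obtained by imposing $r_1m_1=r_2m_2$ and substituting $z\mapsto z/\sqrt\rho$ in \eqref{th11}, with $G$ an explicit entire function whose zeros must be located between consecutive zeros of $z\mapsto\sin_{\m}^N(z/\sqrt\rho)$.
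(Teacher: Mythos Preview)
The paper does not supply its own proof of this theorem; it simply cites the Pr\"ufer-angle arguments of Volkmer~\cite{volkmer:eigenvalues} and Freiberg~\cite{freiberg:pruefer}. Your route is genuinely different and, in a sense, more native to the paper: you use the scaling identities of Propositions~\ref{cpzsqzS1} and~\ref{cpzsqzS2} (which are proved independently of Theorem~\ref{th:ssnev}) to write the candidate $F=\c_{\l,\m}(\sqrt{\l/\rho},\cdot)$ explicitly as two rescaled copies of $f_{N,m}$ glued across the gap, verify the Neumann condition at $1$, and then read off the index by oscillation. This is essentially the content of Proposition~\ref{proreleigfunct2mm} run in reverse, and it has the advantage of staying entirely within the function-theoretic framework the paper builds; the Pr\"ufer-angle proofs, by contrast, give the eigenvalue doubling more directly but require importing a separate apparatus.

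Your argument is correct as written, with the one soft spot you already flag: the Neumann oscillation statement ``the $k$th Neumann eigenfunction has exactly $k$ interior zeros'' is not proved in this paper. It is indeed parallel to the Dirichlet case in~\cite{freiberg:zeros}, and the Pr\"ufer-angle machinery of~\cite{freiberg:pruefer} yields it as well, but you should be aware that invoking it is effectively invoking the same circle of ideas the paper defers to. Your alternative via the factorization $\sin_{\m}^N(z/\sqrt\rho)=\sin_{\m}^N(z)\,G(z)$ from~\eqref{th11} (equivalently~\eqref{sinp2}) does show immediately that $\l/\rho$ is \emph{some} Neumann eigenvalue, but locating the zeros of $G$ to get the precise index $2m$ again requires an interlacing argument of comparable strength. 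So the zero-count is the honest bottleneck either way; everything else in your proof is clean.
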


This Theorem has been proved with the method of Prüfer angles by Volkmer \cite{volkmer:eigenvalues} for the case $r_1=r_2=\frac{1}{3}$, $m_1=m_2=\frac{1}{2}$ and by Freiberg \cite{freiberg:pruefer} in a more general setting. It delivers the foundation for the statements in this section. An analogous property for Dirichlet eigenvalues does not seem to hold. However, in the symmetric case there is a similar relation between Dirichlet eigenvalues and eigenvalues of the problems (DN) or (ND) posed in Section \ref{NDDNproblem}. Remember, (DN) has boundary conditions $f(0)=f'(1)=0$ and (ND) has $f'(0)=f(1)=0$.

\begin{pro}
Let $\m$ be symmetric, that is $r:=r_1=r_2$ and $m_1=m_2=\frac{1}{2}$ and let $\l$ be an eigenvalue of (DN) or (ND). Then $\frac{2}{r}\l$ is a Dirichlet eigenvalue and if $f$ is a $\frac{2}{r}\l$-Dirichlet eigenfunction, then $f\circ S_1$ is a $\l$-(DN) eigenfunction, and $f\circ S_2$ is a $\l$-(ND) eigenfunction.
\end{pro}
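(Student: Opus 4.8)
The plan is to obtain everything from the scaling identities already proved in Propositions~\ref{cpzsqzS1} and~\ref{cpzsqzS2}, combined with the fact that in the symmetric case $\cos_{\m}^N=\cos_{\m}^D$ (Corollary~\ref{cor:symm} \ref{cospeqcosq}). First I would record that, since $\cos_{\m}^N=\cos_{\m}^D$, the (DN)- and (ND)-eigenvalues coincide and are exactly the squares of the positive zeros of $\cos_{\m}^D$; so in either case $\l>0$ and $\cos_{\m}^N(\sqrt{\l})=\cos_{\m}^D(\sqrt{\l})=0$.

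Next I would set $z:=\sqrt{(2/r)\l}$. With $r_1=r_2=r$ and $m_1=m_2=\tfrac12$ this gives $\sqrt{r_1m_1}\,z=\sqrt{r_2m_2}\,z=\sqrt{\l}$ and $\sqrt{r_1/m_1}=\sqrt{r_2/m_2}=\sqrt{2r}$, so substituting into \eqref{sqzS2} and using $\cos_{\m}^D(\sqrt{\l})=0$ kills the last two terms and leaves
\[ \s_{\l,\m}\bigl(z,S_2(x)\bigr)=\sqrt{2r}\;\sin_{\m}^D(\sqrt{\l})\;\c_{\l,\m}(\sqrt{\l},x).\]
Evaluating at $x=1$, where $S_2(1)=1$ and $\c_{\l,\m}(\sqrt{\l},1)=\cos_{\m}^N(\sqrt{\l})=0$, yields $\sin_{\m}^D(z)=\s_{\l,\m}(z,1)=0$; since $z>0$, Proposition~\ref{pro:dirichleteigenvalues} shows that $z^2=(2/r)\l$ is a Dirichlet eigenvalue.

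For the eigenfunctions I would use the uniqueness of Dirichlet eigenfunctions up to a multiplicative constant (Proposition~\ref{pro:dirichleteigenvalues}), so a $(2/r)\l$-Dirichlet eigenfunction has the form $f=c\,\s_{\l,\m}(z,\cdot)$ with $c\neq0$. Then \eqref{sqzS1} gives $f\circ S_1=c\sqrt{2r}\,\s_{\l,\m}(\sqrt{\l},\cdot)$; by Lemma~\ref{diffrules} this solves $\frac{d}{d\m}g'=-\l g$, and $\s_{\l,\m}(\sqrt{\l},0)=0$ together with $\s_{\l,\m}'(\sqrt{\l},1)=\sqrt{\l}\,\c_{\m,\l}(\sqrt{\l},1)=\sqrt{\l}\,\cos_{\m}^D(\sqrt{\l})=0$ identifies it as a $\l$-(DN)-eigenfunction. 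Similarly the displayed collapse of \eqref{sqzS2} gives $f\circ S_2=c\sqrt{2r}\,\sin_{\m}^D(\sqrt{\l})\,\c_{\l,\m}(\sqrt{\l},\cdot)$, and $\c_{\l,\m}(\sqrt{\l},\cdot)$ solves the eigenvalue equation with $\c_{\l,\m}'(\sqrt{\l},0)=-\sqrt{\l}\,\s_{\m,\l}(\sqrt{\l},0)=0$ and $\c_{\l,\m}(\sqrt{\l},1)=\cos_{\m}^N(\sqrt{\l})=0$, so it is a $\l$-(ND)-eigenfunction.

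The one step requiring genuine care, and the main obstacle, is to confirm that $f\circ S_2$ is not the zero function, i.e.\ that $\sin_{\m}^D(\sqrt{\l})\neq0$. This is precisely where the generalized Pythagorean identity of Corollary~\ref{cor:symm} \ref{cospquadplsinpsinq} comes in: since $\cos_{\m}^N(\sqrt{\l})=0$, the relation $\cos_{\m}^N(\sqrt{\l})^2+\sin_{\m}^N(\sqrt{\l})\sin_{\m}^D(\sqrt{\l})=1$ forces $\sin_{\m}^D(\sqrt{\l})\neq0$ (equivalently, a zero of $\cos_{\m}^N$ cannot be a zero of $\sin_{\m}^D$). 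The remaining work is purely bookkeeping of the factors $\sqrt{r_i/m_i}$, $\sqrt{r_im_i}$ and of the observation that the identities in Propositions~\ref{cpzsqzS1} and~\ref{cpzsqzS2} are identities of functions of $x$ on $[0,1]$, so that the chain rule for $f\circ S_1$ and $f\circ S_2$ is already built in.
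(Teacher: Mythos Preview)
Your argument is correct and follows essentially the same route as the paper: you use $\cos_{\m}^N=\cos_{\m}^D$ from Corollary~\ref{cor:symm}, collapse \eqref{sqzS2} via $\cos_{\m}^D(\sqrt{\l})=0$, and then read off the eigenfunction identities from Propositions~\ref{cpzsqzS1} and~\ref{cpzsqzS2}. The only cosmetic difference is that the paper first rewrites \eqref{th12} (which is just \eqref{sqzS2} at $x=1$) as the factorization $\sin_{\m}^D(\sqrt{2/r}\,z)=\cos_{\m}^N(z)\cdot[\,\cdots\,]$ and deduces $\sin_{\m}^D(z)=0$ from that factor, whereas you obtain the same conclusion by evaluating your collapsed \eqref{sqzS2} at $x=1$; these are the same computation. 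Your explicit check that $\sin_{\m}^D(\sqrt{\l})\neq 0$ via the Pythagorean identity, ensuring $f\circ S_2\not\equiv 0$, is a point the paper leaves implicit, so your version is slightly more complete.
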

\begin{proof}
 In Corollary \ref{cor:symm}  we showed that since $\m$ is symmetric, we have $\cos_{\m}^N=\cos_{\m}^D$. Then we can factorize \eqref{th12} and get
\[ \sin_{\m}^D(\sqrt{\tfrac{2}{r}}z\bigr)= \cos_{\m}^N(z) \cdot \Bigl[2\sqrt{2r} \sin_{\m}^D(z) + (1-2r)\,z \cos_{\m}^N(z) \Bigr]. \]
Since $\l$ is an eigenvalue of the (DN) and the (ND) problem, $\cos_{\m}^N(\sqrt{\l})=0$. Then, $\sin_{\m}^D(\sqrt{\tfrac{2}{r}\l}\bigr)=0$ and thus, $\tfrac{2}{r}\l$ is a Dirichlet eigenvalue. From Propositions \ref{cpzsqzS1} and \ref{cpzsqzS2} we get for $x\in [0,1]$
\[ \s_{\l,\m}\Bigl(\sqrt{\tfrac{2}{r}\l}, S_1(x)\Bigr) = \sqrt{2r} \s_{\l,\m}(\sqrt{\l},x)\]
and
\[ \s_{\l,\m}\Bigl(\sqrt{\tfrac{2}{r}\l}, S_2(x)\Bigr) = \sqrt{2r} \sin_{\m}^D\bigl(\sqrt{\l}\bigr) \c_{\l,\m}\bigl( \sqrt{\l}, x \bigr),\]
which proves the proposition.
\end{proof}

In the following we treat only the Neumann eigenvalue problem for a (not necessarily symmetric) measure $\mu$ using Theorem \ref{th:ssnev}. With the formula
\begin{equation}\label{cpqspq1}
\cos_{\m}^D(z)\,\cos_{\m}^N(z)+ \sin_{\m}^D(z)\,\sin_{\m}^N(z) = 1,
\end{equation}
which follows from Theorem \ref{pythallg} by setting $x=1$, we rearrange the functional equations from Theorem \ref{th1}. With the
abbreviation
\begin{equation}\label{h1}
h(z):=r_1 \cos_{\m}^N(z)+ r_2\cos_{\m}^D(z)-\bigl[1-(r_1+r_2)\bigr] z \sin_{\m}^N(z)
\end{equation}
we can write
\begin{align}
\label{sinp2} \sin_{\m}^N(z)	&= \frac{\sqrt{r_1m_1}}{r_1r_2}\,\sin_{\m}^N\bigl(\sqrt{r_1m_1}z\bigr) \,h\bigl(\sqrt{r_1m_1}z\bigr),\\
\label{cosp2} \cos_{\m}^N(z)	&=	-\frac{r_2}{r_1}+\frac{1}{r_1}\,\cos_{\m}^N\bigl(\sqrt{r_1m_1}z\bigr) \,h\bigl(\sqrt{r_1m_1}z\bigr),
\intertext{and}
\label{sinq2}\sin_{\m}^D(z)	&= \bigl[1-(r_1+r_2)\bigr]z + \frac{1}{\sqrt{r_1m_1}}\, \sin_{\m}^D\bigl(\sqrt{r_1m_1}z\bigr)\,h\bigl(\sqrt{r_1m_1}z\bigr),\\
\cos_{\m}^D(z)	&= -\frac{r_1}{r_2} + \frac{1}{r_2} \cos_{\m}^D\bigl(\sqrt{r_1m_1}z\bigr) \,h\bigl(\sqrt{r_1m_1}z\bigr).
\end{align}

%\begin{lem}
%The functions $\sin_{\m}^N$ and $h$ have no common zero point in $\R$.
%\end{lem}
%\begin{proof}
%Suppose $z\in \R$ is a number with 
%\[\sin_{\m}^N(z)=0\]
%and
%\[h(z)= r_1 \cos_{\m}^N(z)+ r_2\cos_{\m}^D(z)-\bigl[1-(r_1+r_2)\bigr] z \sin_{\m}^N(z)=0.\]
%Then would $r_1 \cos_{\m}^N(z)+ r_2\cos_{\m}^D(z)=0$. On the other hand, it follows from \eqref{cpqspq1} that $\cos_{\m}^D(z) \cdot \cos_{\m}^N(z)=1$. This is clearly not possible.
%\end{proof}

Employing the above formulas we can calculate the values of $\cos_{\m}^N$, $\cos_{\m}^D$ and $\sin_{\m}^D$ at the zero points of $\sin_{\m}^N$.

\begin{lem}
Let $m\in\N$ and let $v(m)$ be the multiplicity of the prime factor $2$ in $m$. Let $z_m:=\sqrt{\l_{N,m}}$ be the square root of the $m$th Neumann eigenvalue, that is, the $m$th zero point of $\sin_{\m}^N$. Then
\begin{align}
\label{cospvm}\cos_{\m}^N(z_m)	&	= \Bigl(-\frac{r_2}{r_1}\Bigr)^{\ds 2^{v(m)}}\\
\label{cosqvm}\cos_{\m}^D(z_m)	&	= \Bigl(-\frac{r_1}{r_2}\Bigr)^{\ds 2^{v(m)}}\\
\label{sinqvm}\sin_{\m}^D(z_m)	& = a_{v{\scriptscriptstyle (}m{\scriptscriptstyle)}} \cdot z_m 
\end{align}
where $(a_k)_k$ is determined by
\[a_0= 1-(r_1+r_2)\]
and, for $k\in\N$,
\[a_k= 1-(r_1+r_2) + a_{k-1} \biggl(r_1 \Bigl(-\frac{r_2}{r_1}\Bigr)^{2^{k-1}} + r_2 \Bigl(-\frac{r_1}{r_2}\Bigr)^{2^{k-1}}\biggr).\]
\end{lem}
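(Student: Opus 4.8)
The plan is to argue by induction on $v(m)$, using the rearranged functional equations \eqref{sinp2}, \eqref{cosp2}, \eqref{sinq2} together with Theorem~\ref{th:ssnev}. Throughout, write $z_m=\sqrt{\l_{N,m}}$, so that by Proposition~\ref{pro:neumanneigenvalues} the non-negative zeros of $\sin_{\m}^N$ are exactly $0=z_0<z_1<z_2<\dots$, and Theorem~\ref{th:ssnev} reads $\sqrt{r_1m_1}\,z_{2m}=z_m$ for every $m\in\N$. Two preliminary remarks will be used repeatedly. First, since $\sin_{\m}^N(z_m)=0$, evaluating \eqref{cpqspq1} at $z=z_m$ gives $\cos_{\m}^N(z_m)\cos_{\m}^D(z_m)=1$; hence \eqref{cosqvm} is a formal consequence of \eqref{cospvm} and we may concentrate on the latter. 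Also, by the definition \eqref{h1} of $h$, the vanishing of $\sin_{\m}^N$ at $z_m$ gives $h(z_m)=r_1\cos_{\m}^N(z_m)+r_2\cos_{\m}^D(z_m)$. Second, if $m$ is odd then $\sqrt{r_1m_1}\,z_m$ is \emph{not} a zero of $\sin_{\m}^N$: otherwise $\sqrt{r_1m_1}\,z_m=z_j$ for some $j\ge1$, whence $z_m=z_j/\sqrt{r_1m_1}=z_{2j}$ by Theorem~\ref{th:ssnev} and therefore $m=2j$, contradicting the parity of $m$. Feeding $z=z_m$ into \eqref{sinp2} and cancelling the non-zero factor $\sin_{\m}^N(\sqrt{r_1m_1}z_m)$ then forces $h(\sqrt{r_1m_1}z_m)=0$ whenever $v(m)=0$.

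For the base case $v(m)=0$ I substitute $z=z_m$ into \eqref{cosp2} and into \eqref{sinq2}. Because $h(\sqrt{r_1m_1}z_m)=0$, the correction terms disappear and one reads off $\cos_{\m}^N(z_m)=-r_2/r_1=(-r_2/r_1)^{2^{0}}$ and $\sin_{\m}^D(z_m)=[1-(r_1+r_2)]z_m=a_0z_m$, which are \eqref{cospvm} and \eqref{sinqvm}; equation \eqref{cosqvm} then follows from $\cos_{\m}^N(z_m)\cos_{\m}^D(z_m)=1$.

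For the inductive step, assume the three formulas hold for every index of $2$-adic valuation $k-1$ and let $v(m)=k\ge1$. Then $m=2m''$ with $v(m'')=k-1$, and Theorem~\ref{th:ssnev} gives $\sqrt{r_1m_1}\,z_m=z_{m''}$, i.e. $z_{m''}/\sqrt{r_1m_1}=z_m$. Substituting $z=z_m$ into \eqref{cosp2} and using $h(z_{m''})=r_1\cos_{\m}^N(z_{m''})+r_2\cos_{\m}^D(z_{m''})$ together with $\cos_{\m}^N(z_{m''})\cos_{\m}^D(z_{m''})=1$, the right-hand side simplifies to $\cos_{\m}^N(z_{m''})^2$, so by the induction hypothesis $\cos_{\m}^N(z_m)=\bigl((-r_2/r_1)^{2^{k-1}}\bigr)^2=(-r_2/r_1)^{2^{k}}$, and \eqref{cosqvm} follows as before. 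Substituting $z=z_m$ into \eqref{sinq2}, using $\sin_{\m}^D(z_{m''})=a_{k-1}z_{m''}$, $z_{m''}/\sqrt{r_1m_1}=z_m$, and the now-known values of $\cos_{\m}^N(z_{m''})$ and $\cos_{\m}^D(z_{m''})$ inside $h(z_{m''})$, gives $\sin_{\m}^D(z_m)=z_m\bigl([1-(r_1+r_2)]+a_{k-1}\bigl(r_1(-r_2/r_1)^{2^{k-1}}+r_2(-r_1/r_2)^{2^{k-1}}\bigr)\bigr)$, which equals $a_kz_m$ by the recursion defining $(a_k)_k$. This completes the induction.

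The one step that is not mere bookkeeping is the structural fact used in the base case, namely that $h(\sqrt{r_1m_1}z_m)=0$ for odd $m$. Its proof needs the renormalization Theorem~\ref{th:ssnev} to identify precisely which dilated zeros of $\sin_{\m}^N$ remain zeros (the even-indexed ones) and which do not (the odd-indexed ones), combined with the factored form \eqref{sinp2}; everything after that is forced by \eqref{cosp2}, \eqref{sinq2}, \eqref{cpqspq1} and the definition of the sequence $(a_k)_k$.
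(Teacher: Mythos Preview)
Your proof is correct and follows essentially the same route as the paper's. The only cosmetic difference is in the inductive step for \eqref{cospvm}: the paper appeals directly to \eqref{th13} (which, together with $r_1m_1=r_2m_2$ and $\sin_{\m}^N(z_{m''})=0$, immediately gives $\cos_{\m}^N(z_m)=\cos_{\m}^N(z_{m''})^2$), whereas you substitute into the rearranged equation \eqref{cosp2} and then use $\cos_{\m}^N(z_{m''})\cos_{\m}^D(z_{m''})=1$ to cancel the $-r_2/r_1$ term; since \eqref{cosp2} is itself obtained from \eqref{th13} via \eqref{cpqspq1}, this amounts to the same computation done in a different order.
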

\begin{proof}
Suppose $m$ is odd. Then $\sin_{\m}^N(z_m)=0$ and $\sin_{\m}^N\bigl(\sqrt{r_1m_1}z_m\bigr) \neq 0$. To see this, suppose $\sin_{\m}^N\bigl(\sqrt{r_1m_1}z_m\bigr) = 0$. Then
$r_1m_1 z_m^2$ would be a Neumann eigenvalue, say $r_1m_1 z_m^2=\l_{N,l}$ for some $l\in\N$, and because of Theorem \ref{th:ssnev}, $z_m^2$ would be the eigenvalue $\l_{N,2l}$. Thus, $m=2l$, which is a contradiction.

Hence, it follows by  \eqref{sinp2} that $h\bigl(\sqrt{r_1m_1}z_m\bigr)=0$. Then, by \eqref{cosp2}, $\cos_{\m}^N(z_m)=-\dfrac{r_2}{r_1}$.

By \eqref{th13} follows that, for all $z\in\R$, if
\[ \sin_{\m}^N\bigl(\sqrt{r_1m_1}z\bigr)=0, \]
then
\[ \cos_{\m}^N(z)=\cos_{\m}^N\bigl(\sqrt{r_1m_1}z\bigr)^2.\]
Thus, if $m= 2 l$ for some odd $l$, then $\sqrt{r_1m_1}z_m = z_l$ and hence, 
\[\cos_{\m}^N(z_m)=\cos_{\m}^N (z_l)^2 = \Bigl(-\frac{r_2}{r_1}\Bigr)^2.\]
 Iteratively, we get that, if $m=2^kl$ for some odd $l$,
\[\cos_{\m}^N(z_m) = \Bigl(-\frac{r_2}{r_1}\Bigr)^{2^k},\]
which proves \eqref{cospvm}.

Since $\sin_{\m}^N(z_m)=0$ for all $m\in\N$ we get by \eqref{cpqspq1} that
\[\cos_{\m}^D(z_m) = \frac{1}{\cos_{\m}^N(z_m)} = \Bigl(-\frac{r_1}{r_2}\Bigr)^{2^{v(m)}},\]
which is \eqref{cosqvm}.

Now we show \eqref{sinqvm}. At first, suppose $v(m)=0$, that is, $m$ is odd. Then, as above, $h\bigl(\sqrt{r_1m_1}z_m\bigr)=0$ and thus, by \eqref{sinq2},
\[\sin_{\m}^D(z_m)=\bigl[1-(r_1+r_2)\bigr] z_m.\]
Observe that we have for all $m$
\begin{equation}\label{h1vm}
h(z_m) = r_1 \Bigl(-\frac{r_2}{r_1}\Bigr)^{2^{v(m)}} + r_2 \Bigl(-\frac{r_1}{r_2}\Bigr)^{2^{v(m)}}.
\end{equation}
Suppose $v(m)\geq 1$. Then $\sqrt{r_1m_1}z_m=z_{\scriptscriptstyle\frac{m}{2}}$ and thus,
\begin{align*}
\frac{\sin_{\m}^D(z_m)}{z_m}	&	= 1-(r_1+r_2) + \frac{\sin_{\m}^D\bigl(\sqrt{r_1m_1}z_m\bigr)}{\sqrt{r_1m_1}z_m} h\bigl(\sqrt{r_1m_1}z_m\bigr)\\
												&	= 1-(r_1+r_2) + \frac{\sin_{\m}^D\bigl(z_{\scriptscriptstyle\frac{m}{2}}\bigr)}{z_{\scriptscriptstyle\frac{m}{2}}}
																																										h\bigl(z_{\scriptscriptstyle\frac{m}{2}}\bigr)\\
												& = 1-(r_1+r_2) + \frac{\sin_{\m}^D\bigl(z_{\scriptscriptstyle\frac{m}{2}}\bigr)}{z_{\scriptscriptstyle\frac{m}{2}}}
												\biggl( r_1 \Bigl(-\frac{r_2}{r_1}\Bigr)^{2^{v(m)-1}} + r_2 \Bigl(-\frac{r_1}{r_2}\Bigr)^{2^{v(m)-1}}\biggr).
\end{align*}
Hence, $\dfrac{\sin_{\m}^D(z_m)}{z_m}$ depends only on $v(m)$ and so, with $a_{v(m)} = \dfrac{\sin_{\m}^D(z_m)}{z_m}$, we get
\[a_{v(m)} = 1-(r_1+r_2) + a_{v(m)-1}	\biggl( r_1 \Bigl(-\frac{r_2}{r_1}\Bigr)^{2^{v(m)-1}} + r_2 \Bigl(-\frac{r_1}{r_2}\Bigr)^{2^{v(m)-1}}\biggr),\]
which proves the assertion.
\end{proof}

We use the above computed values of $\cos_{\m}^N(z_m)$ and Propositions \ref{cpzsqzS1} and \ref{cpzsqzS2} to get a relation between the $m$th and the $2m$th  Neumann eigenfunction.

\begin{pro}\label{proreleigfunct2mm}
	Let $m\in\N$ and $v(m)$ be the $2$-multiplicity of $m$. We denote the $m$th Neumann eigenfunction by $f_m:=\c_{\l,\m}(z_m, \cdot )$. Then, for all $x\in [0,1]$,
	\begin{equation}\label{f2mS1fm}
	f_{2m}\bigl(S_1(x)\bigr) = f_m (x)
	\end{equation}
	and
	\begin{equation}\label{f2mS2fm}
	f_{2m}\bigl(S_2(x)\bigr) = \Bigl(-\frac{m_1}{m_2}\Bigr)^{2^{v(m)}} \, f_m(x).
	\end{equation}
\end{pro}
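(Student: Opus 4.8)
The plan is to substitute $z=z_{2m}:=\sqrt{\l_{N,2m}}$ into the two scaling identities \eqref{cpzS1} and \eqref{cpzS2} for $\c_{\l,\m}$ and then to exploit the hypothesis $r_1m_1=r_2m_2$ to collapse the arguments, together with the vanishing $\sin_{\m}^N(z_m)=0$.

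First I would record the arithmetic consequence of Theorem \ref{th:ssnev}: from $r_1m_1\,\l_{N,2m}=\l_{N,m}$ we get $\sqrt{r_1m_1}\,z_{2m}=z_m$, and since $r_1m_1=r_2m_2$ this forces $\sqrt{r_2m_2}\,z_{2m}=z_m$ as well. For \eqref{f2mS1fm} I then simply evaluate \eqref{cpzS1} at $z=z_{2m}$, obtaining
\[ f_{2m}\bigl(S_1(x)\bigr)=\c_{\l,\m}\bigl(z_{2m},S_1(x)\bigr)=\c_{\l,\m}\bigl(\sqrt{r_1m_1}\,z_{2m},x\bigr)=\c_{\l,\m}(z_m,x)=f_m(x). \]

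For \eqref{f2mS2fm} I would evaluate \eqref{cpzS2} at $z=z_{2m}$. By the remark above, every occurrence of $\sqrt{r_1m_1}\,z_{2m}$ and of $\sqrt{r_2m_2}\,z_{2m}$ equals $z_m$, so the right-hand side becomes
\begin{align*}
f_{2m}\bigl(S_2(x)\bigr) &= \cos_{\m}^N(z_m)\,\c_{\l,\m}(z_m,x)-\sqrt{\tfrac{r_2m_1}{r_1m_2}}\,\sin_{\m}^N(z_m)\,\s_{\l,\m}(z_m,x)\\
&\quad - [1-(r_1+r_2)]\sqrt{\tfrac{m_1}{r_1}}\,z_{2m}\,\sin_{\m}^N(z_m)\,\c_{\l,\m}(z_m,x).
\end{align*}
Since $z_m$ is, by Proposition \ref{pro:neumanneigenvalues}, a zero of $\sin_{\m}^N$, the last two summands vanish and $f_{2m}\bigl(S_2(x)\bigr)=\cos_{\m}^N(z_m)\,f_m(x)$. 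It then remains to insert $\cos_{\m}^N(z_m)=(-r_2/r_1)^{2^{v(m)}}$ from \eqref{cospvm} and to observe that $r_1m_1=r_2m_2$ yields $r_2/r_1=m_1/m_2$, so that $\cos_{\m}^N(z_m)=(-m_1/m_2)^{2^{v(m)}}$, which is precisely the asserted factor.

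The argument is essentially bookkeeping and I do not foresee a genuine obstacle; the only two points needing care are exactly the two places where the hypothesis $r_1m_1=r_2m_2$ is used: that the two distinct square-root arguments $\sqrt{r_1m_1}\,z_{2m}$ and $\sqrt{r_2m_2}\,z_{2m}$ coincide (so that the two a priori different ``copies'' of $\c_{\l,\m}$ and $\s_{\l,\m}$ appearing in \eqref{cpzS2} collapse to quantities at the single point $z_m$), and the final rewriting $(-r_2/r_1)^{2^{v(m)}}=(-m_1/m_2)^{2^{v(m)}}$. One could alternatively argue by the uniqueness of eigenfunctions up to a constant, since both sides of each claimed identity, read as functions of $x$, satisfy $\frac{d}{d\m}g'=-\l_{N,m}\,g$ together with the matching left boundary condition; but the direct substitution above is shorter and also yields the explicit constant.
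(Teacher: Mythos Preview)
Your proof is correct and follows essentially the same route as the paper: substitute $z=z_{2m}$ into Propositions \ref{cpzsqzS1} and \ref{cpzsqzS2}, use $\sqrt{r_1m_1}\,z_{2m}=\sqrt{r_2m_2}\,z_{2m}=z_m$ and $\sin_{\m}^N(z_m)=0$ to reduce to $f_{2m}(S_2(x))=\cos_{\m}^N(z_m)\,f_m(x)$, and then invoke \eqref{cospvm} together with $r_2/r_1=m_1/m_2$. Your write-up is in fact a bit more explicit than the paper's about where the hypothesis $r_1m_1=r_2m_2$ enters.
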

\begin{proof}
Because of Theorem \ref{th:ssnev} we have $\l_m = r_1m_1\l_{2m}$ and thus, 
\[\sin_{\m}^N\bigl(\sqrt{r_1m_1} z_{2m}\bigr) = 0.\]
Since	$f_m=\c_{\l,\m}(z_m, \cdot )$, Propositions \ref{cpzsqzS1} and \ref{cpzsqzS2} give for $x\in[0,1]$
\[f_{2m}\bigl(S_1(x)\bigr) = f_m (x)\]
and
\[f_{2m}\bigl(S_2(x)\bigr) = \cos_{\m}^N (z_m) \, f_m(x).\]
Noting that $\dfrac{r_2}{r_1}=\dfrac{m_1}{m_2}$, we get with \eqref{cospvm} that
\[f_{2m}\bigl(S_2(x)\bigr) = \Bigl(-\frac{m_1}{m_2}\Bigr)^{2^{v(m)}} \, f_m(x).\]
\end{proof}

The above proposition can be employed to work out the relationship between the suprema and the $L_2(\m)$ norms of $f_{m}$ and $f_{2m}$.
\begin{pro}\label{pronormrel}
Let $m\in\N$ and $v(m)$ the $2$-multiplicity of $m$. Then
\begin{align}
\label{l2normrel}\norm{f_{2m}}{L_2(\m)}^2	& = \biggl(m_1+ m_2 \Bigl(\frac{m_1}{m_2}\Bigr)^{2^{v(m)+1}}\biggr) \norm{f_m}{L_2(\m)}^2\\
\intertext{and}
\label{supnorm}\norm{f_{2m}}{\infty}			& = \max\Bigl\{1,\Bigl(\frac{m_1}{m_2}\Bigr)^{2^{v(m)}}\Bigr\} \norm{f_m}{\infty}.
\end{align}
\end{pro}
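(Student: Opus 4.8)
The plan is to deduce Proposition~\ref{pronormrel} directly from Proposition~\ref{proreleigfunct2mm} together with the two structural facts about the self-similar setting: the decomposition $K=S_1(K)\cup S_2(K)$ of the attractor, and the invariance identity $\m=m_1(S_1\m)+m_2(S_2\m)$ for the measure. Since $\m$ is non-atomic and $S_1(K)\cap S_2(K)$ consists of at most one point, that intersection is $\m$-null, so the two pieces of $K$ do not interfere in $L_2(\m)$. For the $L_\infty$-statement I first record that $f_{2m}=\c_{\l,\m}(z_{2m},\cdot)$ is continuous on $[0,1]$ and affine on every component $(a,b)$ of $[0,1]\setminus K$: indeed on such an interval $\s_{\m,\l}(z_{2m},\cdot)$ is constant, because by Lemma~\ref{diffrules} it is obtained by integrating $z_{2m}\,\c_{\l,\m}(z_{2m},\cdot)$ against $\m$ over a set of $\m$-measure zero, whence $\c'_{\l,\m}(z_{2m},\cdot)=-z_{2m}\,\s_{\m,\l}(z_{2m},\cdot)$ is constant there. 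Consequently $\abs{f_{2m}}$ attains its maximum over $[0,1]$ at a point of $K$ (the gap endpoints lie in $K$), so $\norm{f_{2m}}{\infty}=\sup_{x\in K}\abs{f_{2m}(x)}$, and likewise for $f_m$.

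For \eqref{l2normrel}, I would split the integral using the invariance of $\m$:
\[ \norm{f_{2m}}{L_2(\m)}^2=\int_0^1 f_{2m}^2\,d\m = m_1\int_0^1 f_{2m}\bigl(S_1(x)\bigr)^2\,d\m(x)+m_2\int_0^1 f_{2m}\bigl(S_2(x)\bigr)^2\,d\m(x). \]
Now substitute the identities \eqref{f2mS1fm} and \eqref{f2mS2fm} of Proposition~\ref{proreleigfunct2mm}. Since $\bigl((-m_1/m_2)^{2^{v(m)}}\bigr)^2=(m_1/m_2)^{2^{v(m)+1}}$, the right-hand side equals
\[ m_1\int_0^1 f_m^2\,d\m + m_2\Bigl(\tfrac{m_1}{m_2}\Bigr)^{2^{v(m)+1}}\int_0^1 f_m^2\,d\m = \biggl(m_1+m_2\Bigl(\tfrac{m_1}{m_2}\Bigr)^{2^{v(m)+1}}\biggr)\norm{f_m}{L_2(\m)}^2, \]
which is the claimed formula.

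For \eqref{supnorm}, I would use $\norm{f_{2m}}{\infty}=\sup_{x\in K}\abs{f_{2m}(x)}$ and the decomposition $K=S_1(K)\cup S_2(K)$ to write
\[ \norm{f_{2m}}{\infty}=\max\Bigl\{\sup_{y\in K}\bigl\lvert f_{2m}(S_1(y))\bigr\rvert,\ \sup_{y\in K}\bigl\lvert f_{2m}(S_2(y))\bigr\rvert\Bigr\}. \]
By \eqref{f2mS1fm} the first supremum is $\sup_{y\in K}\abs{f_m(y)}=\norm{f_m}{\infty}$, and by \eqref{f2mS2fm} the second is $(m_1/m_2)^{2^{v(m)}}\norm{f_m}{\infty}$, the sign disappearing under the absolute value; hence $\norm{f_{2m}}{\infty}=\max\{1,(m_1/m_2)^{2^{v(m)}}\}\norm{f_m}{\infty}$. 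The only genuinely non-mechanical point in this argument is the reduction of the $[0,1]$-supremum to the $K$-supremum, i.e.\ controlling $f_{2m}$ on the gaps of $K$; once the affine-on-gaps observation is in place, both equalities are immediate substitutions into Proposition~\ref{proreleigfunct2mm}.
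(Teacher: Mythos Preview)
Your proof is correct and follows essentially the same approach as the paper's. The $L_2$ argument is identical. For the $L_\infty$ part there is only a cosmetic organizational difference: you first reduce $\norm{f_{2m}}{\infty}$ to a supremum over $K$ (using affineness on \emph{all} gaps) and then split $K=S_1(K)\cup S_2(K)$, whereas the paper works directly with the full intervals $[0,r_1]$, $[1-r_2,1]$ via the bijections $S_1,S_2\colon[0,1]\to[0,r_1],[1-r_2,1]$ and therefore only needs to invoke linearity on the single central gap $[r_1,1-r_2]$; both routes are equally valid and rest on the same ingredients.
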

\begin{proof}
	At first we prove \eqref{l2normrel}. For $m\in\N$ we have
	\begin{align*}
	\norm{f_{2m}}{L_2(\m)}^2	& = \int_{S_1(0)}^{S_1(1)} f_{2m}(t)^2\,d\m(t) + \int_{S_2(0)}^{S_2(1)}  f_{2m}(t)^2\,d\m(t)\\
														& = m_1\int_{S_1(0)}^{S_1(1)} f_{2m}(t)^2\,d(S_1\m)(t) + m_2 \int_{S_2(0)}^{S_2(1)}  f_{2m}(t)^2\,d(S_2\m)(t)\\
														& = m_1\int_0^1 f_{2m}\bigl(S_1(t)\bigr)^2\,d\m(t) + m_2 \int_0^1  f_{2m}\bigl(S_2(t)\bigr)^2\,d\m(t).
	\end{align*}
	By \eqref{f2mS1fm} and \eqref{f2mS2fm} we get
	\begin{align*}
	\norm{f_{2m}}{L_2(\m)}^2	& = m_1\int_0^1 f_{m}(t)^2\,d\m(t) + m_2 \Bigl(-\frac{m_1}{m_2}\Bigr)^{2^{v(m)+1}} \int_0^1  f_m(t)^2\,d\m(t)\\
														& = \Bigl[m_1+ m_2 \Bigl(\frac{m_1}{m_2}\Bigr)^{2^{v(m)+1}}\Bigr] \norm{f_m}{L_2(\m)}^2.
	\end{align*}
	Now we show \eqref{supnorm}. With \eqref{f2mS1fm} and \eqref{f2mS2fm} we have
	\[ \sup_{x\in [S_1(0),S_1(1)]} \abs{f_{2m}(x)} = \sup_{x\in [0,1]} \bigabs{f_{2m}\bigl(S_1(x)\bigr)} = \sup_{x\in [0,1]} \abs{f_{m}(x)} = \norm{f_m}{\infty}\]
	and
	\begin{align*}
	\sup_{x\in [S_2(0),S_2(1)]} \abs{f_{2m}(x)} &= \sup_{x\in [0,1]} \bigabs{f_{2m}\bigl(S_2(x)\bigr)} = \Bigl(\frac{m_1}{m_2}\Bigr)^{2^{v(m)}}\norm{f_m}{\infty}.
	\end{align*}
	Therefore, since $f_{2m}$ is linear on $[S_1(1), S_2(0)]$ and continuous,
	\[\sup_{x\in[0,1]} \abs{f_{2m}(x)} = \max\Bigl\{1,\Bigl(\frac{m_1}{m_2}\Bigr)^{2^{v(m)}}\Bigr\} \norm{f_m}{\infty}.\]
\end{proof}

Now we consider the normalized Neumann eigenfunctions. For $m\in\N_0$ we set 
\[\tilde{f}_m := \norm{f_m}{L_2(\m)}^{-1} f_m.\]
We are interested in the asymptotic behaviour of the sequence $(\norm{\tilde{f}_m}{\infty})_m$. With Proposition \ref{pronormrel} we get some information about certain subsequences stated in the following theorem.

\pagebreak
\begin{theo}
Let $\m$ be a self-similar measure with $r_1m_1=r_2m_2$. Then, for all $m\in\N_0$,
\begin{equation}\label{normsupnormneumann}
 \norm{\tilde{f}_{2m}}{\infty} = \frac{\max\Bigl\{1,\bigl(\frac{m_1}{m_2}\bigr)^{2^{v(m)}}\Bigr\}}{\sqrt{m_1+ m_2 \bigl(\frac{m_1}{m_2}\bigr)^{2^{v(m)+1}}}} \norm{\tilde{f}_m}{\infty}.
\end{equation}
Suppose $m_1 \leq m_2$ and let $l$ be an odd number. Then, for all $k\in\N$,
\begin{equation}\label{supnormproductformula}
\norm{\tilde{f}_{2^k l}}{\infty} = m_1^{-\frac{k}{2}} \prod_{j=1}^k \Bigl( 1+ \Bigl(\frac{m_1}{m_2}\Bigr)^{2^j-1}\Bigr)^{-\frac{1}{2}} \,\norm{\tilde{f}_l}{\infty}.
\end{equation}
\end{theo}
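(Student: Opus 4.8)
The plan is to derive both identities directly from Proposition~\ref{pronormrel}, since $\tilde{f}_m$ is just $f_m$ rescaled to unit $L_2(\m)$-norm. Recall that by definition $\tilde{f}_m = \norm{f_m}{L_2(\m)}^{-1} f_m$, so that
\[ \norm{\tilde{f}_m}{\infty} = \frac{\norm{f_m}{\infty}}{\norm{f_m}{L_2(\m)}} \]
whenever $f_m \not\equiv 0$. (For $m=0$ we have $z_0 = 0$, hence $f_0 \equiv 1$ and $\norm{\tilde{f}_0}{\infty}=1$, so there is nothing to prove; below I take $m\in\N$.)

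First I would establish \eqref{normsupnormneumann}. Fix $m\in\N$. Combining the displayed formula above with \eqref{supnorm} and \eqref{l2normrel} of Proposition~\ref{pronormrel} gives
\[ \norm{\tilde{f}_{2m}}{\infty} = \frac{\norm{f_{2m}}{\infty}}{\norm{f_{2m}}{L_2(\m)}} = \frac{\max\bigl\{1, (m_1/m_2)^{2^{v(m)}}\bigr\}\,\norm{f_m}{\infty}}{\sqrt{m_1 + m_2 (m_1/m_2)^{2^{v(m)+1}}}\,\norm{f_m}{L_2(\m)}}, \]
and dividing the numerator and denominator of the last fraction by $\norm{f_m}{L_2(\m)}$ produces exactly \eqref{normsupnormneumann}.

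Next I would obtain \eqref{supnormproductformula} by iterating \eqref{normsupnormneumann} along the doubling chain $l,\,2l,\,4l,\,\dots,\,2^k l$. Here the relevant facts are: since $l$ is odd, $v(2^{j-1}l) = j-1$ for each $j\ge 1$; since $m_1\le m_2$ one has $m_1/m_2\le 1$, so $\max\{1,(m_1/m_2)^{2^{j-1}}\}=1$ at every stage; and the denominator rewrites as
\[ m_1 + m_2\Bigl(\frac{m_1}{m_2}\Bigr)^{2^{j}} = m_1\Bigl(1 + \frac{m_2}{m_1}\Bigl(\frac{m_1}{m_2}\Bigr)^{2^{j}}\Bigr) = m_1\Bigl(1 + \Bigl(\frac{m_1}{m_2}\Bigr)^{2^{j}-1}\Bigr). \]
Applying \eqref{normsupnormneumann} with $m = 2^{j-1}l$ therefore gives
\[ \norm{\tilde{f}_{2^{j}l}}{\infty} = m_1^{-1/2}\Bigl(1 + \bigl(\tfrac{m_1}{m_2}\bigr)^{2^{j}-1}\Bigr)^{-1/2}\,\norm{\tilde{f}_{2^{j-1}l}}{\infty}, \]
and multiplying these relations for $j=1,\dots,k$ telescopes to \eqref{supnormproductformula}.

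There is no genuine obstacle: the theorem is essentially a repackaging of Proposition~\ref{pronormrel}. The only steps that need any attention are the elementary factorization of the denominator displayed above and keeping careful track of the $2$-adic valuation $v(2^{j-1}l)=j-1$ as one moves down the chain; it is also worth noting that the hypothesis $m_1\le m_2$ is used precisely to ensure that the maximum in the numerator of \eqref{normsupnormneumann} equals $1$ at each step, which is what makes every factor take the stated shape.
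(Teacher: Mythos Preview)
Your proof is correct and follows exactly the route taken in the paper: \eqref{normsupnormneumann} is obtained by dividing the two relations of Proposition~\ref{pronormrel}, and \eqref{supnormproductformula} is obtained by iterating \eqref{normsupnormneumann} along the chain $l,2l,\dots,2^kl$, using $m_1\le m_2$ to kill the maximum and the factorization $m_1+m_2(m_1/m_2)^{2^j}=m_1\bigl(1+(m_1/m_2)^{2^j-1}\bigr)$. Your write-up is in fact more explicit than the paper's, which only displays the first two iterations and then says ``and so on''.
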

\begin{proof}
\eqref{normsupnormneumann} follows directly from \eqref{l2normrel} and \eqref{supnorm}. Suppose $m_1\leq m_2$ and $l\in \N$ is odd. Then iterative application of \eqref{normsupnormneumann} gives
\[ \norm{\tilde{f}_{2l}}{\infty} = \frac{1}{\sqrt{m_1}}\frac{1}{\sqrt{1+\frac{m_1}{m_2}}} \norm{\tilde{f}_l}{\infty},\]
\[ \norm{\tilde{f}_{2^2 l}}{\infty} = \frac{1}{\sqrt{m_1}}\frac{1}{\sqrt{1+\bigl(\frac{m_1}{m_2}\bigr)^3}}\frac{1}{\sqrt{m_1}}\frac{1}{\sqrt{1+\frac{m_1}{m_2}}} \norm{\tilde{f}_l}{\infty},\]
and so on, and therefore \eqref{supnormproductformula} holds.
\end{proof}

\begin{cor}\label{cor:upperandlowerest}
Let $l\in\N$ be odd. Then the following statements hold.
\begin{enumerate}
	\item\label{item1}
		If $m_1=m_2$, then for all $k\in\N$,
		\[\norm{\tilde{f}_{2^k l}}{\infty}=\norm{\tilde{f}_{l}}{\infty}.\]
	\item\label{item2}
		If $m_1 < m_2$, then $C:= \dfrac{1}{\sqrt{m_1 \bigl(1+\frac{m_1}{m_2} \bigr)}}>1$, and we have for all $k\in\N$,
		\[\norm{\tilde{f}_{2^k l}}{\infty} \geq C^k \norm{\tilde{f}_{l}}{\infty}.\]
		Additionally, for all $k\in \N$,
		\[\norm{\tilde{f}_{2^k l}}{\infty} \leq m_1^{-\frac{k}{2}} \biggl(\frac{m_2}{m_1}\biggr)^{\frac{k}{2}(2^k-1)}\norm{\tilde{f}_{l}}{\infty}.\]
\end{enumerate}
\end{cor}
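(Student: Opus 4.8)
The plan is to read everything off from the product formula \eqref{supnormproductformula} of the preceding theorem, which applies in both cases since there $m_1\le m_2$, and to estimate the individual factors $\bigl(1+(m_1/m_2)^{2^j-1}\bigr)^{-1/2}$ appropriately. For \ref{item1} I would set $m_1=m_2=\tfrac12$; then $m_1/m_2=1$, so each of the $k$ factors equals $2^{-1/2}$ while the prefactor is $m_1^{-k/2}=2^{k/2}$, and the coefficient in \eqref{supnormproductformula} collapses to $2^{k/2}\cdot 2^{-k/2}=1$, giving $\norm{\tilde f_{2^kl}}{\infty}=\norm{\tilde f_l}{\infty}$.

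For \ref{item2} I would first record the identity $m_1\bigl(1+\tfrac{m_1}{m_2}\bigr)=m_1\cdot\tfrac{m_1+m_2}{m_2}=\tfrac{m_1}{m_2}$, which uses $m_1+m_2=1$; hence $C=\sqrt{m_2/m_1}$, which is $>1$ because $m_1<m_2$. For the lower bound I would use that $0<m_1/m_2<1$ and $2^j-1\ge1$ for $j\ge1$, so $(m_1/m_2)^{2^j-1}\le m_1/m_2$ and therefore $\bigl(1+(m_1/m_2)^{2^j-1}\bigr)^{-1/2}\ge\bigl(1+m_1/m_2\bigr)^{-1/2}$ for every $j$; multiplying these $k$ inequalities together and reinstating $m_1^{-k/2}$ gives, by the identity above,
\[ \norm{\tilde f_{2^kl}}{\infty}\;\ge\;\Bigl(m_1^{-1/2}\bigl(1+\tfrac{m_1}{m_2}\bigr)^{-1/2}\Bigr)^{k}\norm{\tilde f_l}{\infty}=C^{k}\norm{\tilde f_l}{\infty}. \]

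For the upper bound I would instead bound every factor by its value at $j=k$: for $1\le j\le k$ one has $2^j-1\le 2^k-1$, hence $(m_1/m_2)^{2^j-1}\ge(m_1/m_2)^{2^k-1}$, so $1+(m_1/m_2)^{2^j-1}\ge(m_1/m_2)^{2^k-1}$ and thus $\bigl(1+(m_1/m_2)^{2^j-1}\bigr)^{-1/2}\le(m_2/m_1)^{(2^k-1)/2}$; taking the product over $j=1,\dots,k$ and restoring $m_1^{-k/2}$ yields precisely $m_1^{-k/2}(m_2/m_1)^{k(2^k-1)/2}\norm{\tilde f_l}{\infty}$. There is no genuine obstacle here; the only point needing care is choosing the factor estimates so as to reproduce the exact expressions claimed (uniformly bounding by the $j=1$ term below and by the $j=k$ term above), and one may note in passing that the resulting upper bound is far from sharp for $k\ge2$.
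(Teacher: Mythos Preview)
Your proof is correct and follows essentially the same route as the paper: both parts are read off from the product formula \eqref{supnormproductformula}, with the lower bound obtained by replacing each factor by the $j=1$ term and the upper bound by replacing each factor by the $j=k$ term. Your observation that $m_1\bigl(1+\tfrac{m_1}{m_2}\bigr)=\tfrac{m_1}{m_2}$ (hence $C=\sqrt{m_2/m_1}$) is a slightly cleaner way to see $C>1$ than the paper's phrasing, but otherwise the arguments coincide.
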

\begin{proof}
\ref{item1} follows directly from \eqref{supnormproductformula} by putting $m_1=m_2=\frac{1}{2}$.

If $m_1<m_2$, then, for all $j\in\N$,
\[  1+ \Bigl(\frac{m_1}{m_2}\Bigr)^{2^j-1} \leq 1+\frac{m_1}{m_2}.\]
Then,
\[ \norm{\tilde{f}_{2^k l}}{\infty} \geq  m_1^{-\frac{k}{2}} \Bigl(1+\frac{m_1}{m_2}\Bigr)^{-\frac{k}{2}} \,\norm{\tilde{f}_l}{\infty},\]
and since $m_1<m_2$ implies $m_1 < \frac{1}{2}$, we have $m_1 \bigl(1+\frac{m_1}{m_2} \bigr)<1$.

For the upper estimate, we write
\[\prod_{j=1}^k \biggl( 1+ \Bigl(\frac{m_1}{m_2}\Bigr)^{2^j-1}\biggr) \geq \biggl( 1+ \Bigl(\frac{m_1}{m_2}\Bigr)^{2^k-1}\biggr)^k
 \geq \Bigl(\frac{m_1}{m_2}\Bigr)^{k(2^k-1)},\]
which proves \ref{item2}.
\end{proof}

\section{Self-similar measures with $r_1m_1=r_2m_2$ and $r_1+r_2=1$}
\label{r1r2g1}

As in the previous section we have the condition $r_1m_1=r_2m_2$. 
We treat the special case where $r_1+r_2=1$ from which follows that $r_1=m_2$ and $r_2=m_1$. Such measures have been investigated e.g. by Sabot \cite{sabot:density} and \cite{sabot:higher}.

\begin{theo}\label{th:DirandNeusame}
Let $\m$ be a self-similar measure where $r_1=m_2$ and $r_2=m_1$ (and therefore $r_1+r_2=1$). Then the positive eigenvalues of $-\dfrac{d}{d\m}\dfrac{d}{x}$ with Neumann boundary conditions coincide with those with Dirichlet boundary conditions.
\end{theo}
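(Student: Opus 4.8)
The plan is to prove the sharper statement that $\sin_{\m}^N=\sin_{\m}^D$ as functions on $\R$; the assertion about eigenvalues then drops out immediately, because by Propositions \ref{pro:neumanneigenvalues} and \ref{pro:dirichleteigenvalues} the positive Neumann eigenvalues are exactly the squares of the positive zeros of $\sin_{\m}^N$, while the Dirichlet eigenvalues are exactly the squares of the positive zeros of $\sin_{\m}^D$.

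First I would unpack what the hypothesis buys. From $r_1=m_2$ and $r_2=m_1$ one gets $r_1+r_2=1$ and $r_1m_1=r_2m_2=r_1r_2=:\rho^2$, with $0<\rho<1$ since $\rho^2=m_1m_2\le\tfrac14$. Hence in \eqref{h1} the last term vanishes, so $h(z)=r_1\cos_{\m}^N(z)+r_2\cos_{\m}^D(z)$ and in particular $h(0)=r_1+r_2=1$. Now feed this into the rearranged functional equations \eqref{sinp2} and \eqref{sinq2}: the prefactor in \eqref{sinp2} becomes $\sqrt{r_1m_1}/(r_1r_2)=\rho/\rho^{2}=1/\rho$, and the additive term $[1-(r_1+r_2)]z$ in \eqref{sinq2} disappears, so both equations collapse to the \emph{same} functional equation
\[ f(z)=\tfrac1\rho\,f(\rho z)\,h(\rho z),\]
satisfied simultaneously by $f=\sin_{\m}^N$ and by $f=\sin_{\m}^D$.

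The remaining step is a rigidity argument on power series. Put $g:=\sin_{\m}^N-\sin_{\m}^D$. By linearity $g$ is an entire odd function satisfying $g(z)=\tfrac1\rho g(\rho z)h(\rho z)$; moreover, by the normalization conditions in Theorem \ref{th1} both $\sin_{\m}^N$ and $\sin_{\m}^D$ equal $z+O(z^3)$ near $0$, so $g$ vanishes to order at least $3$ at the origin. Suppose $g\not\equiv0$ and let $c_Nz^{2N+1}$, with $N\ge1$ and $c_N\ne0$, be the lowest-order term of its Taylor expansion. Comparing the coefficients of $z^{2N+1}$ on the two sides of $g(z)=\tfrac1\rho g(\rho z)h(\rho z)$ — where, since all Taylor coefficients of $g$ below order $2N+1$ vanish, only the constant term $h(0)=1$ of $h$ contributes at this order — yields $c_N=\tfrac1\rho\,c_N\rho^{2N+1}=c_N\rho^{2N}$, hence $\rho^{2N}=1$, impossible for $0<\rho<1$ and $N\ge1$. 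Therefore $g\equiv0$, i.e.\ $\sin_{\m}^N=\sin_{\m}^D$, and the theorem follows.

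I do not expect a genuine obstacle; the one point deserving care is verifying that \eqref{sinp2} and \eqref{sinq2} really do reduce to the identical equation under the hypothesis — that is, the two elementary identities $\sqrt{r_1m_1}/(r_1r_2)=1/\sqrt{r_1m_1}$ (equivalent to $r_2=m_1$) and $1-(r_1+r_2)=0$ — after which everything is forced. A more computational alternative would be to substitute $r_1=m_2$, $r_2=m_1$ into the recursions \eqref{rec1} and \eqref{rec3} of Corollary \ref{rec} and prove $p_{2n+1}=q_{2n+1}$ for all $n$ by induction (only the odd coefficients matter for the eigenvalues); but since $\m$ need not be symmetric this bookkeeping is clumsier than the rigidity argument above, so I would present the functional-equation proof.
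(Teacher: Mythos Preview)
Your proof is correct and takes a genuinely different route from the paper. The paper proceeds exactly by the ``computational alternative'' you mention at the end: it substitutes $r_1=m_2$, $r_2=m_1$ into the recursions \eqref{rec1} and \eqref{rec3}, reverses the order of summation in one of them, and proves $p_{2n+1}=q_{2n+1}$ by induction on $n$. Your argument instead exploits the functional equations \eqref{sinp2} and \eqref{sinq2} from Section~\ref{sec:r1m1gleich}, observing that under the hypothesis they collapse to the \emph{same} linear functional equation $f(z)=\rho^{-1}f(\rho z)h(\rho z)$, and then deduces uniqueness (given the normalization of the leading coefficient) from the contraction $0<\rho<1$. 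Your approach is cleaner and more conceptual --- it explains \emph{why} the two functions must agree rather than verifying it term by term --- but it leans on the machinery of Section~\ref{sec:r1m1gleich}, in particular on the rearrangement via the Pythagorean identity \eqref{cpqspq1}. The paper's induction is more self-contained, needing only Corollary~\ref{rec}, at the cost of some index bookkeeping. Both reach the same intermediate conclusion $\sin_{\m}^N=\sin_{\m}^D$, from which the claim about eigenvalues follows immediately.
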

\begin{proof}
Since the eigenvalues are the squares of the zeros of $\sin_{\m}^N$ and $\sin_{\m}^D$, respectively, it is sufficient to show that $\sin_{\m}^N = \sin_{\m}^D$. To do that we show that for all $n\in\N_0$ 
\[ p_{2n+1}=q_{2n+1}.\]
We do this by complete induction using the recursion formulas from Corollary \ref{rec}. By Definition \ref{defpq} we have
\[ p_1 = \int_0^1 \, d\m = 1\]
and
\[ q_1 = \int_0^1 \, dt = 1.\]
Now, let $n\in\N$ and suppose that for $i=0,\dotsc,n-1$ holds $p_{2i+1}=q_{2i+1}$. By \eqref{rec1} and rearrangement of the order of the terms in the sums we get
\begin{align*}
p_{2n+1}		& = \frac{1}{1-m_2^n r_2^{n+1} - m_1^n r_1^{n+1}}\bigg( \sum_{i=0}^{n-1}m_2^i r_2^{i+1}(r_1 m_1)^{n-i} p_{2i+1}\,q_{2n-2i} \\
						& \hspace{2cm} + \sum_{i=1}^n (r_2m_2)^i m_1^{n-i} r_1^{n-i+1} p_{2i}\, p_{2n-2i+1}\bigg)	\\
						& = \frac{1}{1-m_2^n r_2^{n+1} - m_1^n r_1^{n+1}}\bigg( \sum_{i=1}^n m_2^{n-i} r_2^{n+1-i}(r_1 m_1)^{i} p_{2n+1-2i}\,q_{2i} \\
						& \hspace{2cm} + \sum_{i=0}^{n-1} (r_2m_2)^{n-i} m_1^{i} r_1^{i+1} p_{2n-2i}\, p_{2i+1}\bigg).
\end{align*}
Then, by the induction hypothesis and \eqref{rec3},
\begin{align*}
p_{2n+1}		& = \frac{1}{1-m_2^n r_2^{n+1} - m_1^n r_1^{n+1}}\bigg( \sum_{i=1}^n m_2^{n-i} r_2^{n+1-i}(r_1 m_1)^{i} q_{2n+1-2i}\,q_{2i} \\
						& \hspace{2cm} + \sum_{i=0}^{n-1} (r_2m_2)^{n-i} m_1^{i} r_1^{i+1} p_{2n-2i}\, q_{2i+1}\bigg)\\
						& = q_{2n+1}.
\end{align*}
\end{proof}

With the above theorem we can reformulate Theorem \ref{pythallg} to get a property of the Wronskian of $f_{N,m}$ and $f_{D,m}$.

\begin{cor}
	Let $\m$ be as above, let $\l_m$ be the $m$th eigenvalue, let $f_{N,m}= \c_{\l,\m}(\sqrt{\l_m}, \cdot )$ and $f_{D,m} = \s_{\l,\m}(\sqrt{\l_m},\cdot)$ be the corresponding Neumann and Dirichlet eigenfunctions constructed in Section \ref{NandDProb}. Then, for all $x\in [0,1]$,
	\[f_{N,m}(x)\,f_{D,m}'(x) - f_{D,m}(x)\,f_{N,m}'(x) = \sqrt{\l_m}.\]
\end{cor}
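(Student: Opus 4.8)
The plan is to derive the identity directly from the generalized Pythagorean identity of Theorem~\ref{pythallg} together with the differentiation rules of Lemma~\ref{diffrules}; the special form of $\m$ enters only through Theorem~\ref{th:DirandNeusame}, which is what makes $\l_m$ meaningful simultaneously as a Neumann and a Dirichlet eigenvalue, so that $f_{N,m}=\c_{\l,\m}(\sqrt{\l_m},\cdot)$ and $f_{D,m}=\s_{\l,\m}(\sqrt{\l_m},\cdot)$ are eigenfunctions attached to the \emph{same} parameter $z:=\sqrt{\l_m}$.

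First I would record the ordinary ($\l$-)derivatives. Since $\c_{\l,\m}(z,\cdot),\s_{\l,\m}(z,\cdot)\in H^2(\l,\m)$, the derivatives $f_{N,m}'$ and $f_{D,m}'$ exist, and Lemma~\ref{diffrules} (where the dash refers to the second argument) gives
\[ f_{N,m}'=\c'_{\l,\m}(z,\cdot)=-z\,\s_{\m,\l}(z,\cdot)\qquad\text{and}\qquad f_{D,m}'=\s'_{\l,\m}(z,\cdot)=z\,\c_{\m,\l}(z,\cdot).\]

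Substituting these into the Wronskian and factoring out $z$, I obtain for every $x\in[0,1]$
\[ f_{N,m}(x)\,f_{D,m}'(x)-f_{D,m}(x)\,f_{N,m}'(x)=z\Bigl(\c_{\m,\l}(z,x)\,\c_{\l,\m}(z,x)+\s_{\l,\m}(z,x)\,\s_{\m,\l}(z,x)\Bigr).\]
By Theorem~\ref{pythallg} the parenthesis equals $1$ for all $x$, hence the Wronskian equals $z=\sqrt{\l_m}$, which is the claim.

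There is essentially no obstacle here: this is a one-line consequence of Lemma~\ref{diffrules} and Theorem~\ref{pythallg}, and in fact the displayed identity holds for an arbitrary $z\in\R$ with $f_{N,m},f_{D,m}$ replaced by $\c_{\l,\m}(z,\cdot)$ and $\s_{\l,\m}(z,\cdot)$. The only point worth a moment's care is that differentiating produces the \emph{mixed} functions $\c_{\m,\l}$ and $\s_{\m,\l}$ rather than $\c_{\l,\m}$ and $\s_{\l,\m}$, and it is precisely this crossing that makes Theorem~\ref{pythallg}, rather than a naive $\sin^2+\cos^2=1$, the correct identity to invoke; one may also sanity-check the answer by observing that the Wronskian comes out constant, as it must for two solutions of $\frac{d}{d\m}f'=-\l_m f$.
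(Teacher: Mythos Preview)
Your proof is correct and follows exactly the paper's own argument: compute $f_{N,m}'=-z\,\s_{\m,\l}(z,\cdot)$ and $f_{D,m}'=z\,\c_{\m,\l}(z,\cdot)$ via the differentiation rules, substitute into the Wronskian, and apply Theorem~\ref{pythallg}. Your additional remarks about the role of Theorem~\ref{th:DirandNeusame} and the fact that the identity holds for arbitrary $z$ are accurate and add helpful context.
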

\begin{proof}
	We put  $z=\sqrt{\l_m}$ in Theorem \ref{pythallg} and observe that
	\[ f_{N,m}' = \c'_{\l,\m}(\sqrt{\l_m}, \cdot ) = - \sqrt{\l_m} \s_{\m,\l}(\sqrt{\l_m}, \cdot)\]
	and
	\[ f_{D,m}' = \s_{\l,\m}'(\sqrt{\l_m},\cdot)= \sqrt{\l_m} \c_{\m,\l}(\sqrt{\l_m}, \cdot).\]
\end{proof}

Since eigenfunctions can be multiplied with any non-zero number, the above equation states basically that the Wronskian is constant. A similar property of a different Wronskian has been established in Freiberg \cite{freiberg:analytical}*{p. 41}.

\section{Figures and numbers}
\label{numerical}

In this section we give some explicit results and figures calculated by using formulas we developed in the preceding sections for several examples of self-similar measures. For the calculations we used Sagemath cloud \cite{sage}. The program code that we used can be found in the appendix.

\begin{exa}\label{exa1}
Tables \ref{tab:1312p2n+1}, \ref{tab:1312q2n+1} and \ref{tab:1312p2nq2n} collect the first few values of the sequences $(p_n)_n$ and $(q_n)_n$ for the classical Cantor set with evenly distributed measure, that is, for $r_1=r_2=\frac{1}{3}$ and $m_1=m_2=\frac{1}{2}$. We computed these values with the recursion formulas in Corollary \ref{rec} that we implemented for that purpose in Sagemath.

Figures \ref{fig:1312sinpsinqbis49} and \ref{fig:1312sinpsinqbis120} show plots of the functions $\sin_{\m}^N$ and $\sin_{\m}^D$ for $x\in(0,50)$ and for $x\in(0,120)$, respectively, where the first $100$ terms of the series are taken into account.  In the figures, $\sin_{\m}^D$ is drawn in a dash-dot line and $\sin_{\m}^N$ in a solid line. The zero points of these functions squared give the Dirichlet and Neumann eigenvalues, respectively. Observe that the pictures suggest that the eigenvalues are in the order
\[ \l_{N,0} < \l_{N,1} < \l_{D,1} < \l_{D,2} < \l_{N,2} < \l_{N,3} < \l_{D,3} < \l_{D,4} < \dots.\]

Table \ref{tab:nev1312} contains the first $32$ positive Neumann eigenvalues correct to $15$ decimal places (rounded down). These values have been calculated as zero points of the polynomial 
\[  \sum_{n=0}^a (-1)^n \, p_{2n+1}\,z^n \quad \biggl(\approx \frac{\sin_{\m}^N(\sqrt{z})}{\sqrt{z}} \biggr).\]
For that we used the command {\ttfamily findroot} from the {\ttfamily mpmath} library in Sagemath cloud (\cite{sage}) with a starting value that we took from a plot in each case. This way, we computed each zero point of the above polynomial with an accuracy of $100$ digits, where we chose $a$ in each case such that the first $15$ decimals of the zero point remain fixed against any further increase of $a$. Note that by Lemma \ref{convabsch} we have
\[ p_{2n+1} \leq \frac{1}{n!}q_2(1)^n = \frac{1}{n!\cdot 2^n},\] 
from which a more detailed error estimate can be obtained.

Observe that, as stated in Theorem \ref{th:ssnev}, we have that $\l_{N,2m} = 6 \cdot \l_{N,m}$ for all $m$. The distances between eigenvalues differ very much, there are several groups that lie very close together while there are big gaps as well.

In Table \ref{tab:nef1312} we give approximate values of the $L_2(\m)$ norms and the $\sup$ norms of the eigenfunctions $f_{N,m}=\c_{\l,\m}(\sqrt{\l_m}, \cdot )$.

The $L_2$ norms have been calculated with the formula in Corollary \ref{cor:l2norm} where we put in the values for $\l$ from Table \ref{tab:nev1312}. The number of summands had to be chosen higher with bigger eigenvalues, so that the limit value could be approximated with sufficient accuracy.

For the supremum norms we calculated $f_{N,m}\bigl(S_w(0)\bigr)$ and $f_{N,m}\bigl(S_w(1)\bigr)$ for all words $w\in \{1,2\}^n$ for a certain iteration level $n$ and
determined the biggest of these values. We varied $n$ between $5$ and $8$ to get the values. These calculations were made with the formulas in 
Proposition \ref{cpzsqzS2}. For that, the eigenvalue $\l_m$ and values of the functions $\sin_{\m}^N$, $\sin_{\m}^D$ and $\cos_{\m}^N$ were needed. Note that the $\sup$ norm values are rather rough approximations.

Then we determined the $\sup$ norm of the normalized eigenfunctions 
\[\norm{\tilde{f}_{N,m}}{\infty} = \frac{\norm{f_{N,m}}{\infty}}{\norm{f_{N,m}}{L_2(\m)}}.\]
Observe that, as stated in Equation \eqref{normsupnormneumann}, the values for even $m$ are the same as for $\frac{m}{2}$, respectively. 

Figure \ref{fig:eigenfunctions} shows plots of $f_{1,N}$ to $f_{4,N}$ and $f_{1,D}$ to $f_{4,D}$. These were done by iterative use of the formulas in Propositions
\ref{cpzsqzS1} and \ref{cpzsqzS2} as for the calculation of the sup-norms.

In Table \ref{tab:dev1312} we state the first $32$ eigenvalues with Dirichlet boundary conditions exact to 15 decimals. The procedure for the calculations is the same as with the Neumann eigenvalues explained above. Note that two values at a time lie close together, namely  $\l_{D, 2m-1}$ and $\l_{D, 2m}$. Especially close together are pairs of the form $\l_{D, 2^n-1}$ and $\l_{D,2^n}$. Therefore we had to increase the accuracy of $\l_{D,31}$ and $\l_{D,32}$ to $25$ digits to make the difference visible.

Estimates of the Dirichlet eigenvalues have also been obtained by Vladimirov and Sheipak in \cite{vladimirov:singular} and by Etienne \cite{etienne:chains} with completely different methods. 

As in the Neumann case, we calculated norms of Dirichlet eigenfunctions, see Table \ref{tab:def1312}.

\begin{table}[hb]
\renewcommand{\arraystretch}{1.3}
\centering
\begin{tabular}{>{$}l<{$} | >{$}l<{$}}
n		& p_{2n+1}\\
\hline
0		& 1 \\
1		& \n{1}{5}\\
2		& \n{27}{2800}\\
3		& \n{6383}{31906000}\\
4		&	\frac{928\,046\,087}{427\,065\,638\,720\,000}\\
5		&	\frac{18\,312\,146\,532\,699}{1\,290\,321\,173\,531\,252\,800\,000}\\
6		&	\frac{36\,205\,626\,974\,761\,334\,065\,053}{595\,390\,835\,517\,679\,574\,442\,022\,016\,000\,000}\\
7		&	\frac{4\,976\,934\,962\,986\,304\,441\,117\,658\,183}{27\,444\,983\,400\,881\,701\,904\,144\,720\,110\,742\,041\,600\,000}\\
8		& \frac{9\,554\,109\,968\,352\,546\,557\,662\,907\,330\,504\,773\,561\,465\,623}{24\,293\,779\,244\,421\,488\,801\,231\,482\,393\,897\,413\,175\,652\,507\,508\,121\,600\,000\,000}\\
9		& \frac{146\,991\,787\,616\,583\,137\,720\,984\,325\,054\,111\,289\,057\,094\,244\,281\,881\,523\,497}{228\,839\,658\,236\,344\,563\,453\,452\,927\,437\,095\,017\,291\,959\,177\,590\,164\,358\,527\,465\,655\,296\,000\,000\,000}\\
\end{tabular}
\caption{The first ten odd members of $(p_n)$ for $r_1=r_2=\frac{1}{3}$ and $m_1=m_2=\frac{1}{2}$.}
\label{tab:1312p2n+1}
\end{table}

\begin{table}
\renewcommand{\arraystretch}{1.3}
	\centering
	\begin{tabular}{>{$}l<{$} | >{$}l<{$}}
	n		& q_{2n+1}\\
	\hline
	0		&	1\\
	1		&	\n{1}{8}\\
	2		& \n{21}{4240}\\
	3		&	\n{33253}{383465600}\\
	4		&	\n{76118969}{91537621184000}\\
	5		&	\n{20165083798890939}{4103397246999022891520000}\\
	6		&	\n{129726498389261896497}{6714982210971717632658867200000}\\
	7		& \n{2413673468793966201825434809368471}{45210174990342427454327995801851920608256000000}\\
	8		&	\n{1194381655935980000421990244022269580561517}{11036319046998816108771342849627021590229476137440051200000}\\
	9		&	\n{126866175828333349955887526100988154691317901447037378112773}{762232235417372510271600164875680211782266161937386279477493896522956800000000}
	\end{tabular}
	\caption{The first ten odd members of $(q_n)$ for $r_1=r_2=\frac{1}{3}$ and $m_1=m_2=\frac{1}{2}$.}
	\label{tab:1312q2n+1}
\end{table}

\begin{table}
\renewcommand{\arraystretch}{1.3}
	\centering
		\begin{tabular}{>{$}l<{$} | >{$}l<{$}}
n		& p_{2n},q_{2n}\\
\hline
0		&	1\\
1		&	\n{1}{2}\\
2		& \n{3}{80}\\
3		&	\n{311}{296800}\\
4		&	\n{4716349}{329780416000}\\
5		&	\n{186511983201}{1659577072065920000}\\
6		& \n{7179455540679158013}{12761565438166961192627200000}\\
7		&	\n{159906376968352543502900259}{83334473684067539316352053491456000000}\\
8		&	\n{60996703846644308894938372985688873}{13022158544999621792336779426151940728460083200000}\\
9		& \n{55173436475334110717731416972957128310218371151677}{6487868455643720781486892657131701453895648546905230058700800000000}
\end{tabular}
	\caption{The first ten even members of $(p_n)$ and $(q_n)$ for $r_1=r_2=\frac{1}{3}$ and $m_1=m_2=\frac{1}{2}$.}
	\label{tab:1312p2nq2n}
\end{table}

\begin{figure}
	\centering
		\includegraphics[width=\textwidth]{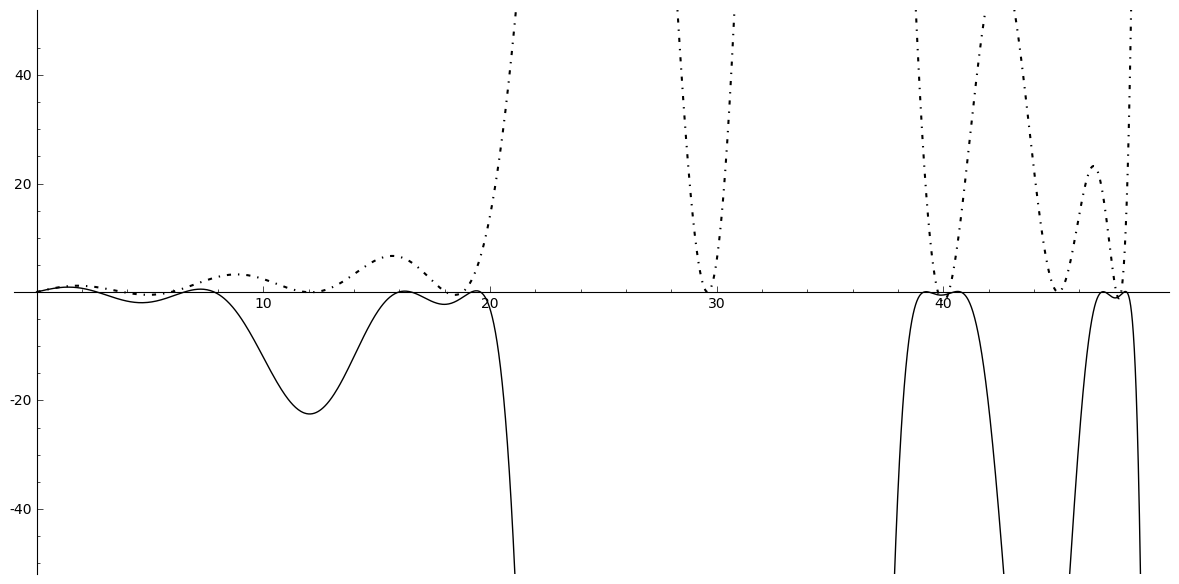}
	\caption{$\sin_{\m}^N$ (solid) and $\sin_{\m}^D$ (dash-dot) for $r_1=r_2=\frac{1}{3}$ and $m_1=m_2=\frac{1}{2}$.}
	\label{fig:1312sinpsinqbis49}
\end{figure}

\begin{figure}
	\centering
		\includegraphics[width=\textwidth]{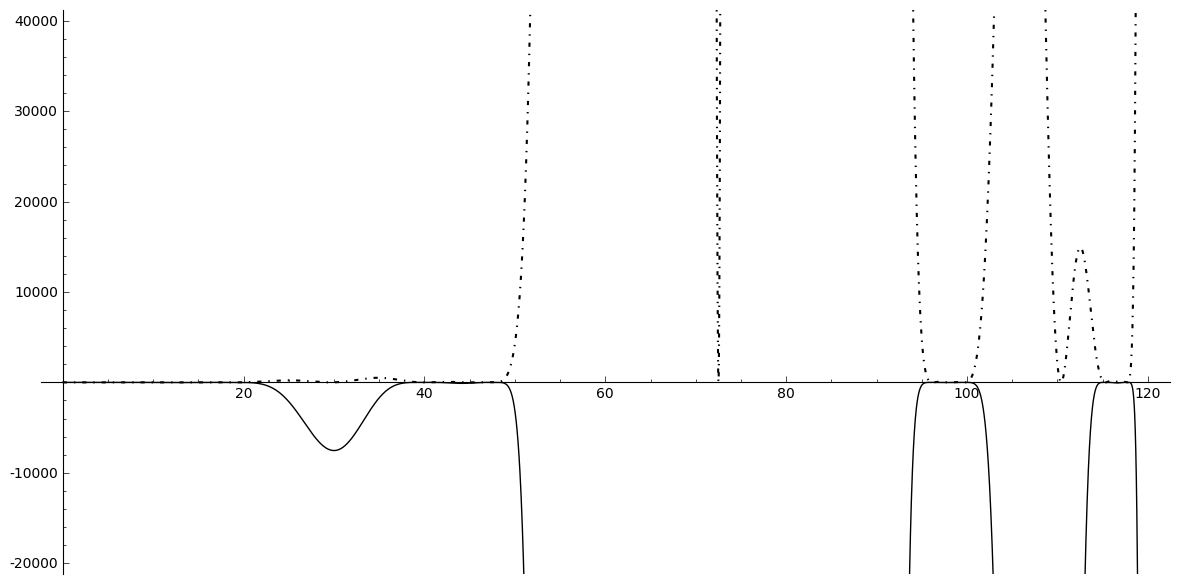}
	\caption{$\sin_{\m}^N$ (solid) and $\sin_{\m}^D$ (dash-dot) for $r_1=r_2=\frac{1}{3}$ and $m_1=m_2=\frac{1}{2}$.}
	\label{fig:1312sinpsinqbis120}
\end{figure}

\noindent
\begin{minipage}{.5\textwidth}
\centering
\includegraphics[width=\textwidth]{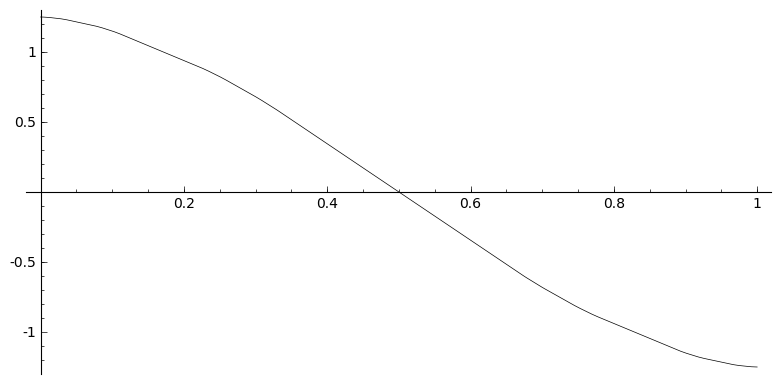}
\end{minipage}
\begin{minipage}{.5\textwidth}
\centering
\includegraphics[width=\textwidth]{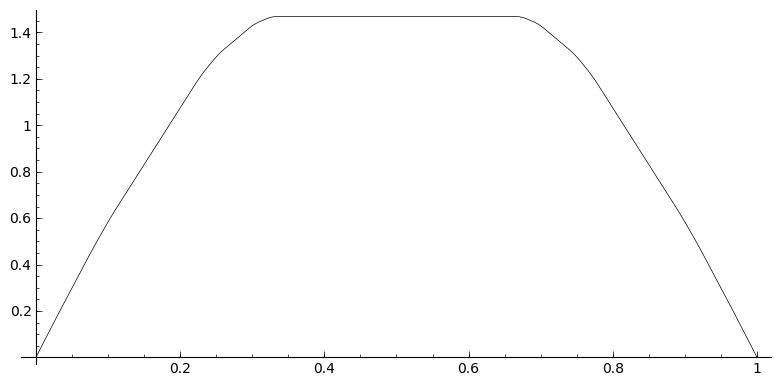}
\end{minipage}
\begin{minipage}{.5\textwidth}
\centering
\includegraphics[width=\textwidth]{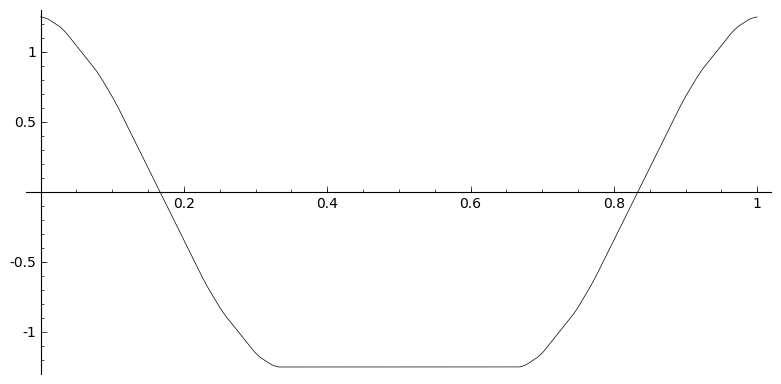}
\end{minipage}
\begin{minipage}{.5\textwidth}
\centering
\includegraphics[width=\textwidth]{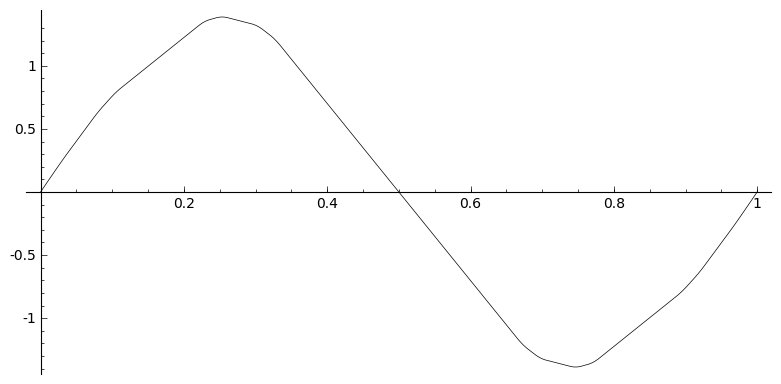}
\end{minipage}
\begin{minipage}{.5\textwidth}
\centering
\includegraphics[width=\textwidth]{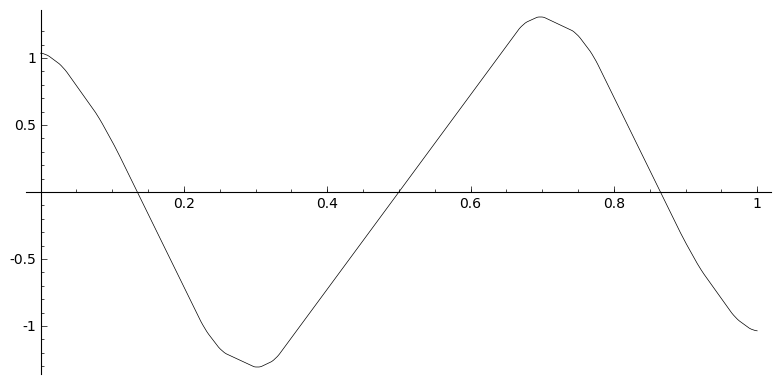}
\end{minipage}
\begin{minipage}{.5\textwidth}
\centering
\includegraphics[width=\textwidth]{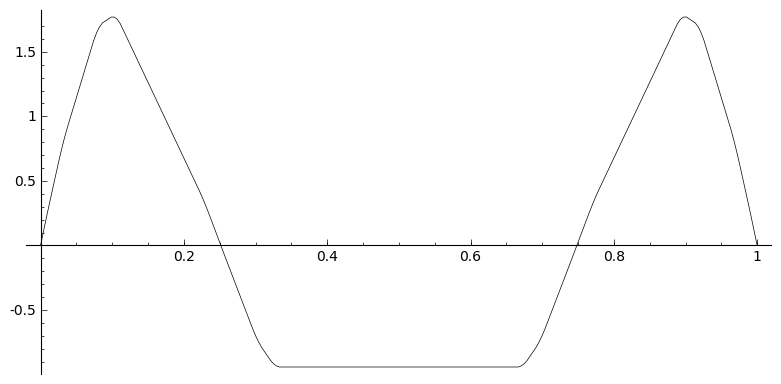}
\end{minipage}
\begin{minipage}{.5\textwidth}
\centering
\includegraphics[width=\textwidth]{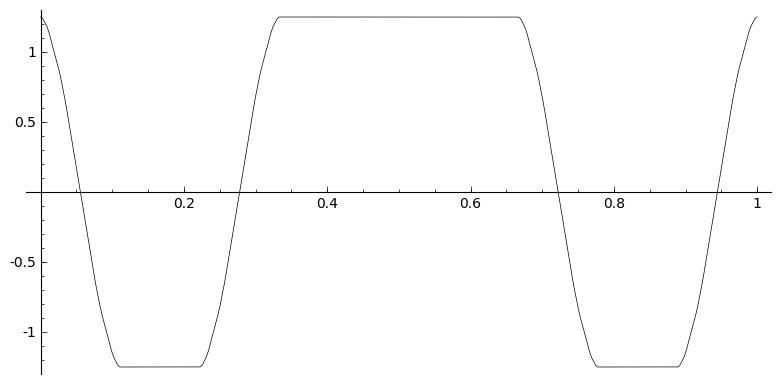}
\end{minipage}
\begin{minipage}{.5\textwidth}
\centering
\includegraphics[width=\textwidth]{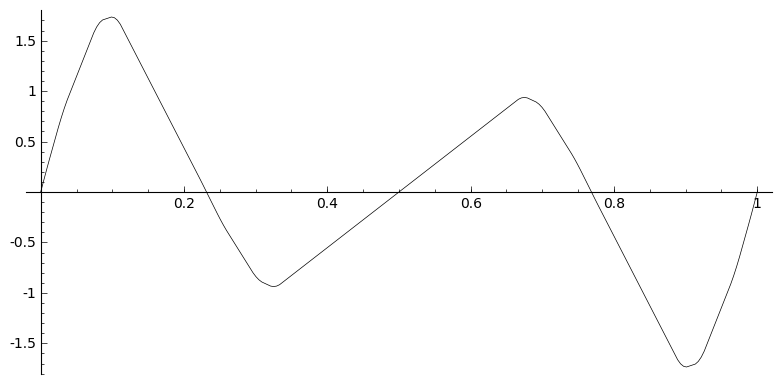}
\end{minipage}
\captionof{figure}{\label{fig:eigenfunctions}The first four Neumann (left) and Dirichlet (right) eigenfunctions\\ for $r_1=r_2=\frac{1}{3}$ and $m_1=m_2=\frac{1}{2}$.}

\begin{table}
\centering
\begin{tabular}{>{$}r<{$} | >{$}r<{$} |>{$}r<{$} p{1cm} >{$}r<{$} | >{$}r<{$}| >{$}r<{$}}
\multicolumn{1}{c|}{$m$}&\multicolumn{1}{c|}{$\l_{N,m}$}&\multicolumn{1}{c}{$a$}&\multicolumn{1}{p{1cm}}{}&\multicolumn{1}{c|}{$m$}&\multicolumn{1}{c|}{$\l_{N,m}$}&\multicolumn{1}{c}{$a$}\\
\cline{1-3} \cline{5-7}
1		&       7.09743\,10981\,41122	&	  12	&&	17	&   9211.73939\,77562\,51229	&	86\\
2		&	     42.58458\,65888\,46733	&   19	&&	18	&   9288.33494\,53277\,71442	& 85\\
3		&	     61.34420\,39227\,01662	&   19	&&	19	&   9316.34702\,24100\,75024	& 85\\
4		&     255.50751\,95330\,80403	&	  28	&&	20	&   9827.40854\,94892\,99413	& 87\\
5		&	    272.98357\,08191\,47205	&   28	&&	21	&   9847.99008\,31996\,68501	& 87\\
6		&     368.06522\,35362\,09975	&	  30	&&	22	&   9975.76460\,05394\,58261	&	87\\
7		&     383.55288\,31276\,93176	&	  31	&&	23	&   9994.03735\,25970\,68208	& 87\\
8		&    1533.04511\,71984\,82423	&		47	&&	24	&  13250.34804\,73035\,59112	& 97\\
9		&    1548.05582\,42212\,95240	&		47	&&	25	&  13260.71659\,87844\,44965	& 96\\
10	&    1637.90142\,49148\,83235	&		48	&&	26	&  13324.61669\,98067\,78407	& 97\\
11	&	   1662.62743\,34232\,43043	&		48	&&	27	&  13342.22766\,88915\,03102	& 97\\
12	&    2208.39134\,12172\,59852	&		53	&&	28	&  13807.90379\,25969\,54355	& 97\\
13	&    2220.76944\,99677\,96401	&		53	&&	29	&  13816.72725\,09206\,34538	& 98\\
14	&    2301.31729\,87661\,59059	&		53	&&	30	&  13875.49272\,43655\,06427	& 98\\
15	&    2312.58212\,07275\,84404	&		53	&&	31	&  13883.67238\,03565\,18424	& 97\\
16	&    9198.27070\,31908\,94542	&		85	&&	32	&  55189.62421\,91453\,67256	& 160
\end{tabular}
\caption{Neumann eigenvalues of $-\frac{d}{d\m}\frac{d}{dx}$ for $r_1=r_2=\frac{1}{3}$ and $m_1=m_2=\frac{1}{2}$.}
\label{tab:nev1312}
\end{table}

\begin{table}
\centering
\begin{tabular}{>{$}r<{$} | >{$}r<{$} |>{$}r<{$}|>{$}r<{$} p{0.3cm} >{$}r<{$} | >{$}r<{$}| >{$}r<{$}|>{$}r<{$}}
\multicolumn{1}{c|}{$m$}&\multicolumn{1}{c|}{$\norm{f_{N,m}}{2}$}&\multicolumn{1}{c|}{$\norm{f_{N,m}}{\infty}$}&\multicolumn{1}{c}{$\norm{\tilde{f}_{N,m}}{\infty}$}&\multicolumn{1}{p{0.3cm}}{}&\multicolumn{1}{c|}{$m$}&\multicolumn{1}{c|}{$\norm{f_{N,m}}{2}$}&\multicolumn{1}{c|}{$\norm{f_{N,m}}{\infty}$}&\multicolumn{1}{c}{$\norm{\tilde{f}_{N,m}}{\infty}$}\\
\cline{1-4} \cline{6-9}
1		&	0.801		&	1.000			&	1.248		&& 17	& 0.666		&	1.001 & 1.503	\\
2		&	0.801		&	1.000			&	1.248		&& 18	& 0.687		& 1.007	& 1.467	\\
3		& 0.966		& 1.261			& 1.306		&& 19	& 0.829		& 1.307 & 1.577	\\
4		&	0.801		&	1.000			&	1.248		&& 20	& 0.746		& 1.049	&	1.405	\\
5		& 0.746		& 1.049			&	1.405		&& 21	& 0.688		& 1.093 & 1.588	\\
6		& 0.966		& 1.261			& 1.306		&& 22	& 0.897		& 1.356	& 1.512	\\
7		& 1.145		& 1.604			& 1.401 	&& 23	& 1.057		& 1.703	& 1.612	\\
8		&	0.801		&	1.000			&	1.248		&& 24	& 0.966		& 1.261	& 1.306	\\
9		& 0.687		& 1.007			& 1.467		&& 25	& 0.826		&	1.262	&	1.529	\\
10	& 0.746		& 1.049			&	1.405		&& 26	& 0.886		& 1.306	& 1.474	\\
11	& 0.897		& 1.356			& 1.512		&& 27	& 1.063		& 1.694	& 1.594	\\
12	& 0.966		& 1.261			& 1.306		&& 28	& 1.145		& 1.604	& 1.401	\\
13	& 0.886		& 1.306			& 1.474		&& 29	& 1.049		& 1.656	&	1.579	\\
14	& 1.145		& 1.604			& 1.401		&& 30	& 1.346		& 2.029	& 1.508	\\
15	& 1.346		& 2.029			& 1.508		&& 31	& 1.579		& 2.563	& 1.625	\\
16	&	0.801		&	1.000			&	1.248		&& 32	& 0.801		&	1.000	&	1.248	
\end{tabular}
\caption{Norms of Neumann eigenfunctions for $r_1=r_2=\frac{1}{3}$ and $m_1=m_2=\frac{1}{2}$.}
\label{tab:nef1312}
\end{table}

\begin{table}
\centering
\begin{tabular}{>{$}r<{$} | >{$}r<{$} |>{$}r<{$} p{0.5cm} >{$}r<{$} | >{$}l<{$}| >{$}r<{$}}
\multicolumn{1}{c|}{$m$}&\multicolumn{1}{c|}{$\l_{D,m}$}&\multicolumn{1}{c}{$a$}&\multicolumn{1}{p{0.5cm}}{}&\multicolumn{1}{c|}{$m$}&\multicolumn{1}{c|}{$\l_{D,m}$}&\multicolumn{1}{c}{$a$}\\
\cline{1-3} \cline{5-7}
1		&      14.43524\,05120\,53874	&	  13	&&	17	&  \phantom{0} 9233.86793\,80086\,63779	&	84\\
2		&	     35.26023\,80242\,77225	&   16	&&	18	&  \phantom{0} 9271.62879\,27212\,74161	& 83\\
3		&	    140.78105\,33845\,56059	&   24	&&	19	&  \phantom{0} 9589.26839\,61415\,98781	& 85\\
4		&     151.29061\,60550\,19631	&	  23	&&	20	&  \phantom{0} 9598.24041\,25849\,12727	& 85\\
5		&	    326.05732\,83577\,53770	&   29	&&	21	&  \phantom{0} 9923.46445\,25858\,18608	& 85\\
6		&     353.41692\,07675\,57756	&	  29	&&	22	&  \phantom{0} 9957.06520\,21538\,29857	&	87\\
7		&     876.27445\,96020\,73755	&	  39	&&	23	&  12190.28558\,35702\,41470	& 93\\
8		&     876.50531\,85096\,60313	&		39	&&	24	&  12190.29241\,90995\,34112	& 94\\
9		&    1581.17702\,42871\,45662	&		46	&&	25	&  13284.12682\,48731\,70732	& 94\\
10	&    1619.40072\,91584\,24238	&		46	&&	26	&  13311.27448\,40460\,62950	& 95\\
11	&	   2029.61356\,34510\,19039	&		51	&&	27	&  13668.53690\,39463\,19748	& 96\\
12	&    2033.85281\,30577\,61437	&		51	&&	28	&  13671.26816\,68726\,11762	& 96\\
13	&    2268.79163\,36445\,60767	&		53	&&	29	&  13851.83951\,26643\,76419	& 96\\
14	&    2289.60406\,94424\,69130	&		52	&&	30	&  13866.93782\,41331\,73771	& 96\\
15	&    5258.33939\,69212\,17309	&		71	&&	31	&  31550.03640\,02815\,21874\,64223\,25788	& 139\\
16	&    5258.33940\,31726\,23308	&		71	&&	32	&  31550.03640\,02815\,21874\,89689\,65410	& 139
\end{tabular}
\caption{Dirichlet eigenvalues of $-\frac{d}{d\m}\frac{d}{dx}$ for $r_1=r_2=\frac{1}{3}$ and $m_1=m_2=\frac{1}{2}$.}
\label{tab:dev1312} 
\end{table}

\begin{table}
\centering
\begin{tabular}{>{$}r<{$} | >{$}r<{$} |>{$}r<{$}|>{$}r<{$} p{0.3cm} >{$}r<{$} | >{$}r<{$}| >{$}r<{$}|>{$}r<{$}}
\multicolumn{1}{c|}{$m$}&\multicolumn{1}{c|}{$\norm{f_{D,m}}{2}$}&\multicolumn{1}{c|}{$\norm{f_{D,m}}{\infty}$}&\multicolumn{1}{c}{$\norm{\tilde{f}_{D,m}}{\infty}$}&\multicolumn{1}{p{0.3cm}}{}&\multicolumn{1}{c|}{$m$}&\multicolumn{1}{c|}{$\norm{f_{D,m}}{2}$}&\multicolumn{1}{c|}{$\norm{f_{D,m}}{\infty}$}&\multicolumn{1}{c}{$\norm{\tilde{f}_{D,m}}{\infty}$}\\
\cline{1-4} \cline{6-9}
1		&	0.627		&	0.920		&	1.469		&&	17	& 8.492		& 15.635		& 1.841	\\
2		&	0.711		&	0.985		&	1.387		&&	18	& 7.115		& 12.394		& 1.742	\\
3		& 0.446		& 0.790		& 1.770		&&	19	& 1.685		&  3.996		& 2.372	\\
4		&	0.457		&	0.793		&	1.734		&& 	20	& 1.679		&  3.985		& 2.374	\\
5		& 1.115		& 1.628		&	1.461		&& 	21	& 5.110		& 10.266		& 2.009	\\
6		& 1.273		& 2.105		& 1.654		&& 	22	& 5.787		& 11.252		& 1.944	\\
7		& 0.262		& 0.646		& 2.469		&& 	23	& 0.415		&  1.195		& 2.883	\\
8		&	0.262		&	0.646		&	2.468		&& 	24	& 0.415		&  1.306		& 3.151	\\
9		& 2.798		& 5.034		& 1.799		&& 	25	& 9.565		& 18.428		& 1.927	\\
10	& 2.656		& 4.694		&	1.767		&& 	26	& 8.944		& 16.597		& 1.856	\\
11	& 0.719		& 1.460		& 2.032		&& 	27	& 1.950		&  4.852		& 2.489	\\
12	& 0.717		& 1.602		& 2.233		&& 	28	& 1.945		&  4.863		& 2.501	\\
13	& 3.048		& 5.661		& 1.857		&& 	29	& 8.777		& 15.403		& 1.755	\\
14	& 3.481		& 6.296		& 1.809		&& 	30	& 9.995		& 19.451		& 1.946	\\
15	& 0.151		& 0.509		& 3.369		&& 	31	& 0.087		&  0.416		& 4.765	\\
16	&	0.151		&	0.528		&	3.491		&& 	32	& 0.087		&  0.416		& 4.765
\end{tabular}
\caption{Norms of Dirichlet eigenfunctions for $r_1=r_2=\frac{1}{3}$ and $m_1=m_2=\frac{1}{2}$.}
\label{tab:def1312} 
\end{table}
\end{exa}

\pagebreak

\begin{exa}\label{exa2}
For the next example, we take the asymmetric self-similar measure with $r_1=1/3$, $r_2=1/4$, $m_1=\dfrac{1}{3^{d_H}}$ and $m_2=\dfrac{1}{4^{d_H}}$ where 
$d_H$ is the Hausdorff dimension of the invariant set. That is, $d_H$ is the solution of the equation
\[\frac{1}{3^{d_H}}+\frac{1}{4^{d_H}}=1.\]
For the calculations we used $0.56049886522386387883902233$ for $d_H$. Variation of this value led to no change in the first 15 digits of the eigenvalues.
Plots of $\sin_{\m}^N$ and $\sin_{\m}^D$ are shown in Figure \ref{fig:1314hd_sinp_sinq_bis_48} and the first eigenvalues exact to 15 decimal places are displayed
in Table \ref{tab:nev1314H}. Note that here $m_1r_1 \neq m_2r_2$. There seem to be no fixed order of Neumann and Dirichlet eigenvalues as in Example \ref{exa1} and there are no clear pairings of the values.

\begin{figure}
	\centering
		\includegraphics[width=\textwidth]{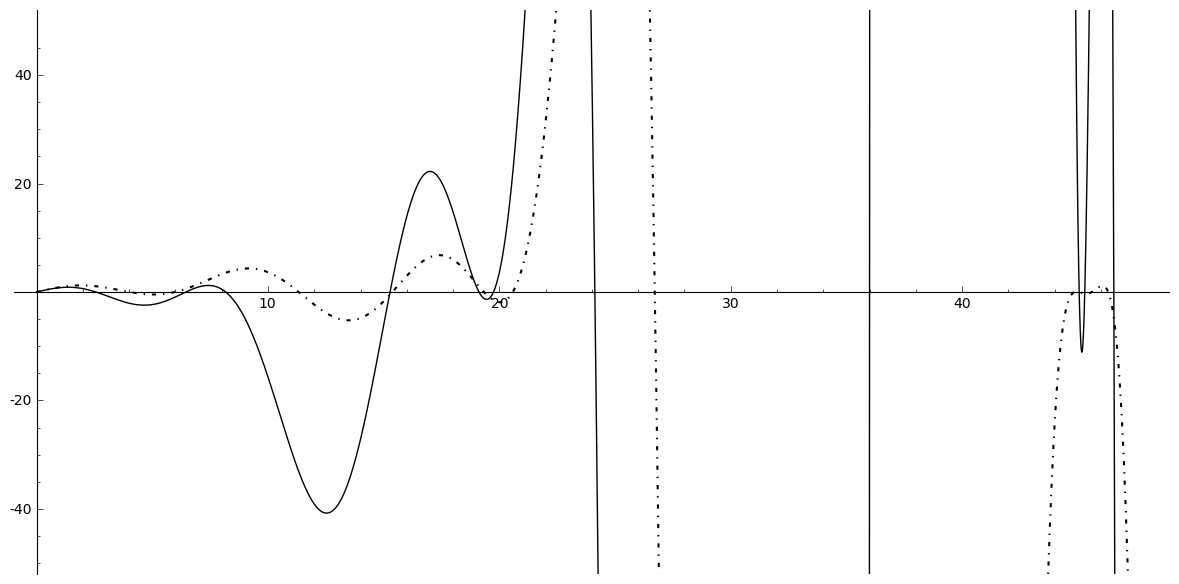}
	\caption{$\sin_{\m}^N$ (solid) and $\sin_{\m}^D$ (dash-dot) for $r_1=1/3$, $r_2=1/4$, $m_1=\dfrac{1}{3^{d_H}}$ and $m_2=\dfrac{1}{4^{d_H}}$}
	\label{fig:1314hd_sinp_sinq_bis_48}
\end{figure}

\begin{table}
\centering
\begin{tabular}{>{$}r<{$} | >{$}r<{$} |>{$}r<{$} p{0.5cm} >{$}r<{$} | >{$}r<{$}| >{$}r<{$}}
\multicolumn{1}{c|}{$m$}&\multicolumn{1}{c|}{$\l_{N,m}$}&\multicolumn{1}{c}{$a$}&\multicolumn{1}{p{0.5cm}}{}&\multicolumn{1}{c|}{$m$}&\multicolumn{1}{c|}{$\l_{D,m}$}&\multicolumn{1}{c}{$a$}\\
\cline{1-3} \cline{5-7}
1		&    6.567037965687942	&	 11		&&	1		&   16.107849410419070	& 12	\\
2		&   41.632795946820830	&	 16		&&  2		&   35.907601066462638	& 15	\\
3		&   66.822767372091789	&  19		&&  3   &  128.330447556120622	& 21	\\
4		&  233.355013145153884	&	 24		&&  4		&  236.463676343561213	& 24	\\
5		&  365.584215801794021	&  27		&&  5		&  373.701929431216995	& 27	\\
6		&  389.945618826510339	&  28		&&  6		&  423.638157028808414	& 28	\\
7		&  582.138208794906725  &  30		&&  7		&  713.786986198043209	& 31	\\
8		& 1295.888937033626505  &  37		&&  8		& 2013.164883016581104	& 44
\end{tabular}
\caption{Neumann and Dirichlet eigenvalues for $r_1=1/3$, $r_2=1/4$, $m_1=\dfrac{1}{3^{d_H}}$ and $m_2=\dfrac{1}{4^{d_H}}$.}
\label{tab:nev1314H} 
\end{table}
\end{exa}

\begin{exa}\label{exa3}

Figure \ref{fig:1314sinp_sinq_bis_59} shows plots of $\sin_{\m}^N$ and $\sin_{\m}^D$ for %the measure $\m$ with 
$r_1=\frac{1}{3}$, $r_2=\frac{1}{4}$ and
 $m_1=\frac{3}{7}$, $m_2=\frac{4}{7}$. The invariant set is geometrically the same as in Example \ref{exa2}, but $m_1$ and $m_2$ are chosen 
such that $r_1m_1=r_2m_2=\frac{1}{7}$ and thus, $\l_{N,2m} = 7 \cdot \l_{N,m}$.% The calculations have been done as explained in Example \ref{exa1}.

Comparing with Example \ref{exa1}, we observe that the Neumann eigenvalues behave qualitatively similar, but the Dirichlet eigenvalues do not appear
 in such close pairs. However, it seems to hold again, that two Neumann and two Dirichlet eigenvalues appear in turns.

\begin{figure}
	\centering
		\includegraphics[width=\textwidth]{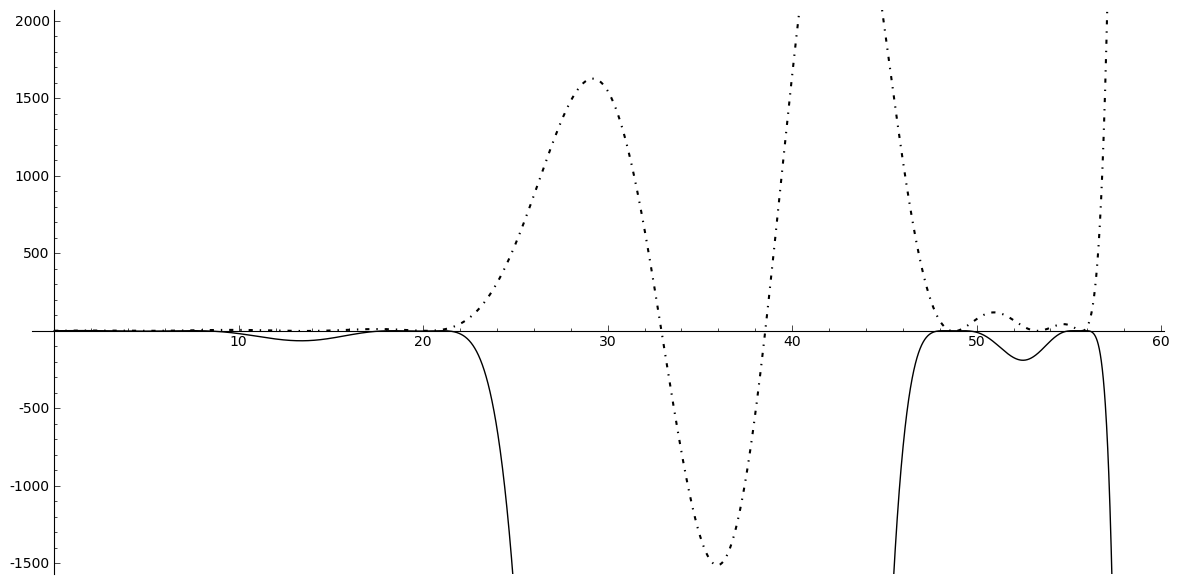}
	\caption{$\sin_{\m}^N$ (solid) and $\sin_{\m}^D$ (dash-dot) for$r_1=1/3$, $r_2=1/4$, $m_1=\frac{3}{7}$ and $m_2=\frac{4}{7}$.}
	\label{fig:1314sinp_sinq_bis_59}
\end{figure}

\begin{table}
\centering
\begin{tabular}{>{$}r<{$} | >{$}r<{$} |>{$}r<{$} p{0.5cm} >{$}r<{$} | >{$}r<{$}| >{$}r<{$}}
\multicolumn{1}{c|}{$m$}&\multicolumn{1}{c|}{$\l_{N,m}$}&\multicolumn{1}{c}{$a$}&\multicolumn{1}{p{0.5cm}}{}&\multicolumn{1}{c|}{$m$}&\multicolumn{1}{c|}{$\l_{D,m}$}&\multicolumn{1}{c}{$a$}\\
\cline{1-3} \cline{5-7}
1		&     6.752284245618646		&	 10		&&	1		&    16.452512161464721	& 12	\\
2		&    47.265989719330522		&  16		&&  2		&    36.904245287406090	& 15	\\
3		&    62.066872795561511		&  18		&&  3		&   154.577520453343494	& 21	\\
4		&   330.861928035313659		&  26		&&  4		&   212.376524344704458	& 23	\\
5		&   345.194670941772007		&  27		&&  5		&   395.526819249411977	& 27	\\
6		&   434.468109568930577		&  28		&&  6		&   417.532700806716224	& 27	\\
7		&   446.407999438501248		&  28		&&  7		&  1083.253271255975735	& 34	\\
8		&  2316.033496247195616		&  45		&&  8		&  1485.470110503836517	& 37	\\
9		&  2332.825185220436900		&  46		&&  9		&  2360.481274606702758	& 44	\\
10	&  2416.362696592404055		&  46		&& 10		&  2397.801276276128236	& 44	\\
11	&  2434.484694248270572		&  46		&& 11		&  2830.491432008378221	& 47	\\
12	&  3041.276766982514042		&  50		&& 12		&  2850.987710468049166	& 47	\\
13	&  3051.736543145083444		&  50		&& 13		&  3093.525406096403347	& 48	\\
14	&  3124.855996069508739		&  50		&& 14		&  3111.593713450879200	& 48	\\
15	&  3133.914016082441210		&  49		&& 15		&  7582.772906434721944	& 58	\\
16	& 16212.234473730369315		&  82		&& 16		& 10398.290767742394136	& 64
\end{tabular}
\caption{Neumann and Dirichlet eigenvalues for $r_1=1/3$, $r_2=1/4$, $m_1=\frac{3}{7}$ and $m_2=\frac{4}{7}$.}
\label{tab:ev1314} 
\end{table}
\end{exa}

\begin{exa}\label{exa4}

We choose the measure with $r_1=0.6$, $r_2=0.4$, $m_1=0.4$ and $m_2=0.6$. This measure is supported on the whole interval $[0,1]$, yet is singular to the Lebesgue measure. In Theorem \ref{th:DirandNeusame} we showed that $\sin_{\m}^N$ and $\sin_{\m}^D$ and thus the Dirichlet and Neumann eigenvalues coincide. In Figure \ref{fig:sinp0604} a plot of $\sin_{\m}^N$ is displayed. It is comparable to the sine function, which we would get for $r_1=r_2=m_1=m_2= 0.5$. Table \ref{tab:ev0604} contains the first 16 eigenvalues. 

\begin{figure}
	\centering
		\includegraphics[width=\textwidth]{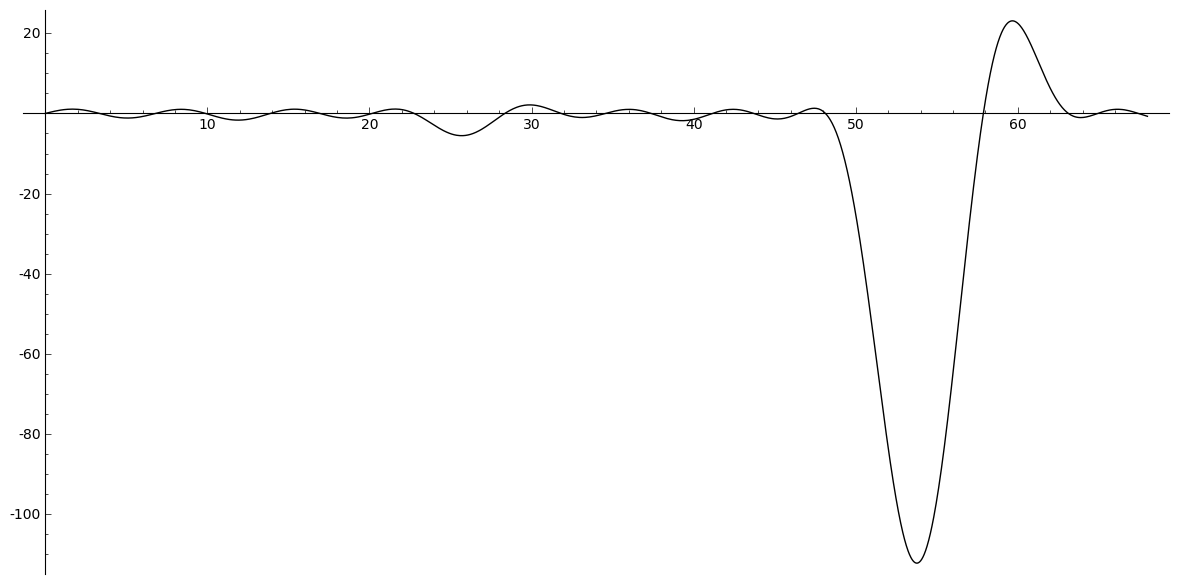}
	\caption{$\sin_{\m}^N$ (coincides with $\sin_{\m}^D$) for $r_1=0.6$, $r_2=0.4$, $m_1=0.4$ and $m_2=0.6$.}
	\label{fig:sinp0604}
\end{figure}

\begin{table}
\centering
\begin{tabular}{>{$}r<{$} | >{$}r<{$} |>{$}r<{$} p{0.5cm} >{$}r<{$} | >{$}l<{$}| >{$}r<{$}}
\multicolumn{1}{c|}{$m$}&\multicolumn{1}{c|}{$\l_{m}$}&\multicolumn{1}{c}{$a$}&\multicolumn{1}{p{0.5cm}}{}&\multicolumn{1}{c|}{$m$}&\multicolumn{1}{c|}{$\l_{m}$}&\multicolumn{1}{c}{$a$}\\
\cline{1-3} \cline{5-7}
1		&     11.113238313123921	&	  13	&&	9		&	  1012.173153820335730	&   54	\\
2		&	    46.305159638016340	&		19	&&	10	&   1194.689209582619744	&		57	\\
3		&     97.600761284513435	&		24	&&	11	&   1396.721102656337624	&		60	\\
4		&    192.938165158401419	&		29	&&	12	&		1694.457661189469363	&		65	\\
5		&    286.725410299828738	&		34	&&	13	&		1910.161155469398890	&		67	\\
6		&    406.669838685472647	&		38	&&	14	&		2157.157626864968439	&   70	\\
7		&		 517.717830447592425	&		41	&&	15	&   2316.668360848575284  &		73	\\
8		&    803.909021493339246	&		48	&&	16	&   3349.620922888913526	&   83
\end{tabular}
\caption{Neumann (and Dirichlet) eigenvalues for $r_1=0.6$, $r_2=0.4$, $m_1=0.4$ and $m_2=0.6$.}
\label{tab:ev0604} 
\end{table}
\end{exa}

\begin{exa}\label{exa5}

We take $r_1=0.9$, $r_2=0.1$, $m_1=0.1$ and $m_2=0.9$. The resulting measure is supported on $[0,1]$ as in Example \ref{exa4}, but in Figure \ref{fig:0901sinp} we see that $\sin_{\m}^N$ looks very different from the sine function. Table \ref{tab:ev0901} contains the first 16 eigenvalues up to 15 decimal places.

\begin{figure}
	\centering
		\includegraphics[width=\textwidth]{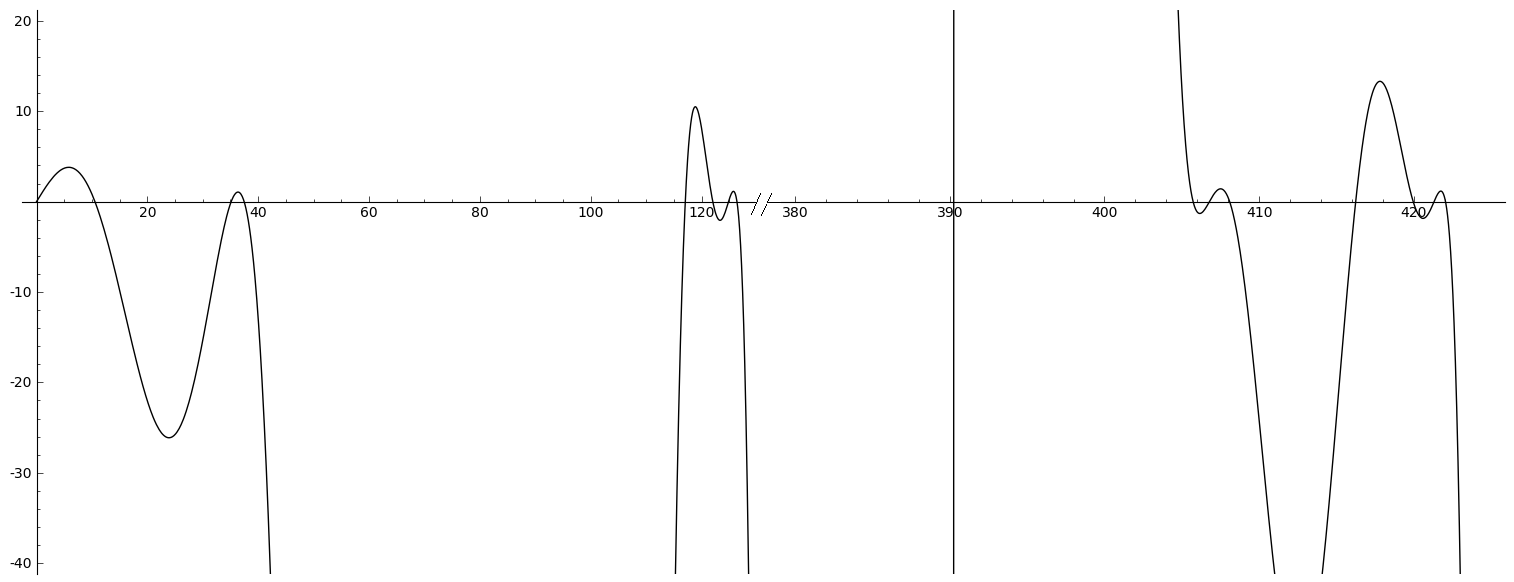}
	\caption{$\sin_{\m}^N$ (coincides with $\sin_{\m}^D$) for $r_1=0.9$, $r_2=0.1$, $m_1=0.1$ and $m_2=0.9$.}
	\label{fig:0901sinp}
\end{figure}

\begin{table}
\centering
\begin{tabular}{>{$}r<{$} | >{$}r<{$} |>{$}r<{$} p{0.5cm} >{$}r<{$} | >{$}r<{$}| >{$}r<{$}}
\multicolumn{1}{c|}{$m$}&\multicolumn{1}{c|}{$\l_{m}$}&\multicolumn{1}{c}{$a$}&\multicolumn{1}{p{0.5cm}}{}&\multicolumn{1}{c|}{$m$}&\multicolumn{1}{c|}{$\l_{m}$}&\multicolumn{1}{c}{$a$}\\
\cline{1-3} \cline{5-7}
1		&    111.021168159382246	&	 9	&&	9		&  164619.744662988877161	& 44	\\
2		&	  1233.568535104247184	&	15	&&	10	&  165477.638319057818349	& 43	\\
3		&   1403.454381590697316	& 15	&&	11	&  166543.871983977116823	& 43	\\
4		&  13706.317056713857601	& 24	&&	12	&  173265.973035888557594	& 43	\\
5		&  14892.987448715203651	& 24	&&	13	&  176376.608221577384951	&	43	\\
6		&  15593.937573229970183	& 25	&&	14	&  177512.483919664028463	&	44	\\
7		&  15976.123552769762561	& 25	&&	15	&  178137.700783469958606	&	44	\\
8		& 152292.411741265084466	& 40	&&	16	& 1692137.908236278716292	&	72
\end{tabular}
\caption{Neumann (and Dirichlet) eigenvalues for $r_1=0.9$, $r_2=0.1$, $m_1=0.1$ and $m_2=0.9$.}
\label{tab:ev0901} 
\end{table}
\end{exa}

\newpage

\section{Remarks and outlook}
\label{sec:rem}

In this section we state several remarks and thoughts that could be subject of future studies. 

\begin{conj}
Due to the examination of several examples (see e.g. Examples \ref{exa1}, \ref{exa3}, \ref{exa4} and \ref{exa5}) we conjecture that in case of a self-similar measure $\m$ with $r_1m_1=r_2m_2$ the Neumann and Dirichlet eigenvalues satisfy
\[ \l_{N,0} < \l_{N,1} < \l_{D,1} < \l_{D,2} < \l_{N,2} < \l_{N,3} < \l_{D,3} < \l_{D,4} < \dots.\]
\end{conj}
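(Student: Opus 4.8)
The conjecture as stated must be restricted to the case $r_1+r_2<1$: if $r_1+r_2=1$ then, within the standing hypothesis $r_1m_1=r_2m_2$, one is forced to have $r_1=m_2$ and $r_2=m_1$, so Theorem~\ref{th:DirandNeusame} gives $\l_{N,m}=\l_{D,m}$ for all $m\ge1$ and the strict chain fails (this case is already covered by Theorem~\ref{th:DirandNeusame}). So assume $r_1+r_2<1$. The plan is to compare the two spectra by determining, for each $m$, how many Dirichlet eigenvalues lie below $\l_{N,m}$, the decisive ingredient being the sign of $\sin_{\m}^D$ at $z_m:=\sqrt{\l_{N,m}}$.

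By the Lemma in Section~\ref{sec:r1m1gleich}, in particular \eqref{sinqvm}, one has $\sin_{\m}^D(z_m)=a_{v(m)}z_m$, where $v(m)$ is the multiplicity of the prime $2$ in $m$, $a_0=1-(r_1+r_2)$, and $a_k=1-(r_1+r_2)+a_{k-1}\bigl(r_1(-\tfrac{r_2}{r_1})^{2^{k-1}}+r_2(-\tfrac{r_1}{r_2})^{2^{k-1}}\bigr)$ for $k\ge1$. The first step is to prove $a_k>0$ for all $k\ge0$. This is a short induction: $a_0>0$; for $k=1$ the parenthesis equals $-(r_1+r_2)$, so $a_1=(1-(r_1+r_2))^2>0$; and for $k\ge2$ the exponent $2^{k-1}$ is even, so the parenthesis equals $r_1(\tfrac{r_2}{r_1})^{2^{k-1}}+r_2(\tfrac{r_1}{r_2})^{2^{k-1}}>0$, whence $a_k>0$ once $a_{k-1}>0$. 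Consequently $\sin_{\m}^D(z_m)>0$ for every $m\ge1$; in particular $\l_{N,m}$ is never a Dirichlet eigenvalue.

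Next I would invoke the one-dimensional oscillation theory for $\Delta_{\m}$ (as in Freiberg~\cite{freiberg:analytical}, Freiberg and L\"obus~\cite{freiberg:zeros}, Freiberg~\cite{freiberg:pruefer}): for $z^2$ not a Dirichlet eigenvalue the function $\s_{\l,\m}(z,\cdot)$ has exactly $\#\{k\ge1:\l_{D,k}<z^2\}$ zeros in $(0,1)$, while the $m$th Neumann eigenfunction $f_{N,m}=\c_{\l,\m}(z_m,\cdot)$ has exactly $m$ (simple, interior) zeros in $(0,1)$; moreover $\c_{\l,\m}(z_m,\cdot)$ and $\s_{\l,\m}(z_m,\cdot)$ are linearly independent solutions of $\Delta_{\m}u=-\l_{N,m}u$ with constant Wronskian $\s_{\l,\m}'(z_m,\cdot)\,\c_{\l,\m}(z_m,\cdot)-\s_{\l,\m}(z_m,\cdot)\,\c_{\l,\m}'(z_m,\cdot)\equiv z_m$ (the dash denoting the derivative in the second argument; this follows from Theorem~\ref{pythallg} and the differentiation rules of Lemma~\ref{diffrules}), so Sturm separation applies. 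Writing $0<x_1<\dots<x_m<1$ for the zeros of $\c_{\l,\m}(z_m,\cdot)$, the argument would run: (i) since the Wronskian relation gives $\s_{\l,\m}(z_m,x_i)\,\c_{\l,\m}'(z_m,x_i)=-z_m\neq0$, $\s_{\l,\m}(z_m,\cdot)$ vanishes at none of the $x_i$, has no zero in $(0,x_1)$, and exactly one in each $(x_i,x_{i+1})$, hence $m-1$ zeros in $(0,x_m)$; (ii) at most one further zero can lie in $(x_m,1)$; (iii) since $\c_{\l,\m}(z_m,\cdot)$ is positive near $0$ and changes sign at $x_1,\dots,x_m$ one has $\operatorname{sign}\c_{\l,\m}'(z_m,x_m)=(-1)^m$, so by (i) $\operatorname{sign}\s_{\l,\m}(z_m,x_m)=(-1)^{m+1}$; (iv) because $\s_{\l,\m}(z_m,1)=\sin_{\m}^D(z_m)>0$, a zero in $(x_m,1)$ occurs precisely when $\operatorname{sign}\s_{\l,\m}(z_m,x_m)<0$, i.e.\ precisely when $m$ is even. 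Therefore $\#\{k\ge1:\l_{D,k}<\l_{N,m}\}=m$ for even $m$ and $=m-1$ for odd $m$.

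Finally I would assemble the ordering. For $m=2j+1$ the count gives $\l_{D,2j}<\l_{N,2j+1}<\l_{D,2j+1}$ (read as $\l_{N,1}<\l_{D,1}$ when $j=0$), and for $m=2j+2$ it gives $\l_{D,2j+2}<\l_{N,2j+2}<\l_{D,2j+3}$; together with the monotonicity $\l_{N,2j}<\l_{N,2j+1}$ and $\l_{D,2j+1}<\l_{D,2j+2}$ this yields, for every $j\ge0$,
\[\l_{N,2j}<\l_{N,2j+1}<\l_{D,2j+1}<\l_{D,2j+2}<\l_{N,2j+2},\]
and concatenating these starting from $\l_{N,0}=0$ gives exactly the asserted chain. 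The main obstacle is not this bookkeeping but establishing, in precisely the form used in the third paragraph, the oscillation-theoretic facts for $\Delta_{\m}$ with a singular self-similar $\m$ whose support is a Cantor-type set: Sturm separation of the two solutions, simplicity of the zeros of $\sin_{\m}^N$ and $\sin_{\m}^D$, and the exact equality between the index of a Neumann or Dirichlet eigenvalue and the number of interior zeros of its eigenfunction — including the behaviour on the countably many intervals of $[0,1]\setminus\supp\m$ on which eigenfunctions are affine. These ingredients should be extractable from the cited work of Freiberg and of Freiberg and L\"obus, but pinning them down rigorously is the real content of the argument.
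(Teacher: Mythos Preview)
This statement is a \emph{conjecture} in the paper; there is no proof given there to compare against. Your outline therefore goes well beyond what the paper does, and the core new observation is correct: from the recursion for $(a_k)$ in the lemma of Section~\ref{sec:r1m1gleich} one gets $a_0=1-(r_1+r_2)>0$, $a_1=(1-(r_1+r_2))^2>0$, and for $k\ge2$ the factor $r_1(r_2/r_1)^{2^{k-1}}+r_2(r_1/r_2)^{2^{k-1}}$ is positive, so $a_k>0$ by induction; hence $\sin_{\m}^D(z_m)>0$ for every $m\ge1$. Your restriction $r_1+r_2<1$ is also necessary, since for $r_1+r_2=1$ Theorem~\ref{th:DirandNeusame} forces $\l_{N,m}=\l_{D,m}$ and the strict chain fails (the paper's reference to Examples~\ref{exa4} and~\ref{exa5} in the conjecture is therefore somewhat loose). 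The Wronskian identity $\s_{\l,\m}'\,\c_{\l,\m}-\s_{\l,\m}\,\c_{\l,\m}'\equiv z$ follows exactly as you say from Theorem~\ref{pythallg} and Lemma~\ref{diffrules}, and since both solutions and their Lebesgue derivatives are continuous on $[0,1]$ (the latter being $z\c_{\m,\l}$ and $-z\s_{\m,\l}$), the usual Sturm separation argument goes through verbatim; your sign bookkeeping on $(x_m,1)$ is correct once one notes that any zero there is transversal.

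The genuine gap is the one you yourself flag: the exact oscillation count, namely that $\c_{\l,\m}(z_m,\cdot)$ has precisely $m$ zeros in $(0,1)$ and that the number of interior zeros of $\s_{\l,\m}(z,\cdot)$ equals $\#\{k:\l_{D,k}<z^2\}$. These are the Sturm oscillation theorems for $\Delta_{\m}$, and they are not proved in the present paper. The Pr\"ufer-angle framework of Volkmer~\cite{volkmer:eigenvalues} and Freiberg~\cite{freiberg:pruefer} is the right tool (and is what underlies Theorem~\ref{th:ssnev}), while Freiberg--L\"obus~\cite{freiberg:zeros} treats the Dirichlet zero count; you would also need simplicity of the zeros of $\sin_{\m}^N$ and $\sin_{\m}^D$ as functions of $z$, which amounts to showing the $z$-derivative of $\s_{\l,\m}(z,1)$ (resp.\ $\s_{\m,\l}(z,1)$) is nonzero at a root---this follows from a standard variation-of-parameters identity expressing that derivative in terms of $\norm{f_{D,m}}{L_2(\m)}^2$ (resp.\ $\norm{f_{N,m}}{L_2(\m)}^2$), but it needs to be written out in the $\m$-calculus. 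Modulo extracting these ingredients carefully---including the harmless but necessary observation that eigenfunctions are affine on the gap intervals of $\supp\m$, so each gap contributes at most one zero---your argument would constitute a proof of the conjecture in the case $r_1+r_2<1$.
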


\begin{rem2}
It would be very interesting to find out, if there was a relation between our sequences $(p_n)_n$ and $(q_n)_n$ to any known number sequences as e.g. Bernoulli or Euler numbers. Indeed, the definition of $p_n(x)$ or $q_n(x)$ (Definition \ref{defpq}) is reminiscent of the recursive definition of the Euler polynomials $E_n(x)$ by $E_0(x):=1$ and
\[ E_n(x):= \int_c^x n E_{n-1}(t)\,dt,\]
where $c= \frac{1}{2}$ if $n$ is odd and $c=0$ for even $n$. Then the $n$th Euler number is $E_n=2^n E_n(1/2)$.

Furthermore, Equation \eqref{evreceq} has a similar structure as the recursion rule 
\[\alpha_n =\frac{1}{2n} \sum_{j=0}^{n-1} \alpha_j \, \alpha_{n-1-j}\]
with $\alpha_0=\alpha_1=1$, where $\alpha_n = \frac{1}{n!} \abs{E_{2n}}$.
\end{rem2}

\begin{rem2}
One could investigate the functional equations in Theorem \ref{th1} further. In the simple case where $r_1=r_2=\frac{1}{3}$ and $m_1=m_2=\frac{1}{2}$, for instance, we can transform them (after eliminating terms containing $\sin_{\m}^D$ by using $\cos_{\m}^N(z)^2+\sin_{\m}^N(z)\,\sin_{\m}^D(z)=1$)  with the abbreviations $u(z)=z\,\sin_{\m}^N(z)$ and $v(z) = 2 \, \cos_{\m}^N(z)$ to
\begin{align*}
u\bigl(\sqrt{6}z\bigr)	& = 3\, u(z)\,v(z)\ -\hspace{0.6cm} 3\, u(z)^2\\
v\bigl(\sqrt{6}z\bigr)	& = \phantom{3\, u(z)\,} v(z)^2 - v(z)\,u(z)-2.
\end{align*}
From this one can derive recursion formulas for the sequence $(p_n)_n$ that contain only members of $p_n$ and not, as in Corollary \ref{rec}, both $p_n$ and $q_n$.

Furthermore, it could be possible to somehow solve these functional equations to get a more direct representation of $\sin_{\m}^N$ and $\cos_{\m}^N$.
\end{rem2}

\begin{rem2}
We defined our functions $\sin_{\m}^N$, $\sin_{\m}^D$, $\cos_{\m}^N$ and $\cos_{\m}^D$ only for real arguments. However, one can just allow the argument to be complex. Then these power series can be treated with methods of complex analysis.
\end{rem2}

\begin{rem2}
It is also interesting to consider the eigenvalue problem
\[ \frac{d}{d\mu}\frac{d}{d\nu} f = - \l f\]
with appropriate boundary conditions, where both $\m$ and $\nu$ are non-atomic finite Borel measures. This can be done by modifying the above considerations by replacing $\l$ with $\nu$. 

The case where both derivatives are with respect to the same measure, that is, $\mu=\nu$ is much simpler.
There we get
\[ \c_{\m,\m}(z,x)=\cos\bigl(z\,p_1(x)\bigr)\]
and
\[ \s_{\m,\m}(z,x)=\sin\bigl(z\,p_1(x)\bigr).\]
The eigenvalues are $\l_k=k^2\pi^2$, $k\in \N$, as in the classical Lebesgue measure case. This is treated in Arzt and Freiberg  \cite{arzt:ddmddm}. 
See also Freiberg and Zähle \cite{freiberg:harmonic}.

%it follows that
%\[ \c_{\m,\m}(z,x)=\cos\bigl(z\,p_1(x)\bigr)\]
%and
%\[ \s_{\m,\m}(z,x)=\sin\bigl(z\,p_1(x)\bigr).\]
%The eigenvalues are $\l_k=k^2\pi^2$, $k\in \N$, as in the classical Lebesgue measure case. This is treated in Freiberg and Arzt \cite{arzt:ddmddm}. 
\end{rem2}

\begin{rem2}
Our recursion law for $p_n$ and $q_n$ works only for self-similar measures with $r_1+r_2 \leq 1$. It would be interesting to develop similar formulas for measures with overlaps, i.e. with $r_1+r_2 > 1$. Such measures are treated for example in Ngai \cite{ngai:overlaps} and Chen and Ngai \cite{chen:eigenvalues}, which contains, in particular, numerical solutions of the eigenvalue problem by the finite elements method.
\end{rem2}

\begin{rem2}
In this work, we examined the eigenvalues of $-\frac{d}{d\m}\frac{d}{dx}$ by following the basic lines of the treatment of the classical second derivative operator on the interval. In this classical case all eigenvalues are multiples of $\pi^2$ and have therefore direct representations in many forms, e.g. by using the series expansion of $\arctan$. Maybe one can find a series representation of eigenvalues of the generalized operator, too, by using such functions as $\sin_{\m}^N$, $\sin_{\m}^D$, $\cos_{\m}^N$ and $\cos_{\m}^D$.
\end{rem2}

\begin{rem2}
In Corollary \ref{cor:upperandlowerest} we stated upper and lower estimates for subsequences $\bigl( \norm{\tilde{f}_{2^k l}}{\infty}\bigr)_k$, $l$ odd, of the suprema of the normed eigenfunctions. We have no information about the growth of the sequence $\bigl( \norm{\tilde{f}_{2k+1}}{\infty}\bigr)_k$, though.

Such estimates could be used to prove estimates of the heat kernel
\[K(t,x,y) = \sum_{m=1}^\infty e^{-\l_m t}\tilde{f}_m(x)\,\tilde{f}_m(y)\]
for the corresponding quasi-diffusion process. This process has been investigated for example in Löbus \cite{lobus:diffusions} and 
Küchler \cites{kuchler:asymptotics, kuchler:sojourn}.
\end{rem2}

\begin{rem2}
We used the functions $p_n(x)$ and $q_n(x)$, $x\in [0,1]$, defined in Definition~\ref{defpq} to replace monomials $\frac{1}{n!}x^n$ in the classical case. One could use these functions to build a kind of generalized polynomials that are adjusted to the measure $\m$. For instance, we take the sequence
\[ \tilde{P}_0(x) = 1, \quad \tilde{P}_1(x) = q_1(x), \quad \tilde{P}_2(x) = p_2(x), \quad \tilde{P}_3(x)= q_3(x), \quad \dotsc\]
and orthogonalize it in $L_2(\m)$ by using the Gram-Schmidt process. We take odd numbered $q_n(x)$ and even numbered $p_n(x)$, because they are the building blocks for the eigenfunctions $\s_{\l,\m}(z,\cdot)$ and $\c_{\l,\m}(z,\cdot)$ and they are continuously Lebesgue-differentiable, namely 
\[ q_n'(x)=q_{n-1}(x) \quad \text{for odd $n$}\]
and 
\[ p_n'(x)=p_{n-1}(x) \quad \text{for even $n$.}\]
Furthermore, we can $\m$-integrate them and get 
\[ \int_0^x q_n(t)\,d\m(t) = q_{n+1}(x) \quad \text{for odd $n$}\]
and
\[ \int_0^x p_n(t)\,d\m(t) = p_{n+1}(x) \quad \text{for even $n$}.\]
With that we can apply the generalized integration by parts rule from Lemma \ref{parts} to do the calculations in the Gram-Schmidt algorithm. Note again that we use the notation $p_n:=p_n(1)$ and $q_n:=q_n(1)$ and assume that those numbers are given since we have a recursion rule in the self-similar case. Then we get
\[ P_0(x):= 1,\]
\[ P_1(x):= q_1(x) - \int_0^1 q_1(t)\, d\m(t) = q_1(x) - q_2,\]
\[ P_2(x):= p_2(x) - \int_0^1 p_2(t)\, d\m(t) - \frac{\ds \int_0^1 \bigl(q_1(t)-q_2\bigr)\,p_2(t)\,d\m(t)}{\ds\int_0^1 \bigl(q_1(t)- q_2\bigr)^2\,d\m(t)}
						\bigl(q_1(x)-q_2\bigr).\]
We calculate
\begin{align*}
\int_0^1 \bigl(q_1(t)-q_2\bigr)\,p_2(t)\,d\m(t)	& = \int_0^1 q_1(t)\,p_2(t)\, d\m(t)-\int_0^1 q_2\,p_2(t)\,d\m(t)\\
																								& = q_1(t)\,p_3(t)\Bigr|_0^1 - \int_0^1 p_3(t)\,dt - q_2p_3\\
																								& = q_1p_3 - p_4 - q_2p_3
\end{align*}
and
\begin{align*}
\int_0^1 \bigl(q_1(t)- q_2\bigr)^2\,d\m(t)	& = \int_0^1  q_1(t)^2\,d\m(t) - 2 q_2\int_0^1 q_1(t)\,d\m(t) +  q_2^2 \int_0^1\,d\m(t)\\
																						& = q_1q_2- \int_0^1 q_2(t)\,dt - 2q_2^2 + q_2^2 p_1\\
																						& = q_1q_2 - q_3 - 2 q_2^2 + q_2^2p_1.
\end{align*}
To simplify these expressions a bit we utilize $p_1=q_1=1$ which follows from the definition and $p_2+q_2=1$ which follows from Corollary \ref{sumpqgleich0} by putting $n=1$.
Then we get
\[ P_2(x) = p_2(x) - \frac{p_4-p_2\,p_3}{q_3- p_2\,q_2}\,q_1(x) + \frac{q_2\,p_4- q_3\,p_3}{q_3-p_2\,q_2}.\]
In this fashion one can calculate a sequence of $L_2(\m)$-orthogonal ``polynomials''. 

As an example we take the Lebesgue measure for $\m$ and put $p_n(x)=q_n(x)=\frac{1}{n!}x^n$. Then
\[ P_0(x)=1, \quad P_1(x)= x-\frac{1}{2}, \quad P_2(x) = \frac{1}{2}x^2- \frac{1}{2}x + \frac{1}{12}\]
which are the first Legendre polynomials on $[0,1]$ (not normed).

If $\m$ is the standard Cantor measure, then $p_2=q_2=\frac{1}{2}$, $p_3=\frac{1}{5}$ and $q_3 = \frac{1}{8}$ and we get
\[P_0(x)=1, \quad P_1(x)= q_1(x) - \frac{1}{2}, \quad P_2(x) = p_2(x) - \frac{1}{2} q_1(x) + \frac{1}{20}.\]
Maybe one can use these functions for further analytical studies.
\end{rem2}

\begin{rem2}
With the presented methods one could investigate not only the equation $\frac{d}{d\m}f'=-\l f$, but maybe other differential equations on the interval $[0,1]$ that are generalized involving a self-similar measure $\m$.
\end{rem2}

\begin{rem2}[Fourier series]
It is well known that the normed eigenfunctions $(\tilde{f}_{N,k})_{k=0}^\infty$ and $(\tilde{f}_{D,k})_{k=1}^\infty$ form  orthonormal bases in $L_2(\m)$ (see \cite{freiberg:analytical}).

We denote $n_{N,k}:= \norm{\c_{\l,\m}(\sqrt{\l_{N,k}}, \cdot )}{L_2(\m)}$ and $n_{D,k}:= \norm{\s_{\l,\m}(\sqrt{\l_{D,k}},\cdot)}{L_2(\m)}$ so that
\[ \tilde{f}_{N,k} = \frac{1}{n_{N,k}} \c_{\l,\m}(\sqrt{\l_{N,k}}, \cdot )\]
and
\[ \tilde{f}_{D,k} = \frac{1}{n_{D,k}} \s_{\l,\m}(\sqrt{\l_{D,k}},\cdot).\]
We decompose some functions $f\in L_2(\m)$ into series of eigenfunctions (Fourier series), ignoring questions about convergence for the moment. 
Assume that for $x\in [0,1]$
\[ f(x) = \sum_{k=0}^\infty a_k \tilde{f}_{N,k}(x)\]
with 
\[ a_k = \int_0^1 f(t)\, \tilde{f}_{N,k}(t)\,d\m(t).\]
For reasons of simplicity, we take $\m$ to be a symmetric measure. Then $\cos_{\m}^N = \cos_{\m}^D$ and we have $\cos_{\m}^N(z)^2 + \sin_{\m}^N(z)\,\sin_{\m}^D(z) = 1$. From that follows that $\cos_{\m}^N(\sqrt{\l_{N,k}})^2=1$ and it is heuristically clear that $\cos_{\m}^N(\sqrt{\l_{N,k}})=(-1)^k$. Employing this fact and Lemma~\ref{parts}, the computations can be made explicitly, following the lines of the classical (Euclidean) case.

As a first example, take $f(x)= x$. Then, for $k\in \N$,
\begin{align*}
a_k	& = \frac{1}{n_{N,k}} \int_0^1 t\cdot \c_{\l,\m}(\sqrt{\l_k}, t )\,d\m(t)\\
		& = \frac{1}{n_{N,k}} \biggl[ \frac{1}{\sqrt{\l_{N,k}}}\,t\,\s_{\m,\l}\bigl(\sqrt{\l_{N,k}}, t\bigr)\biggr|_0^1 - \frac{1}{\sqrt{\l_{N,k}}} \int_0^1 \s_{\m,\l}\bigl(\sqrt{\l_{N,k}}, t\bigr)\,dt\biggr]\\
		& = \frac{1}{n_{N,k} \l_{N,k}} \c_{\l,\m}(\sqrt{\l_{N,k}}, t )\Bigr|_0^1\\
		& = \frac{1}{n_{N,k} \l_{N,k}} \Bigl( \cos_{\m}^N(\sqrt{\l_{N,k}})-1\Bigr).
\end{align*}
Thus, $a_k = 0$ for even $k\geq 1$ and $a_k=-\dfrac{2}{n_{N,k}\l_{N,k}}$ for odd $k$. Furthermore, we have
\[ a_0 = \int_0^1 t \, d\m(t) = q_2(1)= q_2.\]
Therefore, we have the decomposition into Neumann eigenfunctions
\[ x = q_2 - 2 \sum_{k=0}^\infty \frac{1}{c_{N,2k+1} \l_{N,2k+1}}\,\tilde{f}_{N,2k+1}(x).\]
Note that the required norms $n_{N,k}$ can be computed with Corollary \ref{cor:l2norm}.

We apply Parseval's identity to this series. This gives
\[ \int_0^1 t^2 \, d\m(t) = q_2^2 + \sum_{k=0}^\infty \frac{4}{c_{N,2k+1}^2\, \l_{N,2k+1}^2},\]
and with
\[ \int_0^1 t^2 \, d\m(t) = t\,q_2(t)\Bigr|_0^1 - \int_0^1 q_2(t)\,dt = q_2 - q_3\]
and $1-q_2=p_2$ we get
\[\sum_{k=0}^\infty \frac{1}{c_{N,2k+1}^2\, \l_{N,2k+1}^2}= \frac{1}{4} (p_2\,q_2 - q_3).\]
If we choose the Lebesgue measure for $\m$  (then $p_2=q_2=\frac{1}{2}$, $q_3=\frac{1}{6}$ and $c_{N,2k+1}^2=\frac{1}{2}$), the above equation becomes the well known identity 
\[ \sum_{k=0}^\infty \frac{1}{(2k+1)^4} = \frac{\pi^4}{96}.\]
In the same fashion we compute the decomposition of some more examples ($\m$ symmetric):
\begin{align*}
 x							& = \sum_{k=1}^\infty \frac{(-1)^{k+1}}{n_{D,k}\,\sqrt{\l_{D,k}}}\tilde{f}_{D,k}(x)	\\
1								& = \sum_{k=0}^\infty \frac{2}{c_{D,2k+1}\,\sqrt{\l_{D,2k+1}}}\, \tilde{f}_{D,2k+1}(x)\\
 f_{D,2n+1}(x)	& = \frac{2}{\sqrt{\l_{D,2n+1}}} - 2\sqrt{\l_{D,2n+1}} \sum_{k=1}^\infty \frac{1}{\bigl(\l_{N,2k}-\l_{D,2n+1}\bigr) c_{N,2k}} \, \tilde{f}_{N,2k}(x),\\
								& \qquad \text{for every $n\in\N_0$}
\end{align*}
which are plotted for the standard middle third Cantor measure in Figures \ref{imgfour1} and \ref{imgfour2}. For the images we computed graphs of the eigenfunctions by iteratively applying the formulas in Propositions \ref{cpzsqzS1} and \ref{cpzsqzS2}. For the normalization we used the norms $n_{N,k}$ and $n_{D,k}$ shown in Tables \ref{tab:nef1312} and \ref{tab:def1312}.

Applying Parseval's identity to these decompositions leads, as above, to
\begin{align*}
\sum_{k=1}^\infty \frac{1}{n_{D,k}^2 \l_{D,k}}	& = q_2 - q_3	\\
\sum_{k=0}^\infty \frac{1}{c_{D,2k+1}^2 \, \l_{D,2k+1}}	& = \frac{1}{4}\\
\sum_{k=1}^\infty \frac{1}{\bigl(\l_{N,2k}- \l_{D,2n+1}\bigr)^2\,c_{N,2k}^2} &= \frac{c_{D,2n+1}^2}{4\l_{D,2n+1}}- \frac{1}{\l_{D,2n+1}^2}.
\end{align*}
If we again  take the Lebesgue measure for $\m$, we receive the well known identities
\begin{align*}
\sum_{k=1}^\infty \frac{1}{k^2}	& = \frac{\pi^2}{6}\\
\sum_{k=0}^\infty \frac{1}{(2k+1)^2}	& = \frac{\pi^2}{8}\\
\sum_{k=1}^\infty \frac{1}{\bigl(4k^2-(2n+1)^2\bigr)^2}&=\frac{\pi^2}{16(2n+1)^2} - \frac{1}{2(2n+1)^4}.
\end{align*}

\begin{figure}
\begin{minipage}{.5\textwidth}
\centering
\includegraphics[width=\textwidth]{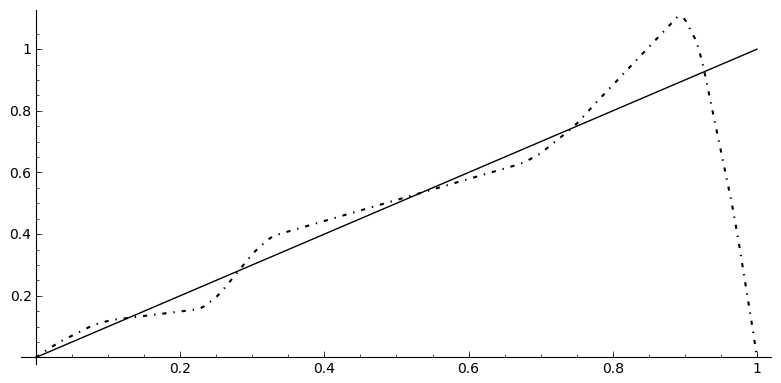}
\end{minipage}
\begin{minipage}{.5\textwidth}
\centering
\includegraphics[width=\textwidth]{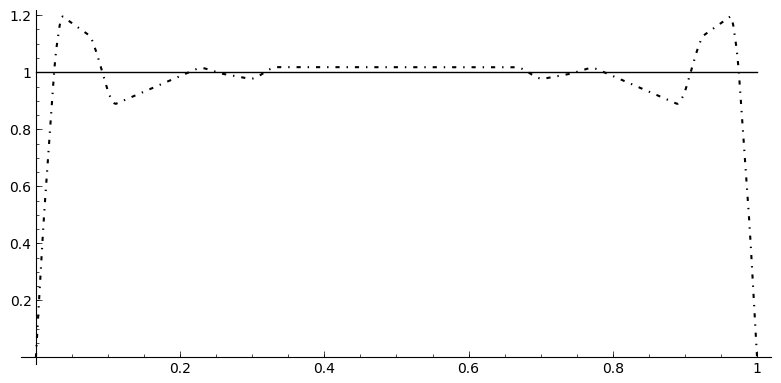}
\end{minipage}
\caption{Left side: approximation of $f(x)=x$ by the first 5 terms of the Dirichlet Fourier series for $r_1=r_2=\frac{1}{3}$ and $m_1=m_2=\frac{1}{2}$. \newline
Right side: approximation of $f(x)=1$ by the first 5 terms of the Dirichlet Fourier series for $r_1=r_2=\frac{1}{3}$ and $m_1=m_2=\frac{1}{2}$.}
\label{imgfour1}
\end{figure}

\begin{figure}
\begin{minipage}{.5\textwidth}
\centering
\includegraphics[width=\textwidth]{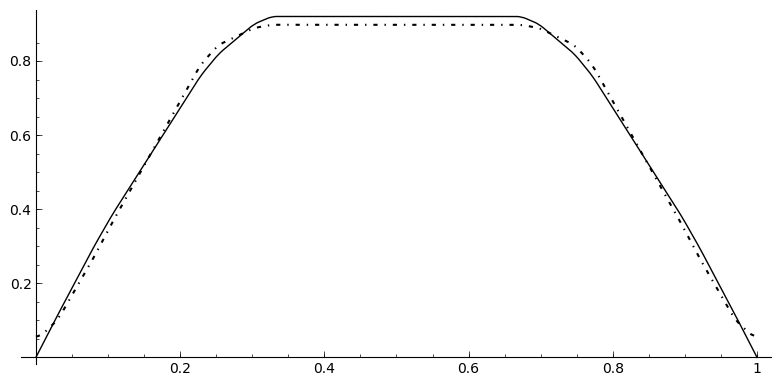}
\end{minipage}
\begin{minipage}{.5\textwidth}
\centering
\includegraphics[width=\textwidth]{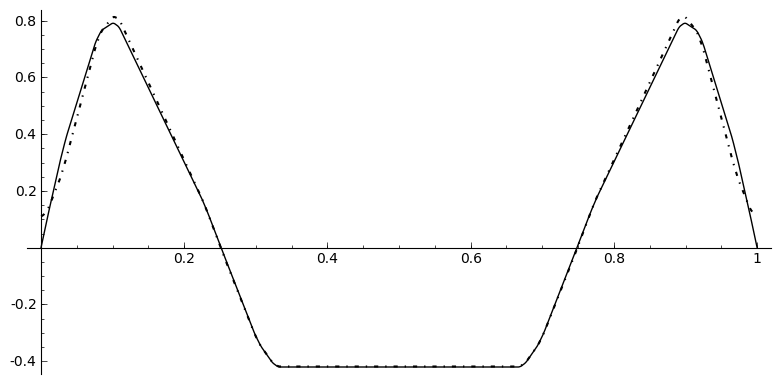}
\end{minipage}
\caption{Left side: approximation of $f_{D,1}$ by the first 3 terms of the Neumann Fourier series for $r_1=r_2=\frac{1}{3}$ and $m_1=m_2=\frac{1}{2}$.\newline
Right side: approximation of $f_{D,3}$ by the first 3 terms of the Neumann Fourier series  for $r_1=r_2=\frac{1}{3}$ and $m_1=m_2=\frac{1}{2}$.}
\label{imgfour2}
\end{figure}
\end{rem2}

\begin{rem2}
The definition of the operator $-\frac{d}{d\m}\frac{d}{dx}$ can be extended to subsets of $\R^d$, $d\in \N$, see, for example, 
Solomyak and Verbitsky \cite{solomyak:spectral}, Naimark and Solomyak \cite{naimark:eigenvalue} and Hu, Lau and Ngai \cite{hu:laplace}. This case, however, is substantially more difficult and the techniques presented here can probably not be readily  extended to it.
\end{rem2}

\begin{rem2}
Analogously to our measure trigonometric functions we can define measure theoretic exponential functions.
For $x\in [0,1]$ and $z\in \C$ put
\[ \e_{\l,\m}(z,x):= \sum_{n=0}^\infty z^{2n}\, p_{2n}(x) + \sum_{n=0}^\infty z^{2n+1}\, q_{2n+1}(x)\]
and
\[ \e_{\m,\l}(z,x):= \sum_{n=0}^\infty z^{2n}\, q_{2n}(x) + \sum_{n=0}^\infty z^{2n+1}\, p_{2n+1}(x).\]
Then, $\e_{\l,\m}(z,\cdot) \in H^2(\l,\m)$ and $\e_{\m,\l}(z,\cdot) \in H^2(\m,\l)$ for every $z \in \C$.
Furthermore, for all $t \in \R$ and $x\in [0,1]$, we have Euler's formula
\[ \e_{\l,\m}(it, x) = \c_{\l,\m}(t,x) + i \s_{\l,\m}(t,x)\]
and
\[ \e_{\m,\l}(it, x) = \c_{\m,\l}(t,x) + i \s_{\m,\l}(t,x).\]

\end{rem2}

%\nocite{*}
\addcontentsline{toc}{chapter}{Bibliography}
\bibliography{ref}

\newpage
\appendix
\section{Program code}

Here we present the code that was used to compute the values and graphics given in Section \ref{numerical}. It was written in Sagemath cloud \cite{sage}, which is based on Python.

Each of the four programs run independently if copied into an empty Sage worksheet, but note that Python is sensitive to indentation.

\paragraph{Program 1} This is used to compute and print $p_n$ and $q_n$.
\begin{verbatim}
# import mpmath for control over the accuracy in the calculations
# mp.dps determines the number of significant digits
# this is not needed if you give the parameters below as fractions, then
# the calculations are done symbolically

from mpmath import mp, mpf
mp.dps = 100
mp.pretty = True

# we set the parameters of the self-similar measure \mu
# r_1 and r_2 are the scaling factors, 
# m_1 and m_2 the weight factors, m_1 + m_2 =1,
# if the values are given as fraction like r1 = 1/3, the computation
# is done symbolically,
# if you give decimal values, write r1=mpf(0.333), then the calculation
# is done with
# the number of significant digits given by mp.dps above

r1=1/3
r2=1/3
m1=1/2
m2=1/2

# we calculate p_n and q_n with the four formulas in Corollary 7.11
# the values are put in a list, the n-th entry is accessed
# by p[n] or q[n], n=0,1,2,...

p=[1,1]
q=[1,1]

for n in range(1,10):

    #p[2n]
    p.append(1/(1-(r1*m1)^n-(r2*m2)^n)*(sum([(r1*m1)^i * (r2*m2)^(n-i)
* p[2*i] * p[2*n-2*i] for i in range(1,n)]) + sum([r1^i * m1^(i+1)
* r2^(n-i) * m2^(n-i-1) * p[2*i+1] * q[2*n-2*i-1] for i in range(0,n)])
+ (1-r1-r2)* sum([r1^i * m1^(i+1) * (r2 * m2)^(n-i-1) * p[2*i+1]
* p[2*n-2*i-2] for i in range(0,n)])))

    #q[2n]
    q.append(1/(1-(r1*m1)^n-(r2*m2)^n)*(sum([(r1*m1)^i * (r2*m2)^(n-i)
* q[2*i] * q[2*n-2*i] for i in range(1,n)]) + sum([r1^(i+1) * m1^i
* r2^(n-i-1) * m2^(n-i) * q[2*i+1] * p[2*n-2*i-1] for i in range(0,n)])
+ (1-r1-r2)* sum([(r1 * m1)^i * r2^(n-i-1) * m2^(n-i) * q[2*i]
* p[2*n-2*i-1] for i in range(0,n)])))

    #p[2n+1]
    p.append(1/(1-r1^n*m1^(n+1)-r2^n*m2^(n+1))*(sum([r1^i*m1^(i+1)
* (r2*m2)^(n-i) * p[2*i+1] * q[2*n-2*i] for i in range(0,n)])
+ sum([r1^i * m1^i * r2^(n-i) * m2^(n-i+1) * p[2*i] * p[2*n-2*i+1]
for i in range(1,n+1)]) + (1-r1-r2)* sum([r1^i * m1^(i+1) * r2^(n-i-1)
* m2^(n-i) * p[2*i+1] * p[2*n-2*i-1] for i in range(0,n)])))

    #q[2n+1]
    q.append(1/(1-r1^(n+1)*m1^n-r2^(n+1)*m2^n)*(sum([r1^(i+1)*m1^i
* (r2*m2)^(n-i) * q[2*i+1] * p[2*n-2*i] for i in range(0,n)])
+ sum([r1^i * m1^i * r2^(n-i+1) * m2^(n-i) * q[2*i] * q[2*n-2*i+1]
for i in range(1,n+1)]) + (1-r1-r2)* sum([r1^i * m1^i * (r2 * m2)^(n-i)
* q[2*i] * p[2*n-2*i] for i in range(0,n+1)])))


#prints a table of the p_n from p_0 to p_19

print(' n  p_n');
for n in range(0,20):
    print('{0:2}  {1}'.format(n,p[n]));


#prints a table of the q_n from q_0 to q_19

print(' n  q_n');
for n in range(0,20):
    print('{0:2}  {1}'.format(n,q[n]));
\end{verbatim}

\paragraph{Program 2}  This first computes $p_n$ and $q_n$ for $n=0,\dotsc, 2\text{an}-1$, then plots the function 
\[ \frac{\sin_{\mu}^N (\sqrt{z})}{\sqrt{z}} \approx \sum_{k=0}^{\text{an}-1}(-1)^k\,  p_{2k+1}\, z^k, \quad z>0,\]
whose zero points are the Neumann eigenvalues. Then, with starting points read off the plot, 
it computes the Neumann eigenvalues numerically. The same is done with the Dirichlet eigenvalues.

\begin{verbatim}
# import mpmath for contol over the accuracy in the calculations
# mp.dps determines the number of significant digits
# findroot enables us to find roots with accuracy given by mp.dps

from mpmath import mp, mpf, findroot
mp.dps = 80
mp.pretty = True

# we set the parameters of the self-similar measure \mu
# r_1 and r_2 are the scaling factors,
# m_1 and m_2 the weight factors, m_1 + m_2 =1,
# for faster computation, write r1=mpf(1/3), ..., this converts the
# fraction in the mpmath-float format with number of significant
# digits given by mp.dps

r1=1/3
r2=1/3
m1=1/2
m2=1/2

# we calculate p_n and q_n with the four formulas in Corollary 7.11
# the values are put in a list, the n-th entry is accessed by p[n] 
# or q[n], n=0,1,2,...
# 'an' gives the number of terms we compute
# that is, we get p_0,..., p_{2*an-1} and q_0, ..., q_{2*an-1}

an = 50;

p=[1,1]
q=[1,1]

for n in range(1,an):

    #p[2n]
    p.append(1/(1-(r1*m1)^n-(r2*m2)^n)*(sum([(r1*m1)^i * (r2*m2)^(n-i)
* p[2*i] * p[2*n-2*i] for i in range(1,n)]) + sum([r1^i * m1^(i+1)
* r2^(n-i) * m2^(n-i-1) * p[2*i+1] * q[2*n-2*i-1] for i in range(0,n)])
+ (1-r1-r2)* sum([r1^i * m1^(i+1) * (r2 * m2)^(n-i-1) * p[2*i+1]
* p[2*n-2*i-2] for i in range(0,n)])))

    #q[2n]
    q.append(1/(1-(r1*m1)^n-(r2*m2)^n)*(sum([(r1*m1)^i * (r2*m2)^(n-i)
* q[2*i] * q[2*n-2*i] for i in range(1,n)]) + sum([r1^(i+1) * m1^i
* r2^(n-i-1) * m2^(n-i) * q[2*i+1] * p[2*n-2*i-1] for i in range(0,n)])
+ (1-r1-r2)* sum([(r1 * m1)^i * r2^(n-i-1) * m2^(n-i) * q[2*i]
* p[2*n-2*i-1] for i in range(0,n)])))

    #p[2n+1]
    p.append(1/(1-r1^n*m1^(n+1)-r2^n*m2^(n+1))*(sum([r1^i*m1^(i+1)
* (r2*m2)^(n-i) * p[2*i+1] * q[2*n-2*i] for i in range(0,n)])
+ sum([r1^i * m1^i * r2^(n-i) * m2^(n-i+1) * p[2*i] * p[2*n-2*i+1]
for i in range(1,n+1)]) + (1-r1-r2)* sum([r1^i * m1^(i+1) * r2^(n-i-1)
* m2^(n-i) * p[2*i+1] * p[2*n-2*i-1] for i in range(0,n)])))

    #q[2n+1]
    q.append(1/(1-r1^(n+1)*m1^n-r2^(n+1)*m2^n)*(sum([r1^(i+1)*m1^i
* (r2*m2)^(n-i) * q[2*i+1] * p[2*n-2*i] for i in range(0,n)])
+ sum([r1^i * m1^i * r2^(n-i+1) * m2^(n-i) * q[2*i] * q[2*n-2*i+1]
for i in range(1,n+1)]) + (1-r1-r2)* sum([r1^i * m1^i * (r2 * m2)^(n-i)
* q[2*i] * p[2*n-2*i] for i in range(0,n+1)])))



# defines the function f(z) = \frac{\sin_{\mu}^N (\sqrt{z})}{\sqrt{z}},
# see the explanation in Example 10.1,
# 'an' denotes the number of considered summands of the series,
# z=mpf(z) converts the argument to mpmath float format for higher
# precision,
# the zeros of this function are the Neumann eigenvalues


def f(z):
    z=mpf(z);
    return sum([(-1)^k * p[2*k+1]*z^k for k in range(0,an)])


# plots the function f on (0,400),
# if 'an' is big enough, you can read approximate values for the Neumann
# eigenvalues from the graph

plot(f,(0,400));


# calculates an approximation of Neumann eigenvalue near the given
# starting point,
# starting points can be read off the plot,
# accuracy depends on 'an' and 'mp.dps'

findroot(f,10);
findroot(f,40);
findroot(f,60);
findroot(f,250);
findroot(f,270);
findroot(f,360);
findroot(f,380);

# defines the function g(z) = \frac{\sin_{\mu}^D (\sqrt{z})}{\sqrt{z}},
# analogously to the Neumann case
# the zeros of this function are the Dirichlet eigenvalues

def g(z):
    z=mpf(z);
    return sum([(-1)^k * q[2*k+1]*mpf(z)^k for k in range(0,an)])


# plots the function g on (0,400)
# if 'an' is big enough, you can read approximate values for the 
# Dirichlet eigenvalues from the graph

plot(g,(0,400));


# calculates an approximation of Dirichlet eigenvalue near the given
# starting point,
# starting points can be read off the plot,
# accuracy depends on 'an' and 'mp.dps'

findroot(g,15);
findroot(g,35);
findroot(g,140);
findroot(g,150);
findroot(g,320);
findroot(g,350);
\end{verbatim}

\paragraph{Program 3} This plots $\sin_{\m}^N$, $\sin_{\m}^D$, $\cos_{\m}^N$ and $\cos_{\m}^D$ as well as the first Neumann and Dirichlet eigenfunctions. Furthermore, it gives approximate values for the suprema of these eigenfunctions.

\begin{verbatim}
from mpmath import mp, mpf, findroot
mp.dps = 80
mp.pretty = True

r1=mpf(1/3)
r2=mpf(1/3)
m1=mpf(1/2)
m2=mpf(1/2)

an = 50;

p=[1,1]
q=[1,1]

for n in range(1,an):

    #p[2n]
    p.append(1/(1-(r1*m1)^n-(r2*m2)^n)*(sum([(r1*m1)^i * (r2*m2)^(n-i)
* p[2*i] * p[2*n-2*i] for i in range(1,n)]) + sum([r1^i * m1^(i+1)
* r2^(n-i) * m2^(n-i-1) * p[2*i+1] * q[2*n-2*i-1] for i in range(0,n)])
+ (1-r1-r2)* sum([r1^i * m1^(i+1) * (r2 * m2)^(n-i-1) * p[2*i+1]
* p[2*n-2*i-2] for i in range(0,n)])))

    #q[2n]
    q.append(1/(1-(r1*m1)^n-(r2*m2)^n)*(sum([(r1*m1)^i * (r2*m2)^(n-i)
* q[2*i] * q[2*n-2*i] for i in range(1,n)]) + sum([r1^(i+1) * m1^i
* r2^(n-i-1) * m2^(n-i) * q[2*i+1] * p[2*n-2*i-1] for i in range(0,n)])
+ (1-r1-r2)* sum([(r1 * m1)^i * r2^(n-i-1) * m2^(n-i) * q[2*i]
* p[2*n-2*i-1] for i in range(0,n)])))

    #p[2n+1]
    p.append(1/(1-r1^n*m1^(n+1)-r2^n*m2^(n+1))*(sum([r1^i*m1^(i+1)
* (r2*m2)^(n-i) * p[2*i+1] * q[2*n-2*i] for i in range(0,n)])
+ sum([r1^i * m1^i * r2^(n-i) * m2^(n-i+1) * p[2*i] * p[2*n-2*i+1]
for i in range(1,n+1)]) + (1-r1-r2)* sum([r1^i * m1^(i+1) * r2^(n-i-1)
* m2^(n-i) * p[2*i+1] * p[2*n-2*i-1] for i in range(0,n)])))

    #q[2n+1]
    q.append(1/(1-r1^(n+1)*m1^n-r2^(n+1)*m2^n)*(sum([r1^(i+1)*m1^i
* (r2*m2)^(n-i) * q[2*i+1] * p[2*n-2*i] for i in range(0,n)])
+ sum([r1^i * m1^i * r2^(n-i+1) * m2^(n-i) * q[2*i] * q[2*n-2*i+1]
for i in range(1,n+1)]) + (1-r1-r2)* sum([r1^i * m1^i * (r2 * m2)^(n-i)
* q[2*i] * p[2*n-2*i] for i in range(0,n+1)])))


# defines \sin_{\mu}^N, \sin_{\mu}^D, \cos_{mu}^N and \cos_{mu}^D,
# again, only the first 'an' terms are considered

# z=mpf(z) converts the argument to mpmath float format for higher 
# precision

def sinN(z):
    z = mpf(z)
    return sum([(-1)^k * p[2*k+1]*z^(2*k+1) for k in range(0,an)])
def sinD(z):
    z = mpf(z)
    return sum([(-1)^k * q[2*k+1]*z^(2*k+1) for k in range(0,an)])
def cosN(z):
    z = mpf(z)
    return sum([(-1)^k * p[2*k]*z^(2*k) for k in range(0,an)])
def cosD(z):
    z = mpf(z)
    return sum([(-1)^k * q[2*k]*z^(2*k) for k in range(0,an)])


# plots the above defined functions

plot(sinN,(0,21))
plot(sinD,(0,21))
plot(cosN,(0,21))
plot(cosD,(0,21))


# next we plot the eigenfunctions (c_{\lambda,\mu}(z,\cdot) and 
# s_{\lambda,\mu}(z,\cdot) where z is the square root of an eigenvalue)
# we do this by iterative use of Propositions 7.7 and 7.8

# we define the IFS S_1, S_2

def S1(x):
    return r1*x
def S2(x):
    return r2*x-r2+1


# 'it' gives the number of iterations

it=4


# x is the list of 'corner points' of the self-similar set, that is,
# of the 'it'-th iteration

x=[0,1]

for i in range(it):
    x = map(S1,x) + map(S2,x)


# with auxiliary function f2, g1, g2, zit we construct values of 
# c_{\lambda, \mu} and s_{\lambda,\mu} iteratively at the points given 
# in 'x'

def f2(a,b,z):
    return (cosN( sqrt(r1*m1)*z ) - (1-(r1+r2))*sqrt(m1/r1)*z
*sinN(sqrt(r1*m1)*z))*a - sqrt(r2*m1/(r1*m2))*sinN(sqrt(r1*m1)*z)*b

def g1(a):
    return sqrt(r1/m1)*a

def g2(a,b,z):
    return (sqrt(r1/m1)*sinD( sqrt(r1*m1)*z ) + (1-(r1+r2))*z
*cosD(sqrt(r1*m1)*z))*a + sqrt(r2/m2)*cosD(sqrt(r1*m1)*z) * b

def zit(z,i):
    if i == 0:
        return [z,z]
    else:
        return zit(z,i-1) + zit(z, i-1)

def clm(z,i):
    if i == 0:
        return [1, cosN(z)]
    else:
        return clm(sqrt(r1*m1)*z,i-1) + map(f2,clm(sqrt(r2*m2)*z,i-1),
				slm( sqrt(r2*m2)*z, i-1 ),zit(z,i-1) )

def slm(z,i):
    if i == 0:
        return [0, sinD(z)]
    else:
        return map(g1, slm(sqrt(r1*m1)*z,i-1)) + map(g2,
				clm(sqrt(r2*m2)*z,i-1), slm( sqrt(r2*m2)*z, i-1 ), zit(z,i-1))


# sets the first Neumann and Dirichlet eigenvalues for the standard
# Cantor measure, calculated in 'determination of eigenvalues'

lN1 = mpf(7.097431098141122)
lN2 = mpf(42.584586588846733)
lN3 = mpf(61.344203922701662)
lN4 = mpf(255.507519533080403)
lN5 = mpf(272.983570819147205)
lN6 = mpf(368.065223536209975)
lN7 = mpf(383.552883127693176)

lD1 = mpf(14.435240512053874)
lD2 = mpf(35.260238024277225)
lD3 = mpf(140.781053384556059)
lD4 = mpf(151.290616055019631)
lD5 = mpf(326.057328357753770)
lD6 = mpf(353.416920767557756)


# plots the first seven Neumann eigenfunctions (not normed)
# the points at x ('corner points') are joined by straight lines, 
# this is alright because these 'gap intervals' do not belong to the
# support of the measure \mu

list_plot(zip(x,clm(sqrt(lN1),it)),plotjoined=true,thickness=0.5)
list_plot(zip(x,clm(sqrt(lN2),it)),plotjoined=true,thickness=0.5)
list_plot(zip(x,clm(sqrt(lN3),it)),plotjoined=true,thickness=0.5)
list_plot(zip(x,clm(sqrt(lN4),it)),plotjoined=true,thickness=0.5)
list_plot(zip(x,clm(sqrt(lN5),it)),plotjoined=true,thickness=0.5)
list_plot(zip(x,clm(sqrt(lN6),it)),plotjoined=true,thickness=0.5)
list_plot(zip(x,clm(sqrt(lN7),it)),plotjoined=true,thickness=0.5)


# plots the first six Dirichlet eigenfunctions (not normed)

list_plot(zip(x,slm(sqrt(lD1),it)),plotjoined=true,thickness=0.5)
list_plot(zip(x,slm(sqrt(lD2),it)),plotjoined=true,thickness=0.5)
list_plot(zip(x,slm(sqrt(lD3),it)),plotjoined=true,thickness=0.5)
list_plot(zip(x,slm(sqrt(lD4),it)),plotjoined=true,thickness=0.5)
list_plot(zip(x,slm(sqrt(lD5),it)),plotjoined=true,thickness=0.5)
list_plot(zip(x,slm(sqrt(lD6),it)),plotjoined=true,thickness=0.5)


# we compute the sup-norm of the eigenfunctions

max(clm(sqrt(lN1),it))
max(clm(sqrt(lN2),it))

max(slm(sqrt(lD1),it))
max(slm(sqrt(lD2),it))
\end{verbatim}

\paragraph{Program 4} This computes the $L2(\mu)$-norms of the Neumann eigenfunctions 
\[ c_{\lambda,\mu}(\sqrt{\l_{N,k}}, \cdot) \]
 and  Dirichlet eigenfunctions
\[ s_{\lambda,\mu}(\sqrt{\l_{D,k}}, \cdot) \]
 with the formulas from Corollary \ref{cor:l2norm}.

\begin{verbatim}
from mpmath import mp, mpf, findroot
mp.dps = 80
mp.pretty = True

r1=mpf(1/3)
r2=mpf(1/3)
m1=mpf(1/2)
m2=mpf(1/2)

an = 50;

p=[1,1]
q=[1,1]

for n in range(1,an):

    #p[2n]
    p.append(1/(1-(r1*m1)^n-(r2*m2)^n)*(sum([(r1*m1)^i * (r2*m2)^(n-i)
* p[2*i] * p[2*n-2*i] for i in range(1,n)]) + sum([r1^i * m1^(i+1)
* r2^(n-i) * m2^(n-i-1) * p[2*i+1] * q[2*n-2*i-1] for i in range(0,n)])
+ (1-r1-r2)* sum([r1^i * m1^(i+1) * (r2 * m2)^(n-i-1) * p[2*i+1]
* p[2*n-2*i-2] for i in range(0,n)])))

    #q[2n]
    q.append(1/(1-(r1*m1)^n-(r2*m2)^n)*(sum([(r1*m1)^i * (r2*m2)^(n-i)
* q[2*i] * q[2*n-2*i] for i in range(1,n)]) + sum([r1^(i+1) * m1^i
* r2^(n-i-1) * m2^(n-i) * q[2*i+1] * p[2*n-2*i-1] for i in range(0,n)])
+ (1-r1-r2)* sum([(r1 * m1)^i * r2^(n-i-1) * m2^(n-i) * q[2*i]
* p[2*n-2*i-1] for i in range(0,n)])))

    #p[2n+1]
    p.append(1/(1-r1^n*m1^(n+1)-r2^n*m2^(n+1))*(sum([r1^i*m1^(i+1)
* (r2*m2)^(n-i) * p[2*i+1] * q[2*n-2*i] for i in range(0,n)])
+ sum([r1^i * m1^i * r2^(n-i) * m2^(n-i+1) * p[2*i] * p[2*n-2*i+1]
for i in range(1,n+1)]) + (1-r1-r2)* sum([r1^i * m1^(i+1) * r2^(n-i-1)
* m2^(n-i) * p[2*i+1] * p[2*n-2*i-1] for i in range(0,n)])))

    #q[2n+1]
    q.append(1/(1-r1^(n+1)*m1^n-r2^(n+1)*m2^n)*(sum([r1^(i+1)*m1^i
* (r2*m2)^(n-i) * q[2*i+1] * p[2*n-2*i] for i in range(0,n)])
+ sum([r1^i * m1^i * r2^(n-i+1) * m2^(n-i) * q[2*i] * q[2*n-2*i+1]
for i in range(1,n+1)]) + (1-r1-r2)* sum([r1^i * m1^i * (r2 * m2)^(n-i)
* q[2*i] * p[2*n-2*i] for i in range(0,n+1)])))

# sets the first Neumann and Dirichlet eigenvalues for the standard 
# Cantor measure, calculated in 'determination of eigenvalues'

lN1 = mpf(7.097431098141122)
lN2 = mpf(42.584586588846733)
lN3 = mpf(61.344203922701662)
lN4 = mpf(255.507519533080403)
lN5 = mpf(272.983570819147205)
lN6 = mpf(368.065223536209975)
lN7 = mpf(383.552883127693176)

lD1 = mpf(14.435240512053874)
lD2 = mpf(35.260238024277225)
lD3 = mpf(140.781053384556059)
lD4 = mpf(151.290616055019631)
lD5 = mpf(326.057328357753770)
lD6 = mpf(353.416920767557756)


# L2(\mu)-norms of the Neumann eigenfunctions 
# c_{\lambda,\mu}(\sqrt{lNk}, \cdot) for k=1, ..., 7
# with Corollary 4.3

sqrt(sum([(-1)^n * lN1^n * sum([(n+1-2*k)*p[2*k]*p[2*n+1-2*k] 
for k in range(0,n+1) ]) for n in range(0,an) ]))
sqrt(sum([(-1)^n * lN2^n * sum([(n+1-2*k)*p[2*k]*p[2*n+1-2*k] 
for k in range(0,n+1) ]) for n in range(0,an) ]))
sqrt(sum([(-1)^n * lN3^n * sum([(n+1-2*k)*p[2*k]*p[2*n+1-2*k] 
for k in range(0,n+1) ]) for n in range(0,an) ]))
sqrt(sum([(-1)^n * lN4^n * sum([(n+1-2*k)*p[2*k]*p[2*n+1-2*k] 
for k in range(0,n+1) ]) for n in range(0,an) ]))
sqrt(sum([(-1)^n * lN5^n * sum([(n+1-2*k)*p[2*k]*p[2*n+1-2*k] 
for k in range(0,n+1) ]) for n in range(0,an) ]))
sqrt(sum([(-1)^n * lN6^n * sum([(n+1-2*k)*p[2*k]*p[2*n+1-2*k] 
for k in range(0,n+1) ]) for n in range(0,an) ]))
sqrt(sum([(-1)^n * lN7^n * sum([(n+1-2*k)*p[2*k]*p[2*n+1-2*k] 
for k in range(0,n+1) ]) for n in range(0,an) ]))


# L2(\mu)-norms of the Dirichlet eigenfunctions
# s_{\lambda,\mu}(\sqrt{lDk}, \cdot) for k = 1, ..., 6

sqrt(sum([(-1)^n * lD1^(n+1) * sum([(n+1-2*k)*q[2*k+1]*q[2*n+2-2*k] 
for k in range(0,n+2) ]) for n in range(0,an-1) ]))
sqrt(sum([(-1)^n * lD2^(n+1) * sum([(n+1-2*k)*q[2*k+1]*q[2*n+2-2*k] 
for k in range(0,n+2) ]) for n in range(0,an-1) ]))
sqrt(sum([(-1)^n * lD3^(n+1) * sum([(n+1-2*k)*q[2*k+1]*q[2*n+2-2*k] 
for k in range(0,n+2) ]) for n in range(0,an-1) ]))
sqrt(sum([(-1)^n * lD4^(n+1) * sum([(n+1-2*k)*q[2*k+1]*q[2*n+2-2*k] 
for k in range(0,n+2) ]) for n in range(0,an-1) ]))
sqrt(sum([(-1)^n * lD5^(n+1) * sum([(n+1-2*k)*q[2*k+1]*q[2*n+2-2*k] 
for k in range(0,n+2) ]) for n in range(0,an-1) ]))
sqrt(sum([(-1)^n * lD6^(n+1) * sum([(n+1-2*k)*q[2*k+1]*q[2*n+2-2*k] 
for k in range(0,n+2) ]) for n in range(0,an-1) ]))
\end{verbatim}

\end{document}